\documentclass[reqno,10pt]{amsart}
\usepackage{amssymb,amsmath,amscd, amsthm,stmaryrd,mathrsfs,tikz,color, amsfonts,  esvect}
\usepackage[dvipsnames]{xcolor}
 \usepackage{autobreak} 
\usepackage{enumitem}
\setenumerate[1]{label=(\arabic*), font=\normalfont} 
\setenumerate[2]{label=(\alph*), font=\normalfont}   
\usepackage{braket}
\usetikzlibrary{patterns}
\usetikzlibrary{er}

\usepackage{genyoungtabtikz}

\usepackage[colorlinks,linkcolor=blue,urlcolor=cyan,citecolor=blue,pagebackref]{hyperref}

\numberwithin{equation}{section}
\allowdisplaybreaks

\usepackage[a4paper]{geometry}

\usepackage{ 
   amsmath}
\usepackage{ 
   autobreak}

\usepackage{tikz-cd}

\newcommand{\parti}{\mathcal P}

\newcommand{\Ree}{\mathrm{Re}}

\theoremstyle{plain}
\newtheorem{Thm}{Theorem}[section]
\newtheorem{Prop}[Thm]{Proposition}
\newtheorem{Lem}[Thm]{Lemma}
\newtheorem{Cor}[Thm]{Corollary}

\newtheorem*{thm*}{Theorem}
\theoremstyle{definition}
\newtheorem{Defff}[Thm]{Definition}
\newtheorem{ex}[Thm]{Example}
\newtheorem{Rem}[Thm]{Remark}

\newcommand{\lam}{\lambda}

\newcommand{\al}{\alpha}

\newcommand{\ep}{\varepsilon}

\newcommand{\Ann}{\mathrm{Ann}}
\newcommand{\eq}{\begin{equation}}
\newcommand{\en}{\end{equation}}
\newcommand{\eeqna}{\end{eqnarray}}
\newcommand{\beqn}[1]{\begin{equation}\label{#1}}
\newcommand{\eeqn}{\end{equation}}

\newcommand{\mc}[1]{\mathcal{#1}}

\renewcommand{\subset}{\subseteq}
\newcommand{\rar}{\rightarrow}
\newcommand{\bil}[2]{\langle{#1},{#2}^{\vee} \rangle }
\newcommand{\hs}{ \mathfrak{h}^*}

\newcommand{\ev}{^{\mathrm{ev}}}
\newcommand{\od}{^{\mathrm{odd}}}

\def\fhh{\mathfrak{h}}
\def\fgg{\mathfrak{g}}

\def\bC{\mathbb{C}}
\def\bZ{\mathbb{Z}}

\usepackage[centertableaux]{ytableau}
\colorlet{srcol}{black!15}

\makeatletter
\pgfkeys{/ytableau/options,
  noframe/.default = false,
  noframe/.is choice,
  noframe/true/.code = {%
    \global\let\vrule@YT=\vrule@none@YT
    \global\let\hrule@YT=\hrule@none@YT
  },
  noframe/false/.code = {%
    \global\let\vrule@YT=\vrule@normal@YT
    \global\let\hrule@YT=\hrule@normal@YT
  },
  noframe/on/.style = {noframe/true},
  noframe/off/.style = {noframe/false},
}
\makeatother
\ytableausetup{noframe=off,boxsize=1.7em}
\let\ytb=\ytableaushort

\newcommand{\tytb}[1]{{\tiny\ytb{#1}}}

\makeatletter

\newcommand{\trivial}[2][]{\if\relax\detokenize{#1}\relax{\color{red}\vspace{0em} $[$  #2 $]$}\else
\ifx#1h
\ifcsname showtrivial\endcsname{\color{orange} \vspace{0em}  $[$ #2 $]$}\fi\else{\colorWrong argument!}\fi\fi\ignorespaces}

\def\bfdd{{\mathbf{d}}}

\def\fhh{\mathfrak{h}}
\def\fgg{\mathfrak{g}}

\def\bfdd{\mathbf{d}}
\def\bfff{\mathbf{f}}

\def\bfpp{\mathbf{p}}

\def\csqcup{\stackrel{c}{\sqcup}}

\def\cupcol{{{\stackrel{\smash{c}}{\sqcup}\,}}}

\begin{document}
	
 \title[Annihilator variety]{A new proof for the partition algorithm of the  annihilator varieties of  highest weight modules}
 
 \author{Zhanqiang Bai}
\address[Bai]{School of Mathematical Sciences, Soochow University, Suzhou 215006, China}
\email{zqbai@suda.edu.cn}

\author{Jing Jiang}
\address[Jiang]{
School of Mathematical Sciences, East China Normal University, Shanghai 200241, China} 
\email{jjsd6514@163.com}

\author {Yongzhi Luan}
\address[Luan]{School of Mathematics, Shandong University, Jinan 250100, P. R. China; Shenzhen Research Institute of Shandong University, Shenzhen 518057, P. R. China}
\email{luanyongzhi@email.sdu.edu.cn}

\subjclass[2020]{Primary 22E47; Secondary 17B10, 17B08}

\keywords{Associated variety, nilpotent orbit, Young tableau, Springer correspondence, Robinson--Schensted insertion algorithm}

\begin{abstract}
	
Let $L(\lambda)$ be a simple highest weight module of a classical Lie algebra  $\mathfrak{g}$ with  highest weight $\lambda-\rho$, where $\rho$ is half the sum of positive roots.
Joseph proved that the associated variety of the annihilator ideal of $L(\lambda)$ (also called the annihilator variety) is the Zariski closure of a nilpotent orbit in $\mathfrak{g}^*$. Recently, Bai--Ma--Wang introduced a partition algorithm to describe this corresponding nilpotent orbit for a given highest weight module $L(\lambda)$.
In this paper, we present a new direct proof of Bai--Ma--Wang's partition algorithm using Sommers duality.
		
	\end{abstract}

 \maketitle

	 \tableofcontents
	\section{Introduction}
	
Let $\mathfrak{g}$ be a simple complex Lie algebra and $\mathfrak{h}$ be a Cartan subalgebra. One of the classical problems about highest weight modules of $\mathfrak{g}$ is to compute their
associated varieties of annihilator ideals. For a highest weight 
 $U(\mathfrak{g})$-module $M$, let $I(M) = \Ann(M)$ be its annihilator 
ideal in $U(\mathfrak{g})$, and let $J(M)$ be the corresponding graded ideal in 
$S(\mathfrak{g}) = \text{gr}(U(\mathfrak{g}))$. The zero set of $J(M)$ in the 
dual vector space $\mathfrak{g}^*$ of $\mathfrak{g}$ is known as \textit{the 
annihilator variety} of $M$, also referred to as \textit{the associated 
variety} of $I(M)$. This is usually denoted by $V(\Ann(M))$.
Borho--Brylinski \cite{BoB1} demonstrated that the annihilator variety of a 
highest weight module with an integral infinitesimal character is irreducible, i.e., it is equal to the Zariski closure of a nilpotent orbit in $\mathfrak{g}^*$ via the Springer correspondence.
Joseph \cite{Jo85} extended this result to the general infinitesimal character case by utilizing the truncated induction ($j$-induction) for representations of Weyl groups.

The study of  annihilator varieties of highest weight module is closely related to many research fields.
Duflo \cite{Du} proved that a primitive ideal (the annihilator ideal of an irreducible representation of $\mathfrak{g}$) of  $U(\mathfrak{g})$ can be realized as the annihilator ideal of a highest weight $U(\mathfrak{g})$-module. Barbasch--Vogan \cite{BarV82} established a one-to-one correspondence among the two-sided cells of the Weyl group $W$, special representations of $W$, and the special nilpotent orbits of $\mathfrak{g}^{*}$; the closures of the corresponding special nilpotent orbits occur as annihilator varieties of highest weight modules with trivial infinitesimal character. 
Joseph \cite{Jo78} found that the dimension of some irreducible module of a finite $W$-algebra is equal to the Goldie rank polynomial of a highest weight $U(\mathfrak{g})$-module; these polynomials were used earlier by Joseph \cite{Jo85} to give a basis for the special representations of an integral Weyl group. 
Further examples and applications can be found in \cite{BoB1,BMSZ-ABCD,GS,GSK,GT,LY}.

The computation of this invariant is generally very complicated. 
For the trivial infinitesimal character this has been done in type $A$ by Joseph \cite{Jo-com}, in types $B$ and $C$ by Garfinkle \cite{Garfinkle3}, and in type $D$ by McGovern and Pietraho \cite{MP23}. Furthermore,  Bai--Ma--Wang \cite{BMW} gave a simple description of the nilpotent orbit associated with the annihilator variety of a highest weight module for classical Lie algebras by using the Robinson--Schensted insertion algorithm utilized in \cite{BaiX,BXX}.  For exceptional cases, Bai--Gao--Wang--Xie  \cite{BGWX} found that the annihilator variety of a highest weight module can be described by using Sommers duality \cite{Sommers}.

In this paper, we present a new proof of the partition algorithm found by Bai--Ma--Wang \cite{BMW} by using Sommers duality.

For  a totally ordered set $ \Gamma $, we  denote by $ \mathrm{Seq}_n (\Gamma)$ the set of sequences $ x=(x_1,x_2,\dots, x_n) $   of length $ n $ with entries $ x_i\in\Gamma $. In this paper, we usually take $\Gamma$ to be $\mathbb{Z}$ or a coset of $\mathbb{Z}$ in $\mathbb{C}$.
Recall that by applying the RS algorithm  to $x\in \mathrm{Seq}_n (\Gamma)$ as in \cite{BaiX}, we obtain a Young tableau $P(x)$.
Let $p(x)=[p_1,...,p_k]$ be the shape of the Young tableau $P(x)$, where $p_i$ is the number of boxes in the $i$-th row of the  Young tableau $P(x)$. Then $p(x)$ is a partition of $n$.  Recall that a partition ${\bf p}=[p_1,...,p_k]$ of $n$ corresponds to a Young diagram, which is an array of $n$ boxes having left-justified rows with the $i$-th row containing $p_i$ boxes for $1 \leq  i \leq k$.

    Now  we let $\fgg$ be a complex simple Lie algebra of type $B_n$, $C_n$, or $D_n$. We identify the dual $\fhh^*$ of the fixed Cartan subalgebra $\fhh$ with $\bC^n$ as usual. 
When $\lambda\in\hs$,  we write $\lambda=(\lambda_1,\dots,\lambda_n)=\sum\limits_{i=1}^n\lam_i\ep_i$, where $\lam_i\in \mathbb{C}$ and $\{\ep_i \mid 1\leq i\leq n\}$ is the canonical basis of the Euclidean space $\mathbb{R}^n$.  

    For $\lambda\in \fhh^*$, let	$[\lambda] $ be the collection of  maximal subsequences $x=(\lambda_{i_1},\lambda_{i_2},\dots, \lambda_{i_k})$ of  $ \lambda$ such that 
    \begin{itemize}
    \item $i_1 < i_2< \cdots < i_k$ and
    \item $\lambda_{i_l}-\lambda_{i_{l+1}} \in \bZ$ or $\lambda_{i_l} + \lambda_{i_{l+1}} \in \bZ$ for all $1\leq l<k$. 
    \end{itemize}
     We set $ [\lambda]_1 $ (resp. $ [\lambda]_2 $) to be the subset of $ [\lambda] $ consisting of sequences with  all entries belonging to $ \mathbb{Z} $ (resp. $ \frac12+\mathbb{Z} $).
    Note that there is at most one element in $[\lambda]_1 $ (resp. $[\lambda]_2 $); we denote it by  $(\lambda)_{(0)}$ (resp. $(\lambda)_{(\frac{1}{2})}$) if it exists. 
     
	We set 
 \[
 [\lambda]_{1,2}=[\lambda]_1\cup [\lambda]_2, \quad [\lambda]_3=[\lambda]\setminus[\lambda]_{1,2}. 	
 \]

	For $ x=(\lambda_{i_1}, \lambda_{i_2},\dots, \lambda_{i_r})\in[\lambda] $, we define
	$${x}^-=(\lambda_{i_1},\lambda_{i_2},\dots,\lambda_{i_{r-1}}, \lambda_{i_{r}},-\lambda_{i_{r}},-\lambda_{i_{r-1}},\dots,-\lambda_{i_{2}},-\lambda_{i_{1}}).$$


For $ x=(\lambda_{i_1}, \lambda_{i_2},\dots, \lambda_{i_r})\in[\lambda]_3 $, let $y=(\lambda_{j_1}, \lambda_{j_2},\dots, \lambda_{j_p}) $ be the maximal subsequence of $ x $ such that $ j_1=i_1 $ and the difference of any two entries of $ y$ is an integer, and let  $z= (\lambda_{k_1}, \lambda_{k_2},\dots, \lambda_{k_q}) $ be the subsequence obtained by deleting $ y$ from $ x $, which is possibly empty.
	Define
	$$  \tilde{x}=(\lambda_{j_1}, \lambda_{j_2},\dots, \lambda_{j_p}, -\lambda_{k_q}, -\lambda_{k_{q-1}},\dots ,-\lambda_{k_1}).  $$
	
	Note that for   $ x=(\lambda_{i_1}, \lambda_{i_2},\dots, \lambda_{i_r})\in[\lambda]_{1,2} $, we obtain a Young tableau $P(x^-)$ by using the RS  algorithm.	This tableau yields a partition  $p(x^-)$, which corresponds to a special  partition for a special nilpotent orbit via the Springer correspondence.

Similarly, for $(\lambda)_{(\frac{1}{2})}$, we obtain a partition $p((\lambda)_{(\frac{1}{2})}^-)$, which gives a special partition of type $D$. This special type $D$ partition yields a $C$-type metaplectic special partition by computing the dual of $D$-collapse of its dual partition. See \cite{BMSZ-typeC}.  The definition of the collapse of partitions will be given in \S \ref{orbits-classical}.

Let $X$  denote the type of a  Lie algebra, i.e., $X=B$, $C$, or $D$. Let $\bfdd\cup \bfff$	denote the partition consisting of all row parts of $\bfdd$ and $\bfff$. Let  $\bfdd\cupcol \bfff$ denote the partition given by $$\bfdd\cupcol\bfff = (\bfdd^t\cup \bfff^t)^t.$$

	\begin{Thm}[{\cite[Thm. 1.6]{BMW}}]\label{pa-suanfa}
		Suppose $\mathfrak{g} = \mathfrak{so}(2n+1, \mathbb{C})$, $\mathfrak{sp}(n, \mathbb{C})$, or $\mathfrak{so}(2n, \mathbb{C})$, $\lambda\in \mathfrak{h}^*$, and
		$[\lambda]=(\lambda)_{(0)} \cup (\lambda)_{(\frac{1}{2})}\cup [\lambda]_3$
		with $[\lambda]_3=\{{\lambda}_{{Y}_1},\dots,{\lambda}_{{Y}_m}\}$. Let
		\begin{enumerate}
			\item $\mathbf{p}_{0}$ be the $X$-type special partition associated with
			$(\lambda)_{(0)}$;
			\item ${\mathbf p}_{\frac{1}{2}}$ be the $C$-type special  partition (for type $B$) or $C$-type metaplectic special partition (for types $C$ and $D$) associated with
			$(\lambda)_{(\frac{1}{2})}$;
			\item $\mathbf{p}_{i}$ be the $A$-type partition associated with
			$\tilde{\lambda}_{Y_i}$.
		\end{enumerate}
		Let $\mathbf{p}_{\lambda}$ be the $X$-collapse of
 \[
\mathbf{p}_0  \overset{c}{\sqcup}  \mathbf{p}_{\frac{1}{2}}  \overset{c}{\sqcup} \left( \overset{c}{\sqcup_i}  2\mathbf{p}_i \right)
\]
		Then we have
		\[
		V(\Ann (L(\lambda)))=\overline{\mathcal{O}}_{\mathbf{p}_{\lambda}}.
		\]
 When $n$ is even and ${\mathcal{O}}_{\mathbf{p}_{\lambda}}$ is a very even orbit,
 its numeral is determined by the number $k(\lambda)$ of very even orbits with numeral II in the set of very even orbits of type $D$:    $\{\mathcal{O}_{\bfpp_{0}}, \mathcal{O}_ {{\bf p}_{\frac{1}{2}}},  \mathcal{O}_{2\bfpp_i}| 1\leq i\leq m  \}$. 
So $\mathcal{O}_{{\bf p}_{\lambda}}$ will be of type I if $k(\lambda)\equiv 0 
  ~(\mathrm{mod}~ {2})$ and of type II if $k(\lambda)\equiv 1 
  ~(\mathrm{mod}~ {2})$.
 \end{Thm}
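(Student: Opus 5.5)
Our approach is to start from Joseph's description of $V(\Ann(L(\lambda)))$ through truncated induction, and to recast it with Sommers duality, following the pattern used by Bai--Gao--Wang--Xie for the exceptional types. Let $W_{[\lambda]}$ be the integral Weyl group of $\lambda$, and let $\sigma_\lambda$ be the special $W_{[\lambda]}$-representation attached to $\lambda$ through the Robinson--Schensted data. Joseph's theorem gives
\[
V(\Ann(L(\lambda)))=\overline{\mathcal O},\qquad \text{Springer}(\mathcal O)=j_{W_{[\lambda]}}^{W}\bigl(\sigma_\lambda\otimes\mathrm{sgn}\bigr)\otimes\mathrm{sgn}
\]
(up to the usual sign conventions). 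For classical $\fgg$ the integral root system splits as
\[
W_{[\lambda]}\cong W(X_{|(\lambda)_{(0)}|})\times W(D_{|(\lambda)_{(\frac12)}|})\times\prod_i S_{|Y_i|}.
\]
Here the first factor is of the same type $X$, except that in type $C$ it is the $C$-factor. For type $B$ the half-integral part gives $D$, and for type $C$ it gives $D$ as well. Conjugating each $x\in[\lambda]_3$ by sign changes turns it into an integral-difference sequence. This is exactly why $\tilde x$ appears, and $\tilde x$ yields a type $A$ factor.

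Step 1 is to identify each factor's special representation. For the integral part, the RS tableau $P(x^-)$ of the symmetrized sequence gives Garfinkle's (type $B/C$) or McGovern--Pietraho's (type $D$) special partition $\mathbf p_0$. For each type $A$ factor, $P(\tilde\lambda_{Y_i})$ gives the partition $\mathbf p_i$ (Joseph, type $A$). For the $D$-factor we likewise get a special $D$-partition from $P((\lambda)_{(\frac12)}^-)$.

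Step 2 invokes Sommers duality. Sommers realizes $j_{W'}^W(\text{special}\otimes\mathrm{sgn})$ for a pseudo-Levi-type (or endoscopic) subgroup $W'$ as the orbit dual to a saturation. Concretely, $\mathcal O$ is obtained by taking the $X$-dual of each factor's orbit, $\mathcal O^\vee_{\mathbf p_0}$, $\mathcal O^\vee_{\text{$D$-part}}$ and $\mathcal O^\vee_{\mathbf p_i}$. One then includes these into the Langlands-dual-type Levi/pseudo-Levi, takes the saturation, and dualizes back. In partition language, the dual of a type $A$ factor $\mathfrak{gl}_k$ sitting in a classical algebra doubles its partition, giving $2\mathbf p_i$. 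Saturation from a pseudo-Levi $X\times D$ is the row union $\cup$ of dual partitions followed by a collapse. Dualizing back turns row unions of duals into column unions $\cupcol$, and this produces $\mathbf p_0\cupcol\mathbf p_{\frac12}\cupcol(\cupcol_i 2\mathbf p_i)$ followed by the $X$-collapse. The half-integral factor appears as the dual of the $D$-collapse of the dual of the special $D$-partition, i.e. $\mathbf p_{\frac12}$. This must be checked separately for $B$, where Langlands duality swaps $B$ and $C$, and for $C$, $D$, where the metaplectic convention enters.

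Step 3 handles the very even numerals. We track which of the two very even $D$-orbits each factor contributes under induction, and note that the numeral changes parity additively across column-union saturation. This gives the $k(\lambda)\bmod 2$ rule.

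The main obstacle is Step 2: justifying, with correct collapse and duality conventions, that the Sommers formula for $j$-induction from the non-Levi $W(X)\times W(D)\times\prod S_k$ becomes precisely the column-union-then-$X$-collapse recipe. The tricky point is the $\frac12$-factor in types $B$ versus $C/D$. We would first verify the key identity $d(\mathbf a\cup\mathbf b)$-collapse $=$ $X$-collapse of $d(\mathbf a)\cupcol d(\mathbf b)$ for special inputs. We would then check small-rank examples to fix the conventions.
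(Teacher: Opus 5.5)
The overall frame is the paper's: Joseph's $j$-induction rewritten through the Bai--Gao--Wang--Xie formula $\mathcal{O}_{\mathrm{Ann}(L(\lambda))}=d_{\rm Som}^{G^\vee}(L^\vee,\mathcal{O}_{L^\vee})$, with the factorwise special representations read off from RS tableaux. But the proposal stops where the proof starts. Step 2 is the entire argument, and the mechanism you describe for it is not Sommers duality.

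\textbf{Step 2 computes the wrong map.} ``Saturate the factorwise duals in $G^\vee$ and dualize back'' computes $d_{\rm BV}(G^\vee\cdot\mathcal{O}_{L^\vee})$. That is Sommers' map only for the trivial conjugacy class ($\mathbf{v}=\varnothing$, i.e.\ $L^\vee$ a Levi subgroup of $G^\vee$). For nontrivial classes the recipe of Theorem~\ref{recipe} treats $\mathbf{v}$ differently, and the answers can genuinely differ: a BV dual is always special, while annihilator orbits need not be. Take type $C_2$ with $\lambda=(\tfrac32,\tfrac12)$. Then $\mathbf{q}_2=[1^4]$ and $(\mathbf{q}_2^t)_D=[3,1]$, so $\mathbf{v}=[3,1]$ and $\mathbf{h}=[1]$. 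Sommers' recipe gives $(\mathbf{v}\cup\mathbf{h}^C)^t_C=[2,1,1]$, which is non-special and is what the theorem predicts. By contrast, $d_{\rm BV}$ of the saturation $[3,1,1]$ is $[2,2]$.

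What is missing has three parts.
\begin{enumerate}
\item The explicit $(\mathbf{v},\mathbf{h})$ parametrization, with $\mathbf{v}$ formed by the parts of odd multiplicity (each taken once) of the LS dual of the half-integral factor. This $\mathbf{v}$ is exempt from the operations $(\cdot)^B$, $(\cdot)^C$, $((\cdot)^t_D)^t$ applied to $\mathbf{h}$.
\item Achar's identities (Lemma~\ref{pm-col}), used to move transposes inside the collapses.
\item Collapse-interchange lemmas whose parity hypotheses come from the structure of $\mathbf{v}$ and $\mathbf{h}$:
\begin{enumerate}
\item $(\mathbf{a}\csqcup\mathbf{b})_X=(\mathbf{a}_X\csqcup\mathbf{b})_X$ when $\mathbf{b}$ has only even parts;
\item $(\mathbf{a}\csqcup\mathbf{b})^C=(\mathbf{a}^C\csqcup\mathbf{b})_C$ for $\mathbf{a}$ special of type $B$ and $\mathbf{b}$ even;
\item $(\mathbf{a}_B\csqcup\mathbf{v}^t)_B=(\mathbf{a}\csqcup\mathbf{v}^t)_B$, with $C$ and $D$ analogues, when $\mathbf{a}$ is a special partition $\csqcup$ a partition with even column multiplicities.
\end{enumerate}
These are Lemmas~\ref{lem:collapse-even-shift}, \ref{C-empty}, \ref{lem:2}, \ref{lem-C}.
\end{enumerate}
Only after these does $\mathbf{v}^t$ cancel against the $-\mathbf{v}^t$ hidden in $\mathbf{h}^t$.

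Your ``key identity'' does not substitute for this. With $d$ the transpose it is the triviality $(\mathbf{a}\cup\mathbf{b})^t=\mathbf{a}^t\csqcup\mathbf{b}^t$. The real issue is removing inner collapses, and that fails without the parity hypothesis. For example, $\mathbf{a}=[6,3]$ and $\mathbf{b}=[3,3,2,2,1,1]$ give $[7,7,3,1^4]\neq[9,5,3,1^4]$. The needed hypothesis is also not ``special inputs'': the partitions it must be applied to, such as $(\mathbf{q}^\vee_1\csqcup[1])_B\csqcup\mathbf{q}^\vee_2\csqcup(-\mathbf{v}^t)\csqcup(\csqcup_i 2\mathbf{p}_i)$, are not special.

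\textbf{Type $B$ setup error.} Since $\varepsilon_k^\vee=2\varepsilon_k$, the half-integral subsystem $\Phi_{(\frac12)}$ of $\mathfrak{so}(2n+1,\mathbb{C})$ is of type $B$, not $D$. So $W_{[\lambda]}\cong W(B)\times W(B)\times\prod_i S_{|Y_i|}$, and with a $D$ factor you would be $j$-inducing from the wrong subgroup. In $G^\vee$ this factor is of type $C$, and $\mathbf{p}_{\frac12}=\mathbf{q}_2^\vee=p_C(x_2^-)^s$ is a genuine $C$-special partition. It is not your ``dual of the $D$-collapse of the dual'' of a $D$-partition; that metaplectic construction is correct only in types $C$ and $D$. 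Correspondingly, $\mathbf{v}$ consists of the largest even parts of odd multiplicity in $(\mathbf{q}_2^\vee)^t$.
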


Originally, this partition algorithm was discovered via computer experiments, and a proof using $j$-inductions was given in \cite{BMW}.
In this paper, we provide a new proof of this partition algorithm by using Sommers duality.


	This paper is organized as follows. In \S \ref{subsec:gkh}, we give some necessary preliminaries on annihilator varieties of highest weight modules, Sommers duality, and nilpotent orbits. In \S \ref{som--duality}, we recall the computation of annihilator varieties of highest weight modules by using the Sommers duality for classical groups. In \S \ref{new-proof}, we give the new proof of the partition algorithm by using the Sommers duality.


\section{Notation and Preliminary Results}\label{subsec:gkh}
Let $\mathfrak{g}$ be a simple complex Lie algebra and $\mathfrak{h}$ be a Cartan subalgebra.
Let $\Phi^+\subset\Phi$ be the set of positive roots determined by a Borel subalgebra $\mathfrak{b}$ of $\mathfrak{g}$. This gives us the Cartan decomposition $\mathfrak{g}=\mathfrak{n}\oplus\mathfrak{h}\oplus\mathfrak{n}^-$. Denote by $\Delta$ the set of simple roots in $\Phi^+$.
Any subset $I\subset \Delta$ generates a subsystem $\Phi_I\subset\Phi$.
Let $\mathfrak{p}_I$ be the standard parabolic subalgebra corresponding to $I$ with Levi decomposition $\mathfrak{p}_I=\mathfrak{ l}_I\oplus\mathfrak{u}_I$. We frequently drop the subscript $I$ if there is no confusion.
	
Let $F(\lambda)$ be a finite-dimensional irreducible $\mathfrak{l}$-module with highest weight $\lambda -\rho\in\mathfrak{h}^*$, where $\rho$ is half the sum of positive roots in $\Phi^+$. It can also be viewed as a	$\mathfrak{p}$-module with a trivial $\mathfrak{u}$-action. The {\it generalized Verma module} $N_I(\lambda)$ is defined by
\[
	N_I(\lambda):=U(\mathfrak{g})\otimes_{U(\mathfrak{p})}F(\lambda).
\]
The simple quotient of $N_I(\lambda)$ is denoted by $L(\lambda)$, which is a highest weight module with highest weight $\lambda-\rho$.

\subsection{Integral Weyl group}\label{integral2}
	Let $\langle-, -\rangle: \mathfrak{h} \times \mathfrak{h}^* \to \mathbb{C}$ be the canonical pairing. For $\lambda\in\mathfrak{h}^*$, define 
\begin{equation*}
\Phi_{[\lambda]}:=\{\alpha\in\Phi\mid\langle\lambda, \alpha^\vee\rangle\in\mathbb{Z}\},
\end{equation*}
where $ \alpha^\vee$ is the coroot associated with the root $\alpha \in \Phi$. Set
	\[
	W_{[\lambda]}:=\{w\in W\mid w\lambda-\lambda\in \mathbb{Z}\Phi\}.
	\]
	Then $\Phi_{[\lambda]}$ is a root system with Weyl group $W_{[\lambda]}$
	(e.g., \cite{Hum08}).
	Let $\Delta_{[\lambda]}$ be the simple system of $\Phi_{[\lambda]}$.  Set $J=\{\alpha\in\Delta_{[\lambda]}\mid\langle\lambda, \alpha^\vee\rangle=0\}$. Denote by $W_J$ the Weyl group generated by the reflections $s_\alpha$ with $\alpha\in J$. Let $\ell_{[\lambda]}$ be the length function on $W_{[\lambda]}$. Thus $\ell_{[\lambda]}=\ell$ when $\lambda$ is integral. Put
	\begin{equation*}\label{ceq1}
	W_{[\lambda]}^J:=\{w\in W_{[\lambda]}\mid \ell_{[\lambda]}(ws_\alpha)=\ell_{[\lambda]}(w)+1\ \mbox{for all}\ \alpha\in J\}.
	\end{equation*}
	Thus $W_{[\lambda]}^J$ consists of the shortest representatives of the cosets $wW_J$ with $ w\in W_{[\lambda]} $. When $\lambda$ is integral, we simply write $W^J:=W_{[\lambda]}^J$ .
	
	A weight $ \mu\in\hs $ is called \textit{anti-dominant} if $ \bil{\mu}{\al} \notin\mathbb{Z}_{>0}$ for all $ \al\in\Phi^+ $. For any $\lambda\in\mathfrak{h}^*$, there exists a unique anti-dominant weight $\mu\in\hs$ and a unique $w\in W_{[\mu]}^J$ such that $\lambda=w\mu$.
\begin{Prop}[{\cite[Prop. 3.5]{Hum08}}]\label{anti}
    Let $\lambda\in \mathfrak{h}^*$, with corresponding root system $\Phi_{[\lambda]}$ and Weyl group $W_{[\lambda]}$. Let $\Delta_{[\lambda]}$ be the simple system of $\Phi_{[\lambda]} \cap \Phi^+$  in $\Phi_{[\lambda]}$.  Then $\lambda$ is antidominant if and only if one of the
following three equivalent conditions holds:
\begin{enumerate}
    \item $\langle\lambda, \alpha^\vee\rangle\leq 0$ for all $\alpha \in \Delta_{[\lambda]}$.
    \item $\lambda\leq s_{\alpha}\lambda$ for all $\alpha \in \Delta_{[\lambda]}$.
    \item  $\lambda\leq w\lambda$ for all $w \in W_{[\lambda]}$.
\end{enumerate}
Therefore there is a unique antidominant weight in the orbit $W_{[\lambda]}\lambda$.
\end{Prop}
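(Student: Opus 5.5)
The plan is to reduce everything to the integral root system $\Phi_{[\lambda]}$ and then run the standard length-induction argument in its Weyl group $W_{[\lambda]}$. That $\Phi_{[\lambda]}$ is a root system with Weyl group $W_{[\lambda]}$ is recalled in \S\ref{integral2}. I take $\Phi_{[\lambda]}^+:=\Phi_{[\lambda]}\cap\Phi^+$ as its positive system, with simple system $\Delta_{[\lambda]}$. I read $\le$ as the usual partial order: $\mu\le\nu$ if and only if $\nu-\mu$ is a nonnegative integer combination of positive roots.

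\emph{Antidominance is equivalent to (1).} If $\alpha\in\Phi^+\setminus\Phi_{[\lambda]}$, then $\langle\lambda,\alpha^\vee\rangle\notin\mathbb{Z}$, so this pairing is automatically not in $\mathbb{Z}_{>0}$. Hence antidominance is equivalent to
\[
\langle\lambda,\alpha^\vee\rangle\le 0 \quad\text{for all }\alpha\in\Phi_{[\lambda]}^+,
\]
since these pairings are integers. Next, the coroots of $\Phi_{[\lambda]}$ form a root system whose positive coroots are nonnegative integer combinations of the simple coroots $\{\alpha^\vee : \alpha\in\Delta_{[\lambda]}\}$. Therefore the condition over all of $\Phi_{[\lambda]}^+$ is equivalent to the condition over $\Delta_{[\lambda]}$, which is (1).

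\emph{(1)$\Leftrightarrow$(2).} We have $s_\alpha\lambda-\lambda=-\langle\lambda,\alpha^\vee\rangle\alpha$, and $\langle\lambda,\alpha^\vee\rangle\in\mathbb{Z}$ for $\alpha\in\Delta_{[\lambda]}$. So $\lambda\le s_\alpha\lambda$ holds exactly when $\langle\lambda,\alpha^\vee\rangle\le 0$.

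\emph{(3)$\Rightarrow$(2).} This is immediate, since each $s_\alpha$ with $\alpha\in\Delta_{[\lambda]}$ lies in $W_{[\lambda]}$.

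\emph{(1)$\Rightarrow$(3).} This is the main step, and I prove it by induction on $\ell_{[\lambda]}(w)$. The case $w=1$ is trivial. If $\ell_{[\lambda]}(w)>0$, choose $\alpha\in\Delta_{[\lambda]}$ with $\ell_{[\lambda]}(ws_\alpha)<\ell_{[\lambda]}(w)$, and set $w'=ws_\alpha$. Then $w=w's_\alpha$, and the standard Coxeter-group fact $\ell(w's_\alpha)>\ell(w')\Rightarrow w'\alpha>0$, applied in $W_{[\lambda]}$, gives $w'\alpha\in\Phi_{[\lambda]}^+\subset\Phi^+$. Now
\[
w\lambda-\lambda=(w'\lambda-\lambda)-\langle\lambda,\alpha^\vee\rangle\, w'\alpha .
\]
The first term is $\ge 0$ by induction. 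The second is a nonnegative integer multiple of a positive root, by (1). Hence $\lambda\le w\lambda$.

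The only point needing care is that the length function, the simple reflections and the exchange property are all taken in the Coxeter group $W_{[\lambda]}$ relative to $\Delta_{[\lambda]}$, not in $W$. This is legitimate because $W_{[\lambda]}$ is the Weyl group of the root system $\Phi_{[\lambda]}$. I expect this bookkeeping to be the main obstacle, since $\Delta_{[\lambda]}$ need not be contained in $\Delta$.

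\emph{Uniqueness.} Every $W_{[\lambda]}$-orbit contains an element satisfying (1): take an element of the orbit that is minimal for $\le$ in the finite orbit, and it satisfies (2). If $\lambda$ and $\mu=w\lambda$ are both antidominant, note that $W_{[\mu]}=W_{[\lambda]}$ because $w$ preserves $\Phi_{[\lambda]}$. Applying (3) to both gives $\lambda\le\mu$ and $\mu\le\lambda$, so $\lambda=\mu$.
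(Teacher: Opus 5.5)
Your proof is correct and follows the standard argument. The paper gives no proof of its own, only the citation \cite[Prop.~3.5]{Hum08}, and Humphreys' proof runs through the same steps: integrality off $\Phi_{[\lambda]}$ together with the dual simple system $\Delta_{[\lambda]}^\vee$ gives antidominance $\Leftrightarrow$ (1); the formula $s_\alpha\lambda-\lambda=-\langle\lambda,\alpha^\vee\rangle\alpha$ gives (1)$\Leftrightarrow$(2); and induction on $\ell_{[\lambda]}$ in $W_{[\lambda]}$ gives (1)$\Rightarrow$(3). Humphreys states it for the dot action, which your linear version matches under the paper's convention that $L(\lambda)$ has highest weight $\lambda-\rho$.
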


 \subsection{Annihilator varieties of highest weight modules}
	
	Let $M$ be a finitely generated $U(\mathfrak{g})$-module. Fix a finite-dimensional generating subspace $M_0$ of $M$. Let $U_{n}(\mathfrak{g})$ be the standard filtration of $U(\mathfrak{g})$. Set $M_n=U_n(\mathfrak{g})\cdot M_0$ for $n \geq 0$ and put $M_{-1}=0$. Then
	\(
	\text{gr} (M)=\bigoplus\limits_{n=0}^{\infty} \text{gr}_n M,
	\)
	where $\text{gr}_n M=M_n/{M_{n-1}}$. Thus $\text{gr}(M)$ is a graded module over $\text{gr}(U(\mathfrak{g}))\simeq S(\mathfrak{g})$.
	We use $\Ann (M)$ to denote the two-sided ideal of $U(\mathfrak{g})$ consisting of elements that act by zero on $M$.

	\begin{Defff}
		The  \textit{associated variety} of $M$ is defined by
		\begin{equation*}
		V(M):=\{X\in \mathfrak{g}^* \mid f(X)=0 \text{ for all~} f\in \operatorname{Ann}_{S(\mathfrak{g})}(\operatorname{gr} M)\}.
		\end{equation*}
	\end{Defff}
	
	The above  definition is independent of the choice of $M_0$ (e.g., \cite{NOT}).
	
	\begin{Defff} Let $\mathfrak{g}$ be a finite-dimensional semisimple Lie algebra. Let $I$ be a two-sided ideal in $U(\mathfrak{g})$. Then $\text{gr}(U(\mathfrak{g})/I)\simeq S(\mathfrak{g})/\text{gr}I$ is a graded $S(\mathfrak{g})$-module. Its annihilator is $\text{gr}I$. We define its associated variety by
		$$V(I):=V(U(\mathfrak{g})/I)=\{X\in \mathfrak{g}^* \mid p(X)=0\ \mbox{for all $p\in {\text{gr}}I$}\}.
		$$
	\end{Defff}
	
	Following \cite{GSK}, $V(\Ann (M))$ is called the \textit{annihilator variety} of the $U(\mathfrak{g})$-module $M$.

	Recall that the Weyl group $ W  $ is a Coxeter group generated by $ S=\{s_\al\mid\al\in\Delta \} $. Given an indeterminate $q$, the Hecke algebra $ \mc{H} $ over $ \mathcal{A} :=\mathbb{Z}[q,q^{-1}]$ is generated by $ T_w $, $ w\in W $ with relations \[
T_wT_{w'}=T_{ww'} \text{ if }\ell(ww')=\ell(w)+\ell(w'),
\]
\[
\text{and }(T_s+q^{-1})(T_s-q)=0 \text{ for any }s\in S.
\]
For each $w \in W$, the unique element $ C_w $ such that
\[
\overline{C_w}=C_w,\qquad C_w\equiv T_w \mod{\mc{H}_{<0}}
\]
forms, as $w$ varies, the \textit{Kazhdan--Lusztig} (KL) \textit{basis} of $ \mc{H} $, where $ \bar{\,} :\mc{H}\rar\mc{H}$ is the bar involution such that $ \bar{q}=q^{-1} $, $ \overline{T_w} =T_{w^{-1}}^{-1}$, and $ \mc{H}_{<0}=\bigoplus_{w\in W}\mathcal{A}_{<0}T_w $ with $ \mathcal{A}_{<0}=q^{-1}\mathbb{Z}[q^{-1}] $.

If $ C_y $ occurs in the expansion of $ hC_w $ (resp. $C_wh$) with respect to the KL basis for some $ h\in\mc{H} $, then we write $ y\leftarrow_L w $ (resp. $ y\leftarrow_R w $). Extend $ \leftarrow_L $ (resp. $ \leftarrow_R $) to a preorder $ \leq_L $ (resp. $\leq _R$) on $ W $. For $x, w\in W$, write $x \leq_{LR} w$ if there exists a sequence $x=w_1, \cdots, w_n=w$ such that, for every $1\leq i<n$, either $w_i\leq_L w_{i+1}$ or $w_i\leq_R w_{i+1}$. Let $\stackrel{L}{\sim}$, $\stackrel{R}{\sim}$, $\stackrel{LR}{\sim}$ be the equivalence relations associated with $\leq_L$, $\leq_R$, $\leq_{LR}$ (for example, $x\stackrel{L}{\sim}w$ if and only if $x\leq_L w$ and $w\leq_Lx$). The equivalence classes of $W$ under $\stackrel{L}{\sim}$, $\stackrel{R}{\sim}$, $\stackrel{LR}{\sim}$ are called \textit{left cells}, \textit{right cells} and \textit{two-sided cells} respectively. More details about cells can be found in \cite{KL}.

Let $G$ be a connected simple algebraic group over $\mathbb{C}$ with Lie algebra $\mathfrak{g}$. Denote by $\mathcal{N}(G)$ or $\mathcal{N}(\mathfrak{g})$ the set of nilpotent orbits in $\mathfrak{g}$. For $\mathcal{O} \in \mathcal{N}(G)$, we write $A_\mathcal{O}:=Z_{G}(x)/(Z_{G}(x))^o$ for the connected component group of the centralizer subgroup  of an element $x\in \mathcal{O}$. Consider
$$\mathcal{N}^{\rm en}(G):=\{(\mathcal{O}, \eta): \mathcal{O} \in \mathcal{N}(G), \eta \in {\rm Irr}(A_\mathcal{O}) \}.$$
The Springer correspondence gives an injective map
\begin{equation} \label{Spr}
  {\rm Spr}_G: {\rm Irr}(W(G)) \longrightarrow \mathcal{N}^{\rm en}(G)  
\end{equation}
denoted by ${\rm Spr}_G(\sigma) = (\mathcal{O}_{\rm Spr}^G(\sigma), \eta_\sigma)$. 
For every orbit $\mathcal{O} \in \mathcal{N}(G)$, the pair $(\mathcal{O}, \mathbf{1})$ always lies in the image of ${\rm Spr}_G$, and thus we have a well-defined map
$${\rm Spr}_{G, \mathbf{1}}^{-1}: \mathcal{N}(G) \longrightarrow {\rm Irr}(W(G))$$
given by ${\rm Spr}_{G,\mathbf{1}}^{-1}(\mathcal{O}):= {\rm Spr}_G^{-1}(\mathcal{O}, \mathbf{1})$.  If $\sigma \in {\rm Irr}(W(G))$ is a special representation, then one has ${\rm Spr}_G(\sigma) = (\mathcal{O}_{\rm Spr}^G(\sigma), \mathbf{1})$, and we call $\mathcal{O}_{\rm Spr}^G(\sigma)$ a special nilpotent orbit.

In fact, it can be seen that ${\rm Spr}_G$ actually depends only on the Lie algebra $\mathfrak{g}$, and thus we may write all the above using $\mathfrak{g}$ as
$${\rm Spr}_\mathfrak{g}:={\rm Spr}_G, \quad \mathcal{O}_{\rm Spr}^\mathfrak{g}:=\mathcal{O}_{\rm Spr}^G, \quad {\rm Spr}_{\mathfrak{g}, \mathbf{1}}^{-1}:={\rm Spr}_{G, \mathbf{1}}^{-1}.$$

Let $W$ be the Weyl group of $\mathfrak{g}$. 
We denote by $L_w$ the simple highest weight $\mathfrak{g}$-module with highest weight $-w\rho-\rho$, where $w\in W$. 
Set $I_w=\Ann(L_w)$. 
Then, according to Joseph~\cite{Jo79} and Vogan~\cite{Vo80}, $I_w=I_x$ if and only if $w\stackrel{L}{\sim}x$. 
Borho and Brylinski~\cite{BoB1} proved that the annihilator variety of $L_w$ is irreducible; i.e., it is the closure of a single nilpotent orbit. 
We write $V(I_w)=V(\Ann(L_w))=\overline{\mathcal{O}}_w$.

From~\cite{BoB1} and~\cite{Ta}, there exists a bijection between the two-sided cells of the Weyl group $W$ and the special nilpotent orbits. 
In other words, $\mathcal{O}_w=\mathcal{O}_y$ if and only if $w\stackrel{LR}{\sim}y$. 

Let $\lambda\in\mathfrak{h}^*$ with associated root system $\Phi_{[\lambda]}$ and Weyl subgroup $W_{[\lambda]}$. 
Then $\lambda$ is an integral weight for $\Phi_{[\lambda]}$. 
Suppose that $\lambda=w_{\lambda}\mu$, where $\mu$ is antidominant and $w_{\lambda}\in W_{[\mu]}^J$. 
We denote by $\pi_{w_{\lambda}}$ the special representation of $W_{[\lambda]}$ corresponding to the two-sided cell of $W_{[\lambda]}$ containing $w_{\lambda}$.

Using the Springer correspondence and the $j$-induction operator, Joseph extended the result of~\cite{BoB1} to an arbitrary infinitesimal character, which gives the following result.

\begin{Prop}[{\cite[Thm.~3.10]{Jo85}}]\label{2dim}
Let $\mu\in\mathfrak{h}^*$ be an antidominant element and $w\in W_{[\mu]}^J$. 
Then $\tilde{\pi}_w:=j_{W_{[\mu]}}^W(\pi_w)$ is an irreducible $W$-module. 
Via the Springer correspondence, this $W$-module $\tilde{\pi}_w$ corresponds to a nilpotent orbit $\mathcal{O}_{\tilde{\pi}_w}$ with trivial local system. Furthermore,
$$V(\Ann(L(w\mu)))=\overline{\mathcal{O}_{\mathrm{Spr}}^{\mathfrak{g}}(\tilde{\pi}_w)}.$$
\end{Prop}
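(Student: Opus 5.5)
The plan is to prove the three assertions of Proposition~\ref{2dim} in the order stated, using the \emph{characteristic (multiplicity) polynomials} of highest weight modules. These polynomials connect the representation theory of the integral Weyl group $W_{[\mu]}$ with the geometry of nilpotent orbits in $\mathfrak{g}^*$.

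\textbf{Irreducibility.} Recall that $\Phi_{[\mu]}$ is a root subsystem, so $W_{[\mu]}$ is a reflection subgroup of $W$ acting on $S(\mathfrak{h})$. For an irreducible $W_{[\mu]}$-module $\sigma$, let $b_\sigma$ be the lowest degree in which $\sigma$ occurs in the $W_{[\mu]}$-harmonic polynomials. The Macdonald--Lusztig--Spaltenstein construction applies when $\sigma$ occurs in degree $b_\sigma$ with multiplicity one. In that case $j_{W_{[\mu]}}^W(\sigma)$ is the $W$-span of that unique copy, and it is an irreducible $W$-module of the same $b$-value. It therefore suffices to quote Lusztig's result that special representations have multiplicity one in their fake degree $b_\sigma$. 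This applies to $\sigma=\pi_w$, which is special by definition as the special representation attached to the two-sided cell of $w$.

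\textbf{Identification with the annihilator variety.} First, $V(\Ann L(w\mu))$ is the closure of a single orbit $\mathcal{O}$. This follows by Joseph's irreducibility theorem, which extends the Borho--Brylinski argument to nonintegral $\mu$ via the coherent family through $w\mu$ and translation to the integral root system $\Phi_{[\mu]}$.

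Next, take $\nu$ ranging over the translates $\mu+\Lambda$ in the coherent family. The leading term of the Hilbert polynomial of $\mathrm{gr}\,L(w\nu)$, evaluated along the components of $V(L(w\nu))\subset\mathcal{O}\cap\mathfrak{n}$, gives a polynomial $p_w$ on $\mathfrak{h}^*$. This polynomial is nonzero and homogeneous of degree $|\Phi^+|-\tfrac12\dim\mathcal{O}$.
\begin{enumerate}
\item Joseph's analysis of Goldie rank polynomials shows that, as $w$ varies in the left cell, the $p_w$ span the special module $\pi_w$ inside the $W_{[\mu]}$-harmonics. In particular $\deg p_w=b_{\pi_w}$.
\item On the other hand, the multiplicity polynomials of the orbital varieties $\mathcal{O}\cap\mathfrak{n}$ are the Joseph--Hotta polynomials. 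They span the Springer representation $\mathrm{Spr}^{-1}_{\mathfrak{g},\mathbf{1}}(\mathcal{O})$ inside the $W$-harmonics in degree $\dim\mathcal{B}_x=|\Phi^+|-\tfrac12\dim\mathcal{O}$. Here $\mathcal{B}_x$ is the Springer fiber of $x\in\mathcal{O}$, and this is exactly its top-degree homology, which carries the trivial local system.
\end{enumerate}
Since $p_w$ is a positive combination of orbital-variety polynomials, the $W$-module generated by $p_w$ lies in $\mathrm{Spr}^{-1}_{\mathfrak{g},\mathbf{1}}(\mathcal{O})$. The degrees agree, so this $W$-module is by construction the $j$-induced module $\tilde\pi_w$.

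Irreducibility of both sides then forces $\tilde\pi_w=\mathrm{Spr}^{-1}_{\mathfrak{g},\mathbf{1}}(\mathcal{O})$. This gives simultaneously that $\tilde\pi_w$ corresponds to an orbit with trivial local system, and that $\mathcal{O}=\mathcal{O}^{\mathfrak{g}}_{\rm Spr}(\tilde\pi_w)$, which yields $V(\Ann L(w\mu))=\overline{\mathcal{O}^{\mathfrak{g}}_{\rm Spr}(\tilde\pi_w)}$.

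\textbf{Main obstacle.} The hardest step is the degree and span matching in the identification step. Concretely, one must show that $p_w$ is nonzero and that it lies in the $W_{[\mu]}$-harmonics of degree exactly $b_{\pi_w}$. For this I would first establish that the Gelfand--Kirillov dimension of $L(w\mu)$ equals $|\Phi^+|-b_{\pi_w}$ via Lusztig's $a$-function on $W_{[\mu]}$, i.e.\ $a(w)=b_{\pi_w}$ for special $\pi_w$. I would then use the coherent-continuation action of $W_{[\mu]}$ on the $p_w$, which is where nonintegrality enters. Once that is in place, the remaining steps are formal.
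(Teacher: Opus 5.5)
The paper gives no proof of this proposition; it is imported from \cite[Thm.~3.10]{Jo85}. Your outline reproduces the architecture of Joseph's original argument, and your irreducibility paragraph is fine: special representations are univalent, so Macdonald--Lusztig--Spaltenstein induction from the reflection subgroup $W_{[\mu]}$ applies.

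The identification step, however, has a genuine gap. Your definition does not single out one polynomial, and the two properties you use belong to two different objects.
\begin{enumerate}
\item Property (2), being a positive combination $\sum_{\mathcal V} m_{\mathcal V}\,p_{\mathcal V}$ of Joseph--Hotta polynomials over the top-dimensional components of $V(L(w\mu))$, holds for the \emph{characteristic polynomial} $\tilde p_w$ of the single module $L(w\mu)$. This is the lowest-order term at $0\in\mathfrak h$ of the numerator of its formal character, equivalently the $T$-equivariant multiplicity of $\mathrm{gr}\,L(w\mu)$. No variation of $\nu$ is involved.
\item Property (1), generating the special module $\pi_w$, is what Joseph's Goldie rank theory (with the Barbasch--Vogan identification) gives for the \emph{Goldie rank polynomial} $\nu\mapsto \mathrm{rk}\,U(\mathfrak g)/\Ann L(w\nu)$. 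That is a function of the coherent-family parameter, and it is not a priori a combination of orbital-variety polynomials.
\end{enumerate}
The leading coefficient of the Hilbert polynomial of $\mathrm{gr}\,L(w\nu)$ is only a number, the Bernstein degree; letting $\nu$ vary produces yet another polynomial. What is missing is the theorem that $\tilde p_w$ itself generates, under the natural action of $W_{[\mu]}$, an irreducible module isomorphic to $\pi_w$. In Joseph's work this comes from comparing $\tilde p_w$ with the Goldie rank polynomial through their behaviour under coherent continuation. That comparison, not the degree count, is the heart of the proof.

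Also, the degree equality is not an ``in particular'': $\pi_w$ can occur in harmonic polynomials of degree above $b_{\pi_w}$. So $|\Phi^+|-\mathrm{GKdim}\,L(w\mu)=a(w)=b_{\pi_w}$ is a separate input, the one you correctly flag at the end. With both inputs established for $\tilde p_w$, your deduction $\mathbb C W\tilde p_w=j_{W_{[\mu]}}^W(\pi_w)$ goes through.

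Second, you should not begin by invoking irreducibility of $V(\Ann L(w\mu))$. For non-integral $\mu$ that is part of the very result of \cite{Jo85} you are proving. The justification you sketch (``translation to the integral root system'') is not an actual reduction, because $\Phi_{[\mu]}$ need not be the root system of a subalgebra of $\mathfrak g$. For $\mathfrak{sp}(4,\mathbb C)$ and $\mu=(\frac12,\frac12)$ one gets $\Phi_{[\mu]}=\{\pm\varepsilon_1\pm\varepsilon_2\}$, which is not closed in $\Phi$.

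The assumption is also unnecessary. Once $\mathbb C W\tilde p_w$ is known to be irreducible, write $\tilde p_w=\sum_{\mathcal V}m_{\mathcal V}p_{\mathcal V}$ with all $m_{\mathcal V}>0$. The Joseph polynomials of a fixed orbit are linearly independent, and those of distinct orbits of the same dimension span pairwise non-isomorphic Springer modules. Hence all top-dimensional components of $V(L(w\mu))$ lie in a single orbit $\mathcal O$, and $\mathbb C W\tilde p_w\cong \mathrm{Spr}^{-1}_{\mathfrak g,\mathbf 1}(\mathcal O)$. Irreducibility of the annihilator variety should come out of the argument, not go into it.
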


Henceforth, for a highest weight module $L(\lambda)$, Proposition~\ref{2dim} yields 
$$V(\Ann(L(\lambda)))=\overline{\mathcal{O}_{\Ann(L(\lambda))}}$$
for a nilpotent orbit $\mathcal{O}_{\Ann(L(\lambda))}$ that is uniquely determined by $L(\lambda)$. 
We call $\mathcal{O}_{\Ann(L(\lambda))}$ the associated nilpotent orbit, or annihilator orbit, of $L(\lambda)$.

\subsection{Robinson--Schensted insertion algorithm}\label{R-S} 
A generalized version of the  Robinson--Schensted insertion algorithm is presented in \cite{BaiX}, which we will utilize in our paper.
This RS algorithm or correspondence was  originally discovered by Robinson \cite{Rob38} and then independently rediscovered in a different form by Schensted \cite{Sc61}. 
 
Recall that we  denote by $ \mathrm{Seq}_n (\Gamma)$ the set of sequences $ x=(x_1,x_2,\dots, x_n) $   of length $ n $ with $ x_i\in\Gamma $, where $\Gamma$ is a totally ordered set. We obtain a Young tableau $P(x)$ by applying the following Robinson--Schensted insertion algorithm  to $x\in \mathrm{Seq}_n (\Gamma)$. 
 \begin{Defff}[Robinson--Schensted insertion algorithm]
For an element  $ x \in  \mathrm{Seq}_n (\Gamma)$, we write  $x=(x_1,\dots,x_n)$. We associate to $x $ a  Young tableau  $ P(x) $ as follows. Let $ P_0 $ be an empty Young tableau. Assume that we have constructed the Young tableau $ P_k $ associated to $ (x_1,\dots,x_k) $, $ 0\leq k<n $. Then $ P_{k+1} $ is obtained by adding $ x_{k+1} $ to $ P_k $ as follows. First, we add $ x_{k+1} $ to the first row of $ P_k $ by replacing the leftmost entry $ x_j $ in the first row which is \textit{strictly} bigger than $ x_{k+1} $.  (If there is no such entry $ x_j $, we just add a box with entry $x_{k+1}  $ to the right side of the first row, and end this process). Then add this $ x_j $ to the next row in the same way that we  added $x_{k+1} $ to the first row.  Finally, we put $P(x)=P_n$.

\end{Defff}

We use $p(x)=(p_1,\dots, p_k)$ to denote the shape of $P(x)$, where $p_i$ is the number of boxes in the $i$-th row of  $P(x)$.
When $|p(x)|=\sum\limits_{1\leq i\leq k} p_i=N$, $p(x)$ is a partition of $N$ and we still denote this partition by $p(x)=[p_1,\dots, p_k]$.

In general, the Robinson--Schensted insertion algorithm is abbreviated to the RS algorithm. 

\subsection{Nilpotent orbits of classical types}\label{orbits-classical}
Let $\fgg$ be a simple complex Lie algebra.
Recall that nilpotent orbits in $\fgg$ are parameterized by partitions (with additional labels in type $D$).

    The partition types of nilpotent orbits of classical types  are given in the following propositions.

\begin{Prop}[{\cite[Thm. 5.1.1 \& Thm. 6.3.2]{CM}}]\label{Orbit-A}
		Nilpotent orbits in $\mathfrak{sl}{(n,\mathbb{C})}$ are in one-to-one correspondence with the set of partitions of $n$. Every nilpotent orbit (corresponding to a partition ${\bf p}$) of type $A$ is special.
	\end{Prop}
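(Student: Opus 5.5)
The statement has two parts, and I would prove them separately: first the parametrization of orbits by partitions, then the assertion that every orbit is special.

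\emph{Parametrization.} I would reduce to the classification of nilpotent endomorphisms of $\mathbb{C}^n$ up to conjugacy, using Jordan normal form.
\begin{enumerate}
\item A nilpotent $X\in\mathfrak{sl}(n,\mathbb{C})$ has all eigenvalues $0$. Its Jordan form is therefore a block diagonal sum of nilpotent Jordan blocks of sizes $p_1\geq p_2\geq\cdots\geq p_k$ with $\sum p_i=n$.
\item Uniqueness of the Jordan form gives that $GL(n,\mathbb{C})$-conjugacy classes of nilpotent matrices correspond bijectively to partitions $[p_1,\dots,p_k]$ of $n$. Intrinsically, the partition is recovered from the ranks $\mathrm{rk}(X^j)$, since $\dim\ker X^{j}-\dim\ker X^{j-1}$ counts the blocks of size $\geq j$.
\item It remains to pass from $GL_n$-orbits to $SL_n$-orbits (equivalently, to the adjoint group). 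Since $GL_n=\mathbb{C}^\times\cdot SL_n$ and scalars act trivially under conjugation, the $GL_n$-orbits and $SL_n$-orbits on $\mathfrak{sl}(n,\mathbb{C})$ coincide.
\end{enumerate}

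\emph{Speciality.} I would use the characterization of special orbits through Lusztig--Spaltenstein (or Barbasch--Vogan) duality: an orbit is special exactly when it lies in the image of the duality map $d$. In type $A$ the duality is $d(\mathbf{p})=\mathbf{p}^t$, the transpose partition. Since $d(d(\mathbf{p}))=(\mathbf{p}^t)^t=\mathbf{p}$, the map $d$ is an involution on partitions of $n$. Hence every partition lies in its image, so every orbit is special.

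As a cross-check, I would verify the same conclusion on the Weyl group side.
\begin{enumerate}
\item For $W=S_n$, the Springer correspondence sends the irreducible representation $\sigma_{\mathbf{p}}$ to $(\mathcal{O}_{\mathbf{p}},\mathbf{1})$ (up to the standard transpose convention).
\item For the adjoint group the component groups $A_{\mathcal{O}}$ are trivial, so ${\rm Spr}$ is a bijection ${\rm Irr}(S_n)\to\mathcal{N}(\mathfrak{g})$.
\item Every irreducible representation of $S_n$ is special: Lusztig's families are singletons, because the generic degrees satisfy $a_\sigma=b_\sigma$ for all $\sigma$.
\end{enumerate}
By the definition recalled before \eqref{Spr}, every nilpotent orbit is then special.

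\emph{Expected difficulty.} No step is a real obstacle. The only care needed is in the $GL_n$ versus $SL_n$ versus adjoint-group bookkeeping, together with fixing the transpose convention in the Springer correspondence. Both are standard, and I would cite \cite{CM} for them.
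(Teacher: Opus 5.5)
Your proof is correct. The paper gives no argument of its own here; it simply quotes \cite[Thm.~5.1.1 \& Thm.~6.3.2]{CM}. Your Jordan-form argument, together with $GL_n=\mathbb{C}^\times\cdot SL_n$, is the standard proof of the classification.

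For speciality, your ``cross-check'' is really the proof. Special orbits are defined through ${\rm Spr}$, as just before \eqref{Spr}. So it suffices that ${\rm Spr}$ is a bijection ${\rm Irr}(S_n)\to\mathcal{N}(\mathfrak{sl}_n)$, since all $A_{\mathcal O}$ are trivial for the adjoint group. It also requires that every $\sigma\in{\rm Irr}(S_n)$ is special, i.e.\ $a_\sigma=b_\sigma$. This holds because generic and fake degrees coincide in type $A$. Singleton families are a consequence of this, not its cause.

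Your duality argument, by contrast, rests on the theorem that the image of $d_{\rm LS}$ is exactly the set of special orbits. Once $d_{\rm LS}(\mathbf{p})=\mathbf{p}^t$ is known to be a bijection, that theorem in type $A$ is literally equivalent to the claim you are proving. So it adds nothing unless you cite a uniform proof of that theorem.
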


    \begin{Prop}[{\cite[Thm. 5.1.2 \& Prop. 6.3.7]{CM}}]\label{Orbit-B}
		Nilpotent orbits in $\mathfrak{so}{(2n+1,\mathbb{C})}$ are in one-to-one correspondence with the set of partitions of $2n+1$ in which even parts occur with even multiplicity. A nilpotent orbit (corresponding to a partition ${\bf p}$) of type $B$ is special if and only if  ${\bf p}^t$ corresponds to a nilpotent orbit of type $B$.
	\end{Prop}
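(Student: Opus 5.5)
The plan has two independent parts: the classification of orbits, and the characterization of special orbits.

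For the classification, take a nilpotent $X\in\mathfrak{so}(V,B)$ with $\dim V=2n+1$ and $B$ a nondegenerate symmetric form. Its Jordan type in the standard representation gives a partition ${\bf p}$ of $2n+1$, and I would show that ${\bf p}$ is a complete invariant of the $G$-orbit of $X$. By Jacobson--Morozov, $X$ embeds in an $\mathfrak{sl}_2$-triple $(X,H,Y)\subset\mathfrak{so}(V)$. Then
\[
V\cong\bigoplus_d V_d\otimes M_d ,
\]
where $V_d$ is the irreducible $\mathfrak{sl}_2$-module of dimension $d$ and $M_d$ is a multiplicity space with $\dim M_d$ equal to the number of parts of ${\bf p}$ equal to $d$.

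The form $B$ is $\mathfrak{sl}_2$-invariant, so it decomposes as an orthogonal sum of forms $\beta_d\otimes\gamma_d$. Here $\beta_d$ is the unique invariant form on $V_d$, which is symmetric for $d$ odd and alternating for $d$ even. Since $B$ is symmetric, $\gamma_d$ must be symmetric for $d$ odd and symplectic for $d$ even. A symplectic form forces $\dim M_d$ to be even, so even parts occur with even multiplicity.

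Conversely, given such a partition, I would choose nondegenerate forms $\gamma_d$ of the required parity and build $(V,B,X)$ directly. Uniqueness comes from two facts. First, conjugacy of $\mathfrak{sl}_2$-triples (Kostant) reduces the question to isometry classes of the data $(M_d,\gamma_d)$, and these are unique over $\mathbb{C}$. Second, $O(V)$-orbits coincide with $SO(V)$-orbits in odd dimension, because $-\mathrm{id}\in O(V)\setminus SO(V)$ is central. Hence $G$-orbits correspond bijectively to these partitions.

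For the specialness criterion, I would use Lusztig's definition: ${\rm Spr}_{\mathfrak g,\mathbf 1}^{-1}(\mathcal O_{\bf p})$ is a special representation of $W(B_n)$. The first step is to compute the Springer representation attached to $\mathcal O_{\bf p}$ (with trivial local system) through Shoji's symbol algorithm. One adds $0,1,2,\dots$ to the parts of ${\bf p}$ (padded to odd length), then separates odd and even entries to obtain a symbol $\binom{\lambda}{\mu}$. Special representations are exactly those whose symbol interlaces, $\lambda_1\le\mu_1\le\lambda_2\le\cdots$.

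The second step, which I expect to be the main obstacle, is translating interlacing into a statement about ${\bf p}$. I would argue one block of the partition at a time. Interlacing fails precisely when some even part of ${\bf p}$ occupies the wrong position relative to the odd parts. Equivalently, ${\bf p}$ is special if and only if every maximal run of parts lying strictly between two consecutive odd parts behaves well, namely each even part $2k$ has an even number of parts $\ge 2k$ (counted suitably from the top) at the positions where it occurs.

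The final step is to compare this with the transpose condition. The multiplicity of $j$ in ${\bf p}^t$ equals $p_j-p_{j+1}$, so "${\bf p}^t$ is a $B$-partition" says that $p_j-p_{j+1}$ is even whenever $j$ is even. Since $|{\bf p}^t|=2n+1$, the size condition is automatic. I would check by induction on the number of distinct parts that this parity condition matches the symbol condition. The base cases are ${\bf p}$ with all parts odd, which are special and have an admissible transpose, and ${\bf p}=[2,2,1,\dots]$. The delicate point is to track how the shifts $0,1,2,\dots$ in the symbol interact with the parities of $p_j-p_{j+1}$. As a sanity check, I would confirm the criterion against the known order-reversing Spaltenstein--Barbasch--Vogan duality, under which special orbits are exactly those fixed by double duality.
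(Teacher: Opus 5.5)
The paper gives no proof of this proposition; it just quotes [CM, Thm.~5.1.2 \& Prop.~6.3.7]. Your first half is correct and is the standard argument behind CM's Theorem~5.1.2: Jacobson--Morozov, the decomposition $V\cong\bigoplus_d V_d\otimes M_d$, the parity of the forms $\gamma_d$, isometries of the multiplicity spaces, and $-\mathrm{id}$ to pass from $O(V)$ to $SO(V)$. Kostant's conjugacy theorem is not actually needed there: for $X,X'$ of the same Jordan type, choose a triple for each and build an isometry intertwining them directly.

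The gap is in the specialness half. Your framework is right: the Springer representation via Shoji's symbols, with special meaning interlacing. But the step that carries all the content, namely that interlacing holds if and only if ${\bf p}^t$ is a $B$-partition, is never carried out. You defer it to an unspecified induction on the number of distinct parts and leave its ``delicate point'' open. Moreover, the one concrete reformulation you give is false as written. For special ${\bf p}$, each even part $2k$ has an \emph{odd} number of parts $\ge 2k$. For example, $[3,2,2]$ is special, since ${\bf p}^t=[3,3,1]$, yet it has three parts $\ge 2$. The correct statement is that $2k$ first occurs at an even position.

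The step closes directly, without induction, as follows.
\begin{itemize}
\item A $B$-partition has an odd number of nonzero parts: it has an odd number of odd parts and an even number of even ones. So no padding is needed; write it increasingly as $\lambda_1\le\cdots\le\lambda_{2m+1}$.
\item Set $\lambda^*_i=\lambda_i+i-1$, which is strictly increasing. Let $o_1<\cdots<o_{m+1}$ be its odd entries and $e_1<\cdots<e_m$ its even entries.
\item The symbol with rows $\xi_i=(o_i-1)/2$ and $\eta_i=e_i/2$ interlaces if and only if $o_i\le e_i+1$ and $e_i<o_{i+1}$ for all $i$.
\item These inequalities force $\max(o_i,e_i)<\min(o_{i+1},e_{i+1})$. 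Hence $\{\lambda^*_{2i-1},\lambda^*_{2i}\}=\{o_i,e_i\}$, that is, $\lambda_{2i-1}\equiv\lambda_{2i}\pmod 2$ for all $i\le m$.
\item In decreasing indexing this says $p_j\equiv p_{j+1}$ for every even $j$. That is exactly your condition that ${\bf p}^t$ be of type $B$.
\item Conversely, the parity condition gives the pair structure, hence $e_i<o_{i+1}$.
\item The inequality $o_i\le e_i+1$ could fail only when $\lambda^*_{2i-1}$ is even and $\lambda_{2i-1}<\lambda_{2i}$, with both even. Descent rules this out. The run of the value $\lambda_{2i-1}$ ends at the odd position $2i-1$ and has even length, so it starts at an even position $2s$ with $1\le s<i$. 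Then $\lambda_{2s-1}<\lambda_{2s}$ are again both even, and this is impossible at $i=1$.
\end{itemize}
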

	
	\begin{Prop}[{\cite[Thm. 5.1.3 \& Prop. 6.3.7]{CM}}]\label{Orbit-C}
		Nilpotent orbits in $\mathfrak{sp}{(n,\mathbb{C})}$ are in one-to-one correspondence with the set of partitions of $2n$ in which odd parts occur with even multiplicity. A nilpotent orbit (corresponding to a partition ${\bf p}$) of type $C$ is special if and only if  ${\bf p}^t$ corresponds to a nilpotent orbit of type $C$.
	\end{Prop}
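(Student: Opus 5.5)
The statement consists of two parts: the classification of nilpotent orbits in $\mathfrak{sp}(n,\mathbb{C})$ by partitions, and the criterion for specialness. I will prove them separately, following the standard route of \cite{CM}.

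\emph{Classification.} Let $V=\mathbb{C}^{2n}$ with its symplectic form $\omega$, and let $X\in\mathfrak{sp}(V)$ be nilpotent.
\begin{enumerate}
\item By Jacobson--Morozov, $X$ lies in an $\mathfrak{sl}_2$-triple $\{H,X,Y\}\subset\mathfrak{sp}(V)$.
\item Decompose $V$ into isotypic components $V\cong\bigoplus_d M_d\otimes V_d$, where $V_d$ is the irreducible $\mathfrak{sl}_2$-module of dimension $d$ and $M_d$ is its multiplicity space. The Jordan type of $X$ is the partition ${\bf p}$ with $d$ appearing $\dim M_d$ times.
\item Since $\omega$ is $\mathfrak{sl}_2$-invariant, distinct isotypic components are $\omega$-orthogonal. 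Hence $\omega|_{M_d\otimes V_d}$ is nondegenerate and equals $\beta_d\otimes\gamma_d$. Here $\gamma_d$ is the unique (up to scalar) invariant form on $V_d$, which is symmetric for $d$ odd and skew for $d$ even.
\item Because $\omega$ is skew, $\beta_d$ must be skew when $d$ is odd and symmetric when $d$ is even. A nondegenerate skew form exists only in even dimension, so odd parts occur with even multiplicity.
\item Conversely, every such partition is realized by taking an orthogonal direct sum of model blocks $(M_d,\beta_d)\otimes(V_d,\gamma_d)$.
\item Two nilpotents with the same partition are conjugate. Nondegenerate symmetric forms, and likewise nondegenerate skew forms, of a fixed dimension are unique up to isometry. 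So we can build an isometry $V\to V$ intertwining the two $\mathfrak{sl}_2$-actions, which lies in $Sp(V)$ and conjugates one $X$ to the other.
\item Finally, conjugacy of nilpotents is detected by Jordan type, and Kostant's theorem on conjugacy of $\mathfrak{sl}_2$-triples ensures the construction does not depend on the choice of triple.
\end{enumerate}

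\emph{Specialness.} Specialness is defined through the Springer correspondence \eqref{Spr}: $\mathcal{O}_{\bf p}$ is special iff ${\rm Spr}^{-1}_{\mathfrak g,\mathbf 1}(\mathcal{O}_{\bf p})$ is a special representation of $W(C_n)$.
\begin{enumerate}
\item Compute ${\rm Spr}^{-1}_{\mathfrak g,\mathbf 1}(\mathcal{O}_{\bf p})$ using Shoji's algorithm. Add $0,1,\dots$ to the parts of ${\bf p}$, taken with an even number of parts. Separate the resulting numbers into odd and even, then halve them, to obtain a Lusztig symbol, i.e.\ a bipartition.
\item Lusztig's criterion says the representation is special iff the symbol's two rows interlace: $\lambda_1\le\mu_1\le\lambda_2\le\cdots$.
\item Translating interlacing back to ${\bf p}$ gives the known combinatorial condition. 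Write the parts of ${\bf p}$ in decreasing order. Then the number of even parts strictly larger than the largest odd part is even, and the number of even parts lying strictly between two consecutive distinct odd parts is even.
\item Show this condition is equivalent to ${\bf p}^t$ being a $C$-partition, i.e.\ to odd columns of ${\bf p}$ occurring with even multiplicity. The multiplicity of the column length $j$ in ${\bf p}^t$ is $p_j-p_{j+1}$.
\item Argue block by block between consecutive odd parts. Even multiplicity of odd rows forces the row index of each odd-part block to change parity exactly according to the number of even parts preceding it. The jumps $p_j-p_{j+1}$ at odd $j$ are odd precisely when a run of even parts of odd length ends there.
\end{enumerate}

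The main obstacle is the specialness criterion, specifically the bookkeeping between Shoji's symbol, Lusztig's interlacing condition, and the transposed partition. I would handle it by induction on the number of distinct odd parts, peeling off the largest block each time.
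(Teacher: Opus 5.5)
The paper gives no proof of its own here; the statement is quoted from \cite{CM}. Your classification half is the standard $\mathfrak{sl}_2$-triple/multiplicity-space argument, as in \cite{CM}, and it is correct. The appeal to Kostant's theorem at the end is superfluous: Jordan type is visibly an $Sp$-invariant, and your isometry construction already gives injectivity.

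The specialness half has a genuine gap, for two reasons.

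\emph{The normalization of the Springer symbol is wrong for type $C$.} You must pad $\mathbf{p}$ to an \emph{odd} number $2m+1$ of parts $p_1\le\cdots\le p_{2m+1}$. Since odd parts fill blocks of even length, the numbers $p_i^*=p_i+i-1$ then split into $m+1$ even ones $2\xi_1<\cdots<2\xi_{m+1}$ and $m$ odd ones $2\eta_1+1<\cdots<2\eta_m+1$. Specialness means $\xi_1\le\eta_1\le\xi_2\le\cdots\le\eta_m\le\xi_{m+1}$. With an even number of parts, as you wrote, the two rows have equal length. Whichever row you put on top, the interlacing test then misclassifies one of $[4]$ and $[2,1,1]$ in rank $2$.

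\emph{The key step is only asserted.} The one step with real content is that interlacing is equivalent to a parity condition on $\mathbf{p}$. You state it and defer it to an unspecified induction, so the criterion is not actually proved. It can be done directly, bypassing your intermediate condition.

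The multiplicity of $j$ in $\mathbf{p}^t$ is the difference of the $j$-th and $(j+1)$-st largest parts. Hence $\mathbf{p}^t$ is a $C$-partition iff $p_1$ is even and $p_{2i}\equiv p_{2i+1}\pmod 2$ for $1\le i\le m$. Equivalently, $p_1^*$ is even and each pair $\{p_{2i}^*,p_{2i+1}^*\}$ has one even and one odd entry.

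Assume this holds. Then $p_1^*=2\xi_1$ and the $i$-th pair is $\{2\xi_{i+1},2\eta_i+1\}$, so $\xi_i\le\eta_i$.
\begin{itemize}
\item If the odd entry of the pair comes first, then $\eta_i<\xi_{i+1}$.
\item If the even entry comes first, then $p_{2i}$ and $p_{2i+1}$ are both odd. They must be equal: otherwise the block of the odd value $p_{2i}$, which has even length and ends at position $2i$, starts at an odd position $2i'+1$. Since $p_1$ is even, $i'\ge 1$, so $p_{2i'}<p_{2i'+1}$ are both odd, and one descends to a contradiction. Equality gives $\eta_i=\xi_{i+1}$.
\end{itemize}

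Conversely, suppose the condition fails.
\begin{itemize}
\item If $p_1^*$ is odd, then $\eta_1<\xi_1$.
\item Otherwise take the first pair whose entries have equal parity. Two even entries force $\eta_i\ge\xi_{i+2}>\xi_{i+1}$. Two odd entries force $\xi_{i+1}>\eta_{i+1}$.
\end{itemize}
In either case interlacing fails.

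With the padding corrected and this argument inserted, your proof is complete.
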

    
    \begin{Prop}[{\cite[Thm. 5.1.4 \& Prop. 6.3.7]{CM}}]\label{orbit-D}
		Nilpotent orbits in $\mathfrak{so}{(2n,\mathbb{C})}$ are in one-to-one correspondence with the set of partitions of $2n$ in which even parts occur with even multiplicity,
		except that each ``very even" partition ${\bf p}$ (consisting of only even parts) corresponds to two orbits, denoted by $\mathcal{O}^I_{\bf p}$ and $\mathcal{O}^{II}_{\bf p}$. A nilpotent orbit (corresponding to a partition ${\bf p}$) of type $D$ is special if and only if  ${\bf p}^t$ corresponds to a nilpotent orbit of type $C$.
	\end{Prop}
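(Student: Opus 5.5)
The plan is to split the statement into two independent parts: the parametrization of orbits, and the characterization of special orbits.

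\textbf{Parametrization.} Let $V=\mathbb{C}^{2n}$ with a nondegenerate symmetric form $(\,,)$, so that $\mathfrak{so}(2n,\mathbb{C})=\{X : (Xu,v)=-(u,Xv)\}$.
\begin{enumerate}
\item Given a nilpotent $X$ in $\mathfrak{so}(V)$, I would view $V$ as a module over $\mathbb{C}[t]$ with $t$ acting by $X$. I would then prove a normal form: $V$ is an orthogonal direct sum of $X$-stable nondegenerate subspaces of two kinds.
\begin{itemize}
\item A single Jordan block of odd size $d$, carrying the form $(X^iu,X^ju)=(-1)^i\delta_{i+j,d-1}$.
\item A pair of Jordan blocks of the same even size $d$, placed in duality.
\end{itemize}
The key computation is on a single block of size $d$ generated by $u$. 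The pairing $(X^iu,X^{d-1-i}u)$ changes by the sign $(-1)^{d-1}$ under $i\leftrightarrow d-1-i$. For $d$ even this forces the pairing to be alternating, so it cannot be nondegenerate symmetric. Hence even blocks must come in pairs. An induction on $\dim V$ splits off a maximal block, or a pair of blocks, as an orthogonal summand.
\item It follows that even parts occur with even multiplicity. Conversely, every partition of $2n$ satisfying this condition arises from the explicit models above.
\item Two nilpotents in $\mathfrak{so}(V)$ with the same partition are conjugate under $O(V)$. This follows from Witt's theorem applied to the explicit adapted bases. So $O(V)$-orbits are in bijection with these partitions.
\end{enumerate}

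\textbf{Passage from $O(V)$ to $SO(V)$.} An $O(V)$-orbit is a single $SO(V)$-orbit exactly when the centralizer $Z_{O(V)}(X)$ meets $O(V)\setminus SO(V)$. Otherwise it splits into two $SO(V)$-orbits.
\begin{itemize}
\item If some part $d$ is odd, take the reflection-type element that acts by $-1$ on one odd block and by $1$ elsewhere. It commutes with $X$ and has determinant $(-1)^d=-1$, so the orbit does not split.
\item If all parts are even, I would show $Z_{O(V)}(X)\subset SO(V)$. The reductive part of the centralizer is $\prod_d Sp(m_d)$, where $m_d$ is the multiplicity of the part $d$. This group is connected, and the unipotent radical is connected, so the whole centralizer is connected and lies in $SO(V)$.
\end{itemize}
This yields the two orbits $\mathcal{O}^I_{\bf p}$ and $\mathcal{O}^{II}_{\bf p}$ in the very even case.

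\textbf{Special criterion.} I would use Lusztig's symbol description of the Springer correspondence for type $D$ (as in \cite{CM}). The orbit with partition ${\bf p}$ corresponds, via ${\rm Spr}^{-1}_{\mathfrak{g},\mathbf{1}}$, to a symbol. The representation is special precisely when the symbol's two rows interlace.

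I would translate interlacing into a condition on the parts of ${\bf p}$. Listing the parts increasingly and pairing consecutive ones, the condition should become: for every even part $p_i$, both the number of parts $\geq p_i$ and the number of parts $>p_i$ are even. Finally, I would check that this is equivalent to ${\bf p}^t$ having odd parts with even multiplicity, i.e. ${\bf p}^t$ is a $C$-partition.

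Alternatively, one can use the characterization that $\mathcal{O}_{\bf p}$ is special iff $d(d({\bf p}))={\bf p}$, where $d$ is the Spaltenstein map ${\bf p}\mapsto$ ($D$-collapse of ${\bf p}^t$). One then shows that the collapse is trivial exactly under the transpose condition.

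The main obstacle is this last combinatorial translation between interlacing symbols and the parity conditions on ${\bf p}^t$. I would first handle it by checking that the column lengths of ${\bf p}$ occur in pairs exactly at the positions where the symbol entries interlace.
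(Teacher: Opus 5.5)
The parametrization and the $O(V)$-versus-$SO(V)$ analysis in your plan are fine. They are the standard argument; the paper gives no proof of its own and simply quotes \cite{CM}.

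The genuine gap is the special criterion, which is the only part of the statement that is not classical linear algebra. You never write down the type-$D$ Springer symbol of $(\mathcal{O}_{\mathbf p},\mathbf 1)$. The equivalence between interlacing and your parity condition is only announced (``should become''), and the closing remark about column lengths occurring in pairs is not an argument. Your fallback does not close the gap either. With ``special'' defined through the Springer correspondence, as in this paper, ``special iff $d(d(\mathbf p))=\mathbf p$'' is itself a nontrivial theorem. Moreover, the collapse in question is not trivial on special partitions: for $\mathbf p=[3,1]$ one gets $[2,1,1]_D=[1^4]$ and then $[4]_D=[3,1]$.

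Your target condition is correct. For a $D$-partition it says that $\#\{i:p_i\ge v\}$ is even for every even part $v$, and this is equivalent to $\mathbf p^t$ being a $C$-partition. Here is one way to supply the missing step.

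Write $p_1\le\cdots\le p_{2m}$ and $\lambda_i=p_i+i-1$; the odd $\lambda_i=2\xi+1$ form one row and the even $\lambda_i=2\eta$ the other. The $\lambda_i$ form one run of consecutive integers for each distinct part value $v$, and consecutive runs are separated by at least one missing integer. Cut $\mathbb{Z}_{\ge 0}$ into pairs $\{2k,2k+1\}$. A pair containing two $\lambda_i$ puts $k$ into both rows and is harmless. Hence, comparing the counting functions of the two rows, the rows interlace iff the half-filled pairs, read upward, alternate between odd type (only $2k+1$ present) and even type (only $2k$ present).

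Half-filled pairs occur only at the ends of runs. The run of $v$ starts with a pair of odd type iff $v+\#\{i:p_i\ge v\}$ is odd. It ends with a pair of even type iff $v+\#\{i:p_i>v\}$ is odd.

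Suppose the condition holds. Then runs of even values contribute nothing. Every maximal string of odd part values with no even part value between them contributes odd, even, $\dots$, odd, even. So the types alternate.

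Suppose instead $v$ is the largest even part violating the condition. Then:
\begin{enumerate}
\item The run of $v$ ends with a pair of even type.
\item The next larger part value is odd, and its run starts with no half-filled pair.
\item Along the string of odd values above $v$, the end of one run and the start of the next are half-filled simultaneously, because consecutive values differ by an even number.
\item The last run of that string ends with a pair of even type, by maximality of $v$.
\end{enumerate}
So the next half-filled pair is again of even type, and interlacing fails.
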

	
 \begin{Prop}[{\cite[Thm. 8.2.4 ]{CM}}]\label{Orbit-dis}
	The distinguished orbits of $\mathfrak{g}$ are as follows:
\begin{enumerate}
    \item If $\mathfrak{g}$ is of type $A_{n-1}$, then the only distinguished orbit is the regular orbit with the partition $\mathbf{p}=[1^n]$.
    \item If $\mathfrak{g}$ is of type $B$, $C$, or $D$, then an orbit is distinguished if and only if its partition has no repeated parts. Thus, the partition of a distinguished orbit in types $B$ and $D$ consists only of odd parts, each with multiplicity one, while the partition of a distinguished orbit in type $C$ consists only of even parts, each with multiplicity one.
\end{enumerate}
\end{Prop}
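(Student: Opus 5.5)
The plan is to use the standard criterion: a nilpotent orbit $\mathcal{O}$ is distinguished exactly when the reductive part of the centralizer of $x\in\mathcal{O}$ contains no nontrivial torus. Equivalently, $x$ lies in no proper Levi subalgebra. The statement then reduces to computing these reductive centralizers explicitly in terms of the partition $\mathbf{p}$.

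First, complete $x$ to an $\mathfrak{sl}_2$-triple $\{x,h,y\}$ via Jacobson--Morozov. Recall that $Z_G(x)=C\ltimes U$ with $U$ unipotent and $C=Z_G(x,h,y)$ reductive. If $C$ contains a nontrivial torus $S$, then $Z_{\mathfrak g}(\mathfrak s)$ is a proper Levi subalgebra containing $x$. Conversely, if $x$ lies in a proper Levi $\mathfrak l$, then the connected center of $L$ centralizes $x$. After conjugating into $C$ (Levi decomposition of $Z_G(x)$, all maximal reductive subgroups being conjugate), this gives a nontrivial torus in $C$. So $\mathcal{O}$ is distinguished iff $C^\circ$ is semisimple with trivial central torus; since $C$ will turn out to be a product of classical groups, this amounts to $C$ being finite.

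Next, decompose the natural module $V$ as an $\mathfrak{sl}_2$-module, $V\cong\bigoplus_i V_i\otimes \mathbb{C}^{r_i}$. Here $V_i$ is the irreducible module of dimension $i$ and $r_i$ is the multiplicity of the part $i$ in $\mathbf{p}$. Then $C$ is the group of automorphisms of the multiplicity spaces $\mathbb{C}^{r_i}$ compatible with the extra structure:
\begin{enumerate}
\item In type $A$, $C=S(\prod_i GL(r_i))$.
\item In types $B,D$, $C=S(\prod_{i\ \mathrm{odd}}O(r_i)\times\prod_{i\ \mathrm{even}}Sp(r_i))$.
\item In type $C$, $C=\prod_{i\ \mathrm{odd}}Sp(r_i)\times\prod_{i\ \mathrm{even}}O(r_i)$.
\end{enumerate}
The parity rule comes from the invariant form on $V$. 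This form restricts to $V_i\otimes\mathbb{C}^{r_i}$ as a tensor product of the $\mathfrak{sl}_2$-invariant form on $V_i$, which is symmetric for $i$ odd and symplectic for $i$ even, with a form on $\mathbb{C}^{r_i}$ of the complementary symmetry needed to produce the global symmetric or symplectic form. This step is where I expect the main bookkeeping effort: checking the sign rule and the determinant condition. I will take it as the standard computation in \cite{CM}.

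Finally, read off finiteness. $GL(r)$ has a positive-dimensional center whenever $r\ge1$. In type $A$, $S(\prod GL(r_i))$ therefore contains a nontrivial torus unless there is a single block, $\mathbf{p}=[n]$, which is the regular orbit. This corresponds to the paper's $[1^n]$ under its transposed convention for type $A$ partitions. $Sp(r_i)$ with $r_i\geq 2$ always contains a torus, and $O(r)$ is finite iff $r\le1$. Hence in types $B,D$ we need no even parts and every odd part with multiplicity one. In type $C$ we need no odd parts and every even part with multiplicity one. Both cases say exactly that $\mathbf{p}$ has no repeated parts, since the forbidden parity classes automatically have even multiplicity (Propositions \ref{Orbit-B}--\ref{orbit-D}) and so are excluded once parts are distinct.
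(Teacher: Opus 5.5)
Your argument is correct and is essentially the standard proof behind the cited \cite[Thm.~8.2.4]{CM}. There, distinguishedness is reduced to the reductive centralizer of an $\mathfrak{sl}_2$-triple containing no nontrivial torus; \cite{CM} state this at the Lie algebra level as the vanishing of the centralizer of the triple, which is equivalent to your $C=Z_G(x,h,y)$ being finite. That condition is then read off from the explicit centralizers built from $GL(r_i)$, $O(r_i)$ and $Sp(r_i)$, exactly as you do.

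Only small repairs are needed:
\begin{enumerate}
\item Your intermediate criterion ``$C^\circ$ semisimple with trivial central torus'' is not equivalent to distinguishedness. The zero orbit in $\mathfrak{sl}_2\cong\mathfrak{sp}_2$ has $C=Sp(2)$, which satisfies it. The finiteness criterion you actually use afterwards is the right one.
\item In type $A$, for $\mathbf{p}=[i^r]$ with $r\ge 2$, the torus comes from $SL(r)\subseteq C$, not from the center of $GL(r)$, which the determinant condition kills.
\item The $[1^n]$ in the statement is simply a misprint for $[n]$, as in \cite{CM}, not a transposed convention. The paper uses Jordan-type partitions throughout, and under that convention $[1^n]$ is the zero orbit.
\end{enumerate}
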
  

Given two partitions ${\bf d}=[d_1,\dots,d_s]$ and ${\bf f}=[f_1,\dots,f_t]$ of $N$, we say that ${\bf d}$ {\it dominates } ${\bf f}$ and write ${\bf d}\geq {\bf f}$ if the following condition holds:
\begin{equation}\label{domi}
    \sum\limits_{1\leq j\leq l}d_j\geq \sum\limits_{1\leq j\leq l}f_j
\end{equation}
for all $l\geq 1$, where missing parts are understood to be zero. Equivalently, it suffices to check $1\leq l\leq \max\{s,t\}$.

	\begin{Defff}[Collapse]	
		Let ${\bf d}=[d_1,\dots,d_k]$ be a partition of $2n$. There is a unique maximal partition of $2n$ of type $D_n$ dominated by ${\bf d}$ with respect to the dominace order. If  ${\bf d}$ is not a partition of type $D_n$, then one of its even parts must occur with an odd multiplicity. Let $q$ be the largest such part. Then replace the last occurrence of $q$ in ${\bf d}$ by $q-1$ and the first subsequent part $r$ strictly less than $q-1$ by $r+1$. Repeat this process until a partition of type $D_n$ is obtained. This new partition of type $D_n$
		is called the {\it $D$-collapse} of 	${\bf d}$, and we denote it by ${\bf d}_D$. Similarly, one can define the $B$-collapse and $C$-collapse of ${\bf d}$.
	\end{Defff}	

Further properties of the  collapse  of partitions can be found in \cite{CM}.

When no confusion can arise, we identify partitions and Young diagrams with nilpotent orbits. For a Young diagram $P$ with shape $p$, use $(k, l)$ to denote the box in the $k$-th row and
the $l$-th column. We say that the box $(k, l)$ is even (resp. odd) if $k + l$ is even (resp. odd).

\begin{Defff}
		By removing all the odd boxes from a Young  diagram  $P$, we obtain a  diagram $ P^{\ev} $  consisting of only even boxes and inheriting the filling from $ P $. We say $ P^{\ev} $ is the \textit{even  diagram} of $ P $. Similarly, we define $ P^{\od} $. These  diagrams are called {\it Hollow  diagrams} of $P$ in \cite{BXX}.
	\end{Defff}
	
\begin{ex}\label{evenbox}
		Let $P$ be a Young diagram with shape $ p=[3,3,2,2,1] $. The even and odd boxes in $P$ are marked as follows:
		\[
		\tytb{EOE,OEO,EO,OE,E}.
		\]
		Then we have
		\[
		P^{\ev}=\tytb{E\none E, \none E, E, \none E, E}
		\]	
and
\[
		P^{\od}=\tytb{\none O\none,O\none O,\none O, O}.
		\]	
	\end{ex}
	
	By the result in \cite{BXX} or \cite{BMW},  from an integral weight $\lambda$ (or a classical Weyl group element $w_{\lambda}\in W$), we obtain a special partition ${\bf p}^s$ of classical type, which corresponds to the special nilpotent orbit $\mathcal{O}_{w_{\lambda}}$. This algorithm is equivalent to the H-algorithm defined in \cite[\S 8.2]{BMW}. Roughly speaking, the special partition ${\bf p}^s$  has the same odd (resp. even) boxes for types $B$
and $C$ (resp. type $D$) with the partition ${\bf p}=p(\lambda^-)$. We recall the definition of the H-algorithm of type $B$ in the following.

 \begin{Defff}[H-algorithm of type $B$]
    Let $\lambda\in \mathfrak{h}^*$ be an integral weight of type $B_n$ and ${\bf p}=p(\lambda^-)$ be a partition (whose Young diagram is $P$) of $2n$, then we can get a special partition  ${\bf p}^s$ of type $B_n$ by the following steps:
    \begin{enumerate}
\item Construct  the Hollow diagram $P^{\od}$ consisting of odd boxes;
\item Label the rows starting from $1$ but skip any consecutive rows ending with the shape  $\tytb{O,\none O}$;
\item Keep even labeled rows unchanged and put $\tytb{E}$ at the end of each odd labeled row;
\item Fill the holes. If there are only $2n$ boxes in our new Young diagram, we put a box $\tytb{E}$  below the last row and we are done. If there are  $2n+1$ boxes in our new Young diagram, we are done.
\end{enumerate}
We call the above algorithm the  {\it H-algorithm of type $B$}.
\end{Defff}

Similarly, there are H-algorithms of type $C$ and type $D$.  See \cite[\S 8.2]{BMW} for more details.

\subsection{Sommers duality algorithm for annihilator varieties of highest weight modules}

Let $\Phi$ be a root system and let
\[
\alpha_0=\sum_{i=1}^nh_i\alpha_i
\]
be the highest root of $\Phi$, where $h_i$ are non-negative integers and $\Delta=\{\alpha_i\mid 1\leq i\leq n\}$ is the set of simple roots in $\Phi^+$. 
For every $1\leq i, j \leq n$, we set
$$\tilde{\Phi}(i):=\Phi \cup \{-\alpha_0\} - \{\alpha_i\}, \quad \Phi(j):=\Phi - \{\alpha_j\}.$$

\begin{Defff}\label{thmbd}
	Let $\Phi$ be irreducible. A root subsystem of $\Phi$ is called pseudo-maximal if it is a proper root subsystem and is equal (up to the action of $W$) to a maximal element of the set
$\{\tilde{\Phi}(i), \Phi(j): \ 1\leq i, j \leq n \}$.
\end{Defff}

Let $G$ be an algebraic group over $\mathbb{C}$ with Lie algebra $\mathfrak{g}$. Recall that a pseudo-Levi subgroup $M \subseteq G$ is the connected  centralizer subgroup of a semisimple element of $G$. Such a group is essentially characterized by the fact that the Dynkin diagram of its root system is a subset of the extended Dynkin diagram of the root system of $G$. We consider pairs of the form $(M, \mathcal{O}_M)$, where $M$ is a pseudo-Levi subgroup and $\mathcal{O}_M \in \mathcal{N}(M)$ is a distinguished orbit. Let $G^\vee$ be the Langlands dual group of $G$. Then Sommers \cite{Sommers} defined a natural map
$$d_{\rm Som}^G: \{(M, \mathcal{O}_M) \} \longrightarrow \mathcal{N}(G^\vee)$$
that extends the Barbasch--Vogan dual map $d_{\rm BV}: \mathcal{N}(G) \to \mathcal{N}^{\rm sp}(G^\vee)$, where $\mathcal{N}^{\rm sp}(G) \subset \mathcal{N}(G)$ denotes the subset of special nilpotent orbits of $G$. We recall this map in the following.

 First, one has the Lusztig--Spaltenstein map $d_{\rm LS}^G: \mathcal{N}(G) \longrightarrow \mathcal{N}^{\rm sp}(G)$. Moreover, one has a natural isomorphism 
$$\tau_{G, G^\vee}: W(G) \longrightarrow W(G^\vee)$$
of the two Weyl groups, which induces a bijection (using the same notation)
$$\tau_{G, G^\vee}: {\rm Irr}(W(G)) \longrightarrow {\rm Irr}(W(G^\vee)).$$
It furthermore induces a bijection (by abuse of notation again)
$$\tau_{G, G^\vee}: \mathcal{N}^{\rm sp}(G) \longrightarrow \mathcal{N}^{\rm sp}(G^\vee).$$
One has $d_{\rm BV}:= \tau_{G, G^\vee} \circ d_{\rm LS}^G$. All these maps mentioned above fit into the following commutative diagram:
$$\begin{tikzcd}
\mathcal{N}^{\rm sp}(G) \ar[r, "{\tau_{G,G^\vee}}"] & \mathcal{N}^{\rm sp}(G^\vee)  \ar[r, hook] & \mathcal{N}(G^\vee) \\
& \mathcal{N}(G) \ar[lu, "{d_{\rm LS}^G}"] \ar[u, "{d_{\rm BV}^G}"] \ar[r, hook] & {\{(M, \mathcal{O}_M) \} } \ar[u, "{d_{\rm Som}^G}"] .
\end{tikzcd}
$$
Here the bottom injection is provided by the Bala--Carter classification of nilpotent orbits, asserting that every $\mathcal{O} \in \mathcal{N}(G)$ corresponds uniquely to a $G$-conjugacy class of pair $(L, \mathcal{O}_L)$, where $L \subseteq G$ is a Levi subgroup and $\mathcal{O}_L$ a distinguished orbit for $L$. In this case, we write
$${\rm sat}_L^G(\mathcal{O}_L):= G \cdot \mathcal{O}_L \in \mathcal{N}(G)$$ for the nilpotent orbit thus obtained. 

Let $M \subset G$ be a pseudo-Levi subgroup and $\mathcal{O}_{M}$ a distinguished orbit for $M$. Then it was shown by Sommers \cite[\S 6]{Sommers} that 
\begin{equation} \label{E:dSom}
d_{\rm Som}^{G}(M, \mathcal{O}_{M}) = \mathcal{O}_{\rm Spr}^{G^\vee} \circ \tau_{G, G^\vee} \circ j_{W(M)}^{W(G)} \circ {\rm Spr}_{M, \mathbf{1}}^{-1} \circ d_{\rm LS}^{M}(\mathcal{O}_{M}),
\end{equation}
where $j_{W(M)}^{W(G)}$ denotes the $j$-induction operator, see \cite[\S 11]{Ca85} or \cite{BMW} for more details about this operator.

We have the following algorithm to compute the annihilator variety of a highest weight module.

\begin{Thm}[{\cite[Thm. 1.2]{BGWX}}]\label{som-alg}
Keep the notation as above.
Let $H^\vee \subseteq G^\vee$ be a pseudo-Levi subgroup with root system $\Phi_{[\lambda]}^\vee$. Let $H$ be its Langlands dual group, whose root system is then $\Phi_{[\lambda]}$.
Now $\pi_{w_\lambda} \in {\rm Irr}(W(H))$ is a special representation of $W(H)$. Let $\pi^{\vee}_{w_\lambda}$ be the corresponding special representation of $W(H^{\vee})$ obtained via the canonical isomorphism $W(H) \simeq W(H^\vee)$. Furthermore, let $L^\vee\subset H^\vee$ be a Levi subgroup of $H^\vee$ and $\mathcal{O}_{L^\vee}$ a distinguished nilpotent orbit of $L^\vee$ such that $d_{\rm LS}^{H^\vee}(\mathcal{O}(\pi^{\vee}_{w_\lambda})) = H^\vee\cdot \mathcal{O}_{L^\vee}$.
Then 
$$\mathcal{O}_{{\rm Ann}(L(\lambda))} = d_{\rm Som}^{G^\vee}(L^{\vee}, \mathcal{O}_{L^\vee}),$$
where $d_{\rm Som}^{G^\vee}$ is the duality map given in \cite{Sommers}.
\end{Thm}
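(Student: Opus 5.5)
The plan is to reduce both sides to the same Springer-theoretic expression. On the left we use Joseph's formula (Proposition \ref{2dim}); on the right we expand Sommers' formula \eqref{E:dSom}. The two are then matched using transitivity of $j$-induction and the Lusztig--Spaltenstein compatibility between saturation from a Levi and induction.

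\emph{Step 1 (left-hand side).} Write $\lambda=w\mu$ with $\mu$ antidominant and $w=w_\lambda\in W^J_{[\mu]}$. Since $W_{[\mu]}=W_{[\lambda]}=W(H)$, Proposition \ref{2dim} gives
\[
\mathcal{O}_{{\rm Ann}(L(\lambda))}=\mathcal{O}^{\mathfrak g}_{\rm Spr}\bigl(j_{W(H)}^{W(G)}(\pi_{w_\lambda})\bigr),
\]
after identifying $\mathfrak g\simeq\mathfrak g^*$ via the Killing form.

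\emph{Step 2 (right-hand side).} A Levi subgroup $L^\vee$ of the pseudo-Levi $H^\vee$ is itself a pseudo-Levi subgroup of $G^\vee$, and $\mathcal{O}_{L^\vee}$ is distinguished in $L^\vee$. Hence \eqref{E:dSom}, applied to $G^\vee$ (whose dual is $G$), gives
\[
d^{G^\vee}_{\rm Som}(L^\vee,\mathcal{O}_{L^\vee})=\mathcal{O}^{G}_{\rm Spr}\circ\tau_{G^\vee,G}\circ j_{W(L^\vee)}^{W(G^\vee)}\circ{\rm Spr}^{-1}_{L^\vee,\mathbf 1}\circ d^{L^\vee}_{\rm LS}(\mathcal{O}_{L^\vee}).
\]
We split the $j$-induction by transitivity,
\[
j_{W(L^\vee)}^{W(G^\vee)}=j_{W(H^\vee)}^{W(G^\vee)}\circ j_{W(L^\vee)}^{W(H^\vee)},
\]
and then treat the inner piece. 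By Lusztig--Spaltenstein, for a Levi subgroup $j$-induction of the Springer representation (trivial local system) of an orbit gives the Springer representation of the induced orbit:
\[
j_{W(L^\vee)}^{W(H^\vee)}\,{\rm Spr}^{-1}_{L^\vee,\mathbf 1}(\mathcal{O}')={\rm Spr}^{-1}_{H^\vee,\mathbf 1}\bigl({\rm Ind}_{L^\vee}^{H^\vee}\mathcal{O}'\bigr).
\]
Moreover, $d_{\rm LS}$ intertwines saturation and induction:
\[
{\rm Ind}_{L^\vee}^{H^\vee}\, d^{L^\vee}_{\rm LS}(\mathcal{O}_{L^\vee})=d^{H^\vee}_{\rm LS}\bigl(H^\vee\cdot\mathcal{O}_{L^\vee}\bigr).
\]
By hypothesis $H^\vee\cdot\mathcal{O}_{L^\vee}=d^{H^\vee}_{\rm LS}(\mathcal{O}(\pi^\vee_{w_\lambda}))$. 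Since $\mathcal{O}(\pi^\vee_{w_\lambda})$ is special and $d_{\rm LS}^2$ is the identity on special orbits, the inner expression collapses to
\[
{\rm Spr}^{-1}_{H^\vee,\mathbf 1}\bigl(\mathcal{O}(\pi^\vee_{w_\lambda})\bigr)=\pi^\vee_{w_\lambda}.
\]
This equality uses that $\pi^\vee_{w_\lambda}$ is special, so its Springer image carries the trivial local system.

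\emph{Step 3 (matching).} It remains to show
\[
\tau_{G^\vee,G}\bigl(j_{W(H^\vee)}^{W(G^\vee)}\pi^\vee_{w_\lambda}\bigr)=j_{W(H)}^{W(G)}\pi_{w_\lambda}.
\]
Truncated induction only depends on the Weyl groups acting on their reflection representations, through the fake degree ($b$-invariant) and the multiplicity-one condition. The canonical isomorphism $W(G)\simeq W(G^\vee)$ restricts to the canonical isomorphism $W(H)\simeq W(H^\vee)$ and carries the reflection representation to an isomorphic one, since coroots are rescaled roots. Hence $\tau$ commutes with $j$. Combining this with Steps 1 and 2 proves the theorem.

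The main obstacle is Step 2: it requires invoking correctly the Lusztig--Spaltenstein results, namely that $j$-induction from a Levi realizes orbit induction at the level of Springer representations with trivial local system, and that $d_{\rm LS}$ exchanges saturation and induction. It also requires checking that these apply inside the pseudo-Levi $H^\vee$, which is itself a reductive group for which $L^\vee$ is a genuine Levi, so the standard statements hold there.
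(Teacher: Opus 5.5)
Your argument is correct. The paper does not reprove this statement; it is quoted from \cite[Thm.~1.2]{BGWX}. Your derivation, however, is exactly the one its preliminaries set up: Joseph's formula (Proposition~\ref{2dim}) on one side and Sommers' formula \eqref{E:dSom} on the other, with the $j$-induction split through $W(H^\vee)$, Lusztig--Spaltenstein induction together with the saturation/induction compatibility of $d_{\rm LS}$ applied inside $H^\vee$ to recover $\pi^\vee_{w_\lambda}$, and $\tau$ commuting with $j$-induction.
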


\section{Sommers duality algorithm for classical types}\label{som--duality}

In this section, we  recall Sommers duality in \cite{Sommers} for classical groups of types other than $A$, and then recall the Sommers duality algorithm given in \cite{BGWX} to compute the annihilator varieties of highest weight modules for classical Lie algebras. 

\subsection{Sommers duality for classical groups}
Let $\parti(m)$ denote the set of partitions $\mathbf{p} = [p_1, \dots, p_k]$ of $m$. Then the set of nilpotent orbits of $\mathfrak{g}$ is in bijection with $\parti(n)$ when $\mathfrak{g}$ is of type $A_{n-1}$. We denote by $\parti_C(2n)$ (resp. $\parti_D(2n)$) the subset of $\parti(2n)$ that corresponds to the nilpotent orbits of type $C$ (resp. type $D$). Similarly, $\parti_B (2n+1)$ denotes the subset of $\parti(2n+1)$ corresponding to the nilpotent orbits of type $B$. We also use $\parti^{ sp}_X(m)$ to denote the partitions of  $m$  corresponding to the special  nilpotent orbits of type $X$.

Let $\mathcal{O}\in \mathcal{N}(\mathfrak{g})$ be a nilpotent orbit and let $C\subset A(\mathcal{O})$ be a conjugacy class. 
Choose $x \in \mathcal{O}$ and let $s \in Z_G(x)$ be a semisimple element whose image in $A(x) \cong A(\mathcal{O})$ belongs to  $C$. 
Set $\mathfrak{l} = Z_{\mathfrak{g}}(s)$. 
We may assume that $\mathfrak{l}$ contains $\operatorname{Lie}(T)$ for some maximal torus $T$; then $\mathfrak{l}$ is a pseudo-Levi subalgebra of $\mathfrak{g}$. 
It is always possible to select $s$ so that $\mathfrak{l}$ has the same semisimple rank as $\mathfrak{g}$, and we shall do so. 
Next we write $\mathfrak{l} = \mathfrak{l}_1 \oplus \mathfrak{l}_2$, where $\mathfrak{l}_2$ is a semisimple subalgebra of the same type as $\mathfrak{g}$ and $\mathfrak{l}_1$ is a simple Lie algebra (possibly zero) that contains the root space corresponding to the lowest root of $\mathfrak{g}$ (the extra node in the extended Dynkin diagram). 
Then $\mathfrak{l}_1$ (if non-zero) is of type $D$ when $\mathfrak{g}$ is of type $B$, of type $C$ when $\mathfrak{g}$ is of type $C$, and of type $D$ when $\mathfrak{g}$ is of type $D$. 
Write $x = x_1 + x_2$ with $x_1 \in \mathfrak{l}_1$ and $x_2 \in \mathfrak{l}_2$. 
Finally, we may adjust the choice of $s$ so that $x_1$ is a distinguished nilpotent element in $\mathfrak{l}_1$. 
Then we can associate with $(\mathcal{O}, C)$ a pair of partitions $(\mathbf{v}, {\mathbf{h} })$, where $\mathbf{v}$ is the partition of $x_1$ in $\mathfrak{l}_1$ and $\mathbf{h}$ is the partition of $x_2$ in $\mathfrak{l}_2$.
We have
\begin{equation} \label{pairs}
\begin{aligned}
\mathbf{v} \in \parti_D (2k), \ & \mathbf{h} \in \parti_B (2n+1-2k) & \quad \text{ in type $B_n$} \\
\mathbf{v} \in \parti_C (2k), \ & \mathbf{h} \in \parti_C (2n-2k) &  \quad \text{ in type $C_n$} \\
\mathbf{v} \in \parti_D (2k), \ & \mathbf{h} \in \parti_D (2n-2k) &  \quad \text{ in type $D_n$} \\
\end{aligned}
\end{equation}
where $0 \leq k \leq n$ and 
$\mathbf{v}$ is a distinguished partition.
The partition $\mathbf{p}$ of $\mathcal{O}$ consists of all the parts in $\mathbf{v}$ and $\mathbf{h}$.
We denote this partition by $\mathbf{p} = \mathbf{v} \cup \mathbf{h}$.

Given $\mathbf{p} =[p_1, \dots,  p_{k-1},  p_k] \in \parti_B (2n+1)$, let $\mathbf{p}^{-} = [p_1, \dots,  p_{k-1}, p_k -1 ]$ and set $\mathbf{p}^C = (\mathbf{p}^{-})_C$.
Then $\mathbf{p}^C \in \parti^{sp}_C(2n)$.
Similarly, given $\mathbf{p} =[p_1,  \dots,  p_k] \in \parti_C (2n)$, let $\mathbf{p}^{+} = [p_1 +1,  \dots, p_{k-1},p_k]$
and
$\mathbf{p}_{+} = [p_1, \dots,  p_{k-1},
 p_k, 1 ]$
and set $\mathbf{p}^B = (\mathbf{p}^{+})_{B}$.
Then $\mathbf{p}^B \in \parti^{sp}_B (2n+1)$ 
and we also have 
\begin{equation}\label{lam-B}
    \mathbf{p}^B = ((\mathbf{p}_{+})^{t}_B)^{t}.
\end{equation}
For $\mathbf{p} \in \parti_B (2n+1)$, we
have $(\mathbf{p}^C)^B = (\mathbf{p}^{t}_B)^{t}$; in particular,
if $\mathbf{p}$ is special, $(\mathbf{p}^C)^B = \mathbf{p}$.
Similarly, for $\mathbf{p} \in \parti_C (2n)$, we
have $(\mathbf{p}^B)^C = (\mathbf{p}^{t}_C)^{t}$; in particular,
if $\mathbf{p}$ is special, $(\mathbf{p}^B)^C = \mathbf{p}$.

\begin{Lem}[{\cite[Lem. 3.3]{ac03}}] \label{pm-col}
    The following identities hold:
    \begin{enumerate}
        \item $((\mathbf{p}^-)_C)^t=(((\mathbf{p})^t)^-)_C$ for $\mathbf{p}\in \parti_B (2n+1)$.
        \item $((\mathbf{p}^+)_B)^t=(((\mathbf{p})^t)^+)_B$ for $\mathbf{p}\in \parti_C (2n)$.
        \item $(((\mathbf{p})^t)_D)^t=((\mathbf{p}^+)^-)_C$ for $\mathbf{p}\in \parti_D (2n)$ or $\mathbf{p}^t\in \parti_C (2n)$.
    \end{enumerate}
\end{Lem}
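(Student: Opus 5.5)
The plan is to prove all three identities by one mechanism: pair the order-reversing involution $\mathbf{q}\mapsto\mathbf{q}^t$ on $\parti(m)$ with the extremal characterization of collapse. By the definition of collapse (stated for $D$ and likewise for $B$, $C$), $\mathbf{q}_X$ is the unique maximal $X$-partition dominated by $\mathbf{q}$. Since transposition reverses dominance, $(\mathbf{q}_X)^t$ is then the unique minimal element of $\{\mathbf{r}^t:\mathbf{r}\in\parti_X(m)\}$ that dominates $\mathbf{q}^t$. So for (1) it suffices to prove two things for $\mathbf{q}:=(\mathbf{p}^t)^-$, where $\mathbf{p}\in\parti_B(2n+1)$:
\begin{enumerate}
\item[(i)] $\mathbf{q}_C$ is the transpose of a $C$-partition;
\item[(ii)] every transpose of a $C$-partition that dominates $(\mathbf{p}^-)^t$ also dominates $\mathbf{q}_C$.
\end{enumerate}
Together with $\mathbf{q}_C\geq(\mathbf{p}^-)^t$, which follows from $\mathbf{q}\geq(\mathbf{p}^-)^t$ and collapsing, these give the identity.

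First I will make the two sides concrete. The operation $\mathbf{p}^-$ removes the last box of the last row of $\mathbf{p}$. The operation $(\mathbf{p}^t)^-$ removes, in the picture of $\mathbf{p}$, the bottom box of the last column. For a $B$-partition the last column has length $m_{\ge p_1}$, the number of parts equal to $p_1$. These two boxes differ in general, and this is exactly where the hypothesis $\mathbf{p}\in\parti_B(2n+1)$ must enter. I will decompose $\mathbf{p}$ into maximal blocks of equal parts. Since even parts of $\mathbf{p}$ have even multiplicity, the columns of $\mathbf{p}$, i.e. the parts of $\mathbf{p}^t$, come in runs whose parity pattern is controlled by the block structure. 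Transposes of $C$-partitions are characterized dually: for each odd value, the corresponding pair of adjacent columns must have equal length. I will prove this dual characterization first as a small sublemma, since it makes (i) a direct check.

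Next I will describe the $C$-collapse in the local form of the definition. Each collapse step moves one box from the last occurrence of an offending odd part $q$ down to the first part $r<q-1$. In the transposed picture this moves one box from one column to an earlier, longer column. I then plan to compare the two sides block by block, using induction on the number of collapse steps. After the first step performed on $\mathbf{p}^-$, the remaining partition is again of the form (something in $\parti_B$)$^-$ on a smaller range of rows, up to a common stable part. The identity should therefore reduce to checking the first step, where $\mathbf{p}^-$ and $(\mathbf{p}^t)^-$ differ by the position of a single box. This local check, carried out on a fixed block of equal parts and splitting into cases by the parity of the part and of its multiplicity, is the step I expect to be the main obstacle. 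The bookkeeping of where the transposed collapse lands must match exactly, and I would first try the smallest cases ($\mathbf{p}=[p_1^{a}]$ and two-block partitions) to fix the pattern before writing the general induction.

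Identity (2) is the same argument with the roles of $B$ and $C$ exchanged: $\mathbf{p}^+$ adds a box to the first row, while $(\mathbf{p}^t)^+$ adds a box to the first column, i.e. a new part $1$ of $\mathbf{p}$. The relation \eqref{lam-B}, $\mathbf{p}^B=((\mathbf{p}_+)^t_B)^t$, is precisely the shape of statement needed, so I expect (2) to follow by the same extremal argument. Alternatively it can be derived from (1) via $(\mathbf{p}^B)^C=(\mathbf{p}^t_C)^t$.

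For (3), I first note that for $\mathbf{p}\in\parti_D(2n)$ the partition $(\mathbf{p}^+)^-$ adds a box to the first row and removes one from the last row. I will apply (2) to a $C$-partition closely related to $\mathbf{p}$, namely its $C$-collapse, or $\mathbf{p}$ itself in the case $\mathbf{p}^t\in\parti_C$. Combining this with the same extremal characterization, now for the set of transposes of $D$-partitions, should reduce (3) to (1) and (2).
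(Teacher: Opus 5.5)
The paper gives no argument here; it quotes the lemma from \cite[Lem.~3.3]{ac03}. So your proposal has to stand on its own, and at present the decisive step is missing. Write $L=((\mathbf{p}^-)_C)^t$, $\mathbf{q}=(\mathbf{p}^t)^-$ and $R=\mathbf{q}_C$. The two extremal descriptions are: $R$ is the largest type-$C$ partition below $\mathbf{q}$, and $L$ is the smallest transpose of a type-$C$ partition above $(\mathbf{p}^-)^t$. These only ever turn membership facts into the \emph{same} inequality $L\le R$. Your (i) plus $R\ge(\mathbf{p}^-)^t$ gives $R\ge L$, and so would ``$L$ is of type $C$ and $L\le\mathbf{q}$''. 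The reverse inequality is exactly your (ii), and nothing in the extremal framework produces it. You defer it to a block-by-block ``local check'' that you flag as the main obstacle and do not carry out.

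That check genuinely requires knowing the shape of $\mathbf{q}_C$. For instance, you cannot show that every admissible $\mathbf{s}$ dominates $\mathbf{q}$ itself: for $\mathbf{p}=[2,2,1]$, $\mathbf{s}=[2,2]$ is admissible while $\mathbf{q}=[3,1]$. The induction you sketch is also not set up. After transposing, a $C$-collapse step on $\mathbf{p}^-$ moves one box of $(\mathbf{p}^-)^t$ from row $q$ up to row $r+1<q$. That is not a collapse step of anything on the right-hand side, so induction on the number of steps on one side does not by itself relate the two sides. The claim that after one step you are back at ``(a $B$-partition)$^-$ up to a common stable part'' is also unsupported.

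The steps you treat as routine are not routine either.
\begin{enumerate}
\item $\mathbf{q}_C\ge(\mathbf{p}^-)^t$ does not follow from $\mathbf{q}\ge(\mathbf{p}^-)^t$ ``and collapsing''. Collapsing lowers $\mathbf{q}$, so the inference needs $(\mathbf{p}^-)^t$ to be of type $C$. This fails, e.g., for $\mathbf{p}=[4,4,3,2,2]$, where $(\mathbf{p}^-)^t=[5,4,3,2]$, $\mathbf{q}=[5,5,3,1]$ and $\mathbf{q}_C=[5,5,2,2]$. The inequality happens to hold (with equality at the third partial sum), but it is not formal.
\item (i) is not a direct check. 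It asserts that $\mathbf{q}_C$ is special, which again needs $\mathbf{q}_C$ explicitly. Also, the usable dual criterion is a parity condition, not equality of lengths: $\mathbf{s}^t\in\parti_C$ iff $s_{2i-1}\equiv s_{2i}\pmod 2$ for all $i$, since the multiplicity of $j$ in $\mathbf{s}^t$ is $s_j-s_{j+1}$.
\item For (2), note that $(\mathbf{p}^t)^+=(\mathbf{p}_+)^t$, so (2) \emph{is} \eqref{lam-B}, not a consequence of it. Both \eqref{lam-B} and $(\mathbf{p}^B)^C=(\mathbf{p}^t_C)^t$ are quoted in the paper without proof and are themselves consequences of this circle of identities. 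Deriving (2) from them is therefore circular.
\item For (3), you do not say how the $D$-collapse of $\mathbf{p}^t$ is to be turned into a $C$-collapse of $(\mathbf{p}^+)^-$, which is the whole content.
\end{enumerate}

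What is missing throughout is an explicit handle on the collapses. One option is to cut the partition at places the collapse never crosses, as in the splitting lemma \cite[Lem.~3.1]{ac03} used in the proofs of Lemmas \ref{lem:collapse-even-shift} and \ref{lem:2}, and then compute both sides block by block.
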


 We denote by $\mathcal{N}_{o,c}$ the set of pairs $(\mathcal{O}, C)$ 
consisting of an orbit $\mathcal{O} \in \mathcal{N}(\mathfrak{g})$ 
and a conjugacy class $C \subset A(\mathcal{O})$. We have the following result.

\begin{Thm}[{\cite[Thm. 12]{Sommers}}] \label{recipe}
The duality map $d_{\text{Som}}: \mathcal{N}_{o,c} \to { \mathcal{N}(\mathfrak{g}^{\vee})}$ sends the pair 
$(\mathbf{v}, \mathbf{h})$ to the orbit $\mathbf{q}$ according to 
the following recipe:

\begin{equation} \mathbf{q} = \begin{cases}
(\mathbf{v} \cup \mathbf{h}^C)^{t}_C, & \text{$\mathfrak{g}$ type $B$} \\
(\mathbf{v} \cup \mathbf{h}^B)^t_B, & \text{$\mathfrak{g}$ type $C$} \\
(\mathbf{v} \cup (\mathbf{h}^t_D)^t)^{t}_D, & \text{$\mathfrak{g}$ type $D$.} \\
\end{cases}
\end{equation}
\end{Thm}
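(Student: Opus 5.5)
The plan is to derive the recipe from Sommers' description \eqref{E:dSom} of $d_{\rm Som}$ as a composite of the Lusztig--Spaltenstein map, the Springer correspondence, $j$-induction and $\tau$. Each factor of that composite will be computed combinatorially, and the resulting partition will be identified with the stated formula. I treat type $B_n$ in detail; types $C_n$ and $D_n$ follow the same pattern with the roles of $B$ and $C$ collapses interchanged (and with $D$ collapses throughout for type $D$).

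\emph{Reduction to pseudo-Levi data.} Given $(\mathcal O,C)$ with pair $(\mathbf v,\mathbf h)$, the pseudo-Levi is $\mathfrak l=\mathfrak l_1\oplus\mathfrak l_2$, of type $D_k\times B_{n-k}$ in type $B_n$. The element $x_1$ is distinguished in $\mathfrak l_1$, but $x_2$ need not be distinguished in $\mathfrak l_2$. I first replace $\mathfrak l_2$ by a Levi subalgebra $\mathfrak m_2\subseteq\mathfrak l_2$ in which $x_2$ is distinguished (Bala--Carter). This gives a pair $(M,\mathcal O_M)$ with $\mathcal O_M$ distinguished and $\mathfrak m=\mathfrak l_1\oplus\mathfrak m_2$ still pseudo-Levi. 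Since $j$-induction is transitive and $j$ from a Levi subalgebra is dual (via Lusztig--Spaltenstein) to saturation/induction, the factor $\mathfrak l_2$ contributes exactly $\mathcal O_{\rm Spr}\circ\tau\circ j\circ{\rm Spr}^{-1}_{\mathbf 1}\circ d_{\rm LS}$ applied to $\mathbf h$ inside $B_{n-k}$. This is the Barbasch--Vogan dual $d_{\rm BV}(\mathbf h)=(\mathbf h^t)_C$-type data on the dual side. Using $(\mathbf h^t_B)^t=(\mathbf h^C)^B$ and Lemma~\ref{pm-col}(1), I rewrite this as the transpose of $\mathbf h^C$, up to the final collapse.

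\emph{The $j$-induction step.} On the factor $\mathfrak l_1$ of type $D_k$, the orbit $\mathbf v$ is distinguished, hence has distinct odd parts and is therefore special. So $d_{\rm LS}^{D}(\mathbf v)=(\mathbf v^t)_D$, and ${\rm Spr}^{-1}_{\mathbf 1}$ of it is the special representation attached to $\mathbf v^t$. I will encode all representations by Lusztig symbols. The $j$-induction from $W(D_k)\times W(B_{n-k})$ to $W(B_n)$ is computed on symbols by entrywise addition of the (suitably normalized) symbols; this is the standard Lusztig--Macdonald--Spaltenstein rule for $j$-induction between classical groups. Applying the Springer map of type $C_n$ (after $\tau$) then translates the summed symbol back into a partition. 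On partitions, entrywise addition of symbols corresponds to taking the union of columns, that is, to $\cupcol$ followed by a collapse. This yields $(\mathbf v^t\cupcol(\mathbf h^C)^t)^{\,}_C$-type data, and after transposing we get $(\mathbf v\cup\mathbf h^C)^t_C$.

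\emph{Main obstacle.} The main obstacle is this symbol computation. It requires checking that the symbol sum really is the symbol of the collapse of the union, including the shift by $\pm1$ hidden in the $\mathbf p^{-}$, $\mathbf p^{+}$ operations, which arises from the mismatch between $D_k$ (no extra box) and $B_{n-k}$ (one extra box). My first approach is to match the ``$\beta$-set'' description of symbols with the even/odd box (Hollow diagram) description of partitions, so that the union of columns and the $\pm1$ shift become visible box by box. Lemma~\ref{pm-col} is then used to commute transpose, collapse and $\pm$ as needed. For type $D$ there is the additional issue of very even labels. I would settle it by tracking the numeral through the symbol (degenerate symbols) rather than through the partition.
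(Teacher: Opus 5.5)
The paper gives no proof of this statement. It is quoted from Sommers and used as a black box, so your outline has to stand on its own. Your reduction step is sound. Passing to a Levi $\mathfrak m_2\subseteq\mathfrak l_2$, and using transitivity of $j$-induction together with $j_{W(\mathfrak m_2)}^{W(\mathfrak l_2)}\circ{\rm Spr}^{-1}_{\mathbf 1}\circ d^{\mathfrak m_2}_{\rm LS}={\rm Spr}^{-1}_{\mathbf 1}\circ d^{\mathfrak l_2}_{\rm LS}\circ{\rm sat}$, reduces \eqref{E:dSom} in type $B_n$ (after applying $\tau$) to finding the orbit of $j^{W(C_n)}_{W(D_k)\times W(C_{n-k})}\bigl({\rm Spr}^{-1}_{\mathbf 1}((\mathbf v^t)_D)\boxtimes{\rm Spr}^{-1}_{\mathbf 1}(d_{\rm BV}(\mathbf h))\bigr)$. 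Lemma~\ref{pm-col}(1) gives exactly $d_{\rm BV}(\mathbf h)=((\mathbf h^t)^-)_C=(\mathbf h^C)^t$. So the $-1$ shift on $\mathbf h$ is already produced by Barbasch--Vogan duality on the $\mathfrak l_2$-factor; it is not what the symbol computation has to explain.

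The gap is in your second step. There you pass from $d^D_{\rm LS}(\mathbf v)=(\mathbf v^t)_D$ to ``the special representation attached to $\mathbf v^t$'' and feed $\mathbf v^t$ into a column-union-and-collapse rule. But $\mathbf v^t$ is never a $D$-partition: writing $\mathbf v=[v_1>\cdots>v_{2r}]$, the even part $2r$ of $\mathbf v^t$ occurs $v_{2r}$ times, an odd number. So no representation of $W(D_k)$ is attached to it. The representation actually being induced is labelled by $(\mathbf v^t)_D$, and your rule applied to that label gives the wrong answer. Take $n=k=2$, $\mathbf v=[3,1]$, $\mathbf h=[1]$:
\begin{itemize}
\item $(\mathbf v^t)_D=[1^4]$, so the $W(D_2)$-representation is ${\rm sgn}$.
\item $j^{W(C_2)}_{W(D_2)}({\rm sgn})$ is the character spanned by $x_1^2-x_2^2$ (trivial on sign changes, sign on $S_2$). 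Its Springer orbit is the minimal orbit $[2,1,1]$ of $\mathfrak{sp}_4$, as the recipe predicts.
\item Column union and collapse of the labels $[1^4]$ and $\varnothing$ gives $[1^4]$ instead.
\end{itemize}

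By Lemma~\ref{pm-col}, the recipe reads $\mathbf q=(\mathbf v^t\cupcol d_{\rm BV}(\mathbf h))_{X^\vee}$ in all three types, with $X^\vee$ the type of $\mathfrak g^\vee$. The $\mathfrak l_2$-factor enters through its dual Springer label, but the distinguished factor enters through $\mathbf v^t$, not through $\tau(d_{\rm LS}(\mathbf v))$. What you must prove is therefore
\[
\mathcal O^{C_n}_{\rm Spr}\Bigl(j^{W(C_n)}_{W(D_k)\times W(C_{n-k})}\bigl({\rm Spr}^{-1}_{\mathbf 1}((\mathbf v^t)_D)\boxtimes{\rm Spr}^{-1}_{\mathbf 1}(\mathbf b)\bigr)\Bigr)=(\mathbf v^t\cupcol\mathbf b)_C,\qquad \mathbf b=d_{\rm BV}(\mathbf h).
\]
In words, $j$-induction from the non-parabolic subgroup $W(D_k)$ must undo the $D$-collapse in $d_{\rm LS}(\mathbf v)$. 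This is the whole content of the theorem in type $B$. Your sketch (matching $\beta$-sets with Hollow diagrams, and attributing the shift to $D_k$ versus $B_{n-k}$) does not address it.

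Two further ingredients are missing.
\begin{itemize}
\item The induced representation is in general not special; $[2,1,1]$ above is not. Reading off its orbit therefore needs Lusztig's description of $\mathcal O_{\rm Spr}$ on arbitrary symbols (orbits correspond to similarity classes of symbols). The special-partition dictionary of the H-algorithm is not enough.
\item Types $C$ and $D$ are not letter swaps of type $B$. Type $C$ needs the analogous identity for $W(B_k)\times W(B_{n-k})\subset W(B_n)$: two defect-$1$ symbols, with $\mathbf v^t$ of even size set against the odd-size label $\tau(d_{\rm LS}(\mathbf v))=(\mathbf v^t)^B$. Type $D$ needs it for $W(D_k)\times W(D_{n-k})\subset W(D_n)$.
\end{itemize}
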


Note that the case in which $\mathbf{v}$ is the empty partition corresponds to Lusztig--Spaltenstein duality.  In type $D$, if $\mathbf{v}$ is non-empty, our assumptions on  $(\mathbf{v}, \mathbf{h})$ ensure that $\mathbf{h}$ is not very even. 

\subsection{Sommers duality algorithm}

In \cite{BXX}, by applying the RS algorithm to $x\in  \mathrm{Seq}_n (\Gamma)$, we obtain a Young tableau $P(x)$ and a partition ${\bf p}(x)=\mathrm{sh}(P(x))=p(x)=[p_1,p_2,...,p_N]$, where $p_i$ is the number of boxes in the $i$-th row of $P(x)$.  
Recall from \cite{BMW} that we obtain a special partition $H(p(x^-))$ from the given Young tableau $P(x^-)$ or the partition ${\bf p}(x^-)=p(x^-)$ by using the H-algorithm defined in \cite{BMW}. We denote this partition by $p_{X}(x^-)^s$ when it is a special partition of type $X$ (for $X=B,C$ or $D$). Roughly speaking, $H(p(x^-))=p_{X}(x^-)^s$ is the unique  special partition which has the same odd (resp. even) boxes for types $B$ and $C$ (resp. type $D$) as the partition $p(x^-)$.

Recall that  $\mathfrak{g}$ is the Lie algebra of $G$, and let $\mathfrak{h}$ be a Cartan subalgebra.
Let $\Phi^+\subset\Phi$ be the set of positive roots determined by a Borel subalgebra $\mathfrak{b}$ of $\mathfrak{g}$. We have a Cartan decomposition $\mathfrak{g}=\mathfrak{n}\oplus\mathfrak{h}\oplus\mathfrak{n}^-$ with $\mathfrak{b}=\mathfrak{n}\oplus \mathfrak{h}$. Denote by $\Delta$ the set of simple roots in $\Phi^+$.

Assume $\Phi=B_n$, $C_n$ or $D_n$. Fix $\lambda\in\mathfrak{h}^*\simeq\mathbb{C}^n$. Recall that $[\lambda]=(\lambda)_{(0)} \cup (\lambda)_{(\frac{1}{2})}\cup [\lambda]_3$ with $[\lambda]_3=\{{\lambda}_{{Y}_1},\dots,{\lambda}_{{Y}_m}\}$.
Now we describe a decomposition of the root system $ \Phi_{[\lambda]}=\{\alpha\in\Phi\mid\langle\lambda, \alpha^\vee\rangle\in\mathbb{Z}\}\subset\Phi $
into some orthogonal subsystems.

Let $K_{(z)}:=\{i\leq n\mid\lambda_i\in z+\mathbb{Z}\}$ for $z\in\mathbb{C}$.  
For each $ z\in \mathbb{C} $, we define a set $\Phi_{(z)}$  as follows.

If $z\not\in\frac{1}{2}\mathbb{Z}$, we set
\[
\Phi_{(z)}:=\{\ep_i-\ep_j, \pm(\ep_j+\ep_k), \ep_k-\ep_l\mid i, j\in K_{(z)}, k, l\in K_{(-z)}, i\neq j, k\neq l\}.
\] Thus $\Phi_{(z)}$ corresponds to some $\lambda_{Y_i}\in[\lambda]_3$.

If $\Phi=B_n$ and $z=0$ or $\frac{1}{2}$, we set
\begin{equation*}\label{inteq11}
	\Phi_{(z)}:=\{\pm(\ep_i\pm \ep_j), \pm \ep_k\mid i, j, k\in K_{(z)}\ \mbox{and}\ i<j\}.
\end{equation*}

If $\Phi=C_n$ and $z=0$, we set
\begin{equation*}\label{inteq2}
	\Phi_{(0)}:=\{\pm(\ep_i\pm \ep_j), \pm 2\ep_k\mid i, j, k\in K_{(0)}\ \mbox{and}\ i<j\}.
\end{equation*}

If $\Phi=C_n$ and $z=\frac{1}{2}$, we set
\begin{equation*}
	\Phi_{(0.5)}:=\{\pm(\ep_i\pm \ep_j)\mid i, j\in K_{(0.5)}\ \mbox{and}\ i<j\}.
\end{equation*}

If $\Phi=D_n$ and $z=0$ or $\frac{1}{2}$, we set
\begin{equation*}
	\Phi_{(z)}:=\{\pm(\ep_i\pm \ep_j)\mid i, j\in K_{(z)}\ \mbox{and}\ i<j\}.
\end{equation*}

Thus $\Phi_{(0)}$ corresponds to $(\lambda)_{(0)}$ and $\Phi_{(\frac{1}{2})}$ corresponds to  $ (\lambda)_{(\frac{1}{2})}$.

For $z\in\mathbb{C}$, let $K_{(z)}=\{i_1<\cdots<i_r\}$ and $K_{(-z)}=\{j_1<\cdots<j_s\}$. Suppose that $ \Phi_{(z)} \neq \emptyset$.
If $z\not\in\frac{1}{2}\mathbb{Z}$, then $\Phi_{(z)}$ is generated by the simple roots 
\begin{equation*}\label{ninteq1}
\{  \ep_{i_k}-\ep_{i_{k+1}} \mid 1\leq k<r  \}\cup \{ \ep_{i_r}+\ep_{j_{s}}\}\cup \{ \ep_{j_l}-\ep_{j_{l+1}} \mid 1\leq l<s     \},
\end{equation*}
so $\Phi_{(z)}$ is a root system of type $A_{r+s-1}$. If $\Phi=B_n$ (resp. $ D_n $) and $z\in\frac{1}{2}\mathbb{Z}$, $ \Phi_{(z)} $ is a root system of type $ B $ (resp. $ D $). 
If $\Phi=C_n$ and $z\in\mathbb{Z}$ (resp. $ z\in\frac{1}{2}+\mathbb{Z}$), $ \Phi_{(z)} $ is a root system of type $ C$ (resp. $ D $). Note that $ C_1=B_1=A_1 $, $ D_3=A_3 $, $ D_2=A_1\times A_1 $. Hence  $ \Phi_{(z)} $  is irreducible except when it is of type $ D_2 $.

As usual, $\Ree(z)$ denotes the real part of a complex number $z$.

\begin{Prop}[{\cite[Prop. 5.6]{BXX}}]\label{rclem5}
	Fix $\Phi=B_n, C_n$ or $D_n$. For $\lambda\in\mathfrak{h}^*$,
	\[
	\Phi_{[\lambda]}=\bigsqcup_{0\leq\Ree(z)\leq 1/2}\Phi_{(z)}.
	\]
Moreover, $ \Phi_{(z)} $, $ 0\leq\Ree(z)\leq 1/2 $, are mutually orthogonal.

\end{Prop}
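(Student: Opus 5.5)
The plan is to unwind the definition of $\Phi_{[\lambda]}$ root by root, using the explicit coroots of $B_n$, $C_n$, $D_n$, and to sort the integral roots according to the $\mathbb{Z}$-cosets of the coordinates $\lambda_i$ involved.

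First I list the integrality conditions. For the long roots $\pm(\ep_i-\ep_j)$ and $\pm(\ep_i+\ep_j)$ (all types), the coroot is the root itself. So these roots lie in $\Phi_{[\lambda]}$ if and only if $\lambda_i-\lambda_j\in\bZ$, respectively $\lambda_i+\lambda_j\in\bZ$. For the remaining roots:
\begin{itemize}
\item in type $B_n$, the short root $\pm\ep_k$ has coroot $\pm 2\ep_k$, so it is integral iff $2\lambda_k\in\bZ$, i.e.\ $\lambda_k\in\bZ\cup(\tfrac12+\bZ)$;
\item in type $C_n$, the long root $\pm2\ep_k$ has coroot $\pm\ep_k$, so it is integral iff $\lambda_k\in\bZ$;
\item type $D_n$ has no further roots.
\end{itemize}

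Next I index the coordinates. Consider the equivalence on $\bC$ generated by $w\sim w+1$ and $w\sim -w$. Each class has a representative $z$ with $0\le \Ree(z)\le 1/2$, and this representative is unique up to replacing $z$ by $-z$ (mod $\bZ$). That ambiguity only occurs when $\Ree(z)\in\{0,\tfrac12\}$, and it is harmless: the definition of $\Phi_{(z)}$ is symmetric under $z\leftrightarrow -z$ (swap the roles of $K_{(z)}$ and $K_{(-z)}$), so $\Phi_{(z)}=\Phi_{(-z)}$. The index set $I_z:=K_{(z)}\cup K_{(-z)}$ depends only on the class, and the sets $I_z$ for distinct classes partition $\{1,\dots,n\}$.

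I then check that every integral root has both indices in a single $I_z$ and lies in $\Phi_{(z)}$.
\begin{itemize}
\item If $\ep_i-\ep_j$ is integral, then $\lambda_i,\lambda_j$ lie in the same coset.
\item If $\ep_i+\ep_j$ is integral, then $\lambda_i\in z+\bZ$ and $\lambda_j\in -z+\bZ$.
\end{itemize}
In either case $i,j\in I_z$. I then compare with the definition of $\Phi_{(z)}$ in each case.
\begin{itemize}
\item For $z\notin\tfrac12\bZ$, the sets $K_{(z)}$ and $K_{(-z)}$ are disjoint. The integral roots are exactly $\ep_i-\ep_j$ with $i,j$ in the same $K$, and $\pm(\ep_j+\ep_k)$ with $j\in K_{(z)}$, $k\in K_{(-z)}$. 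No short root $\pm\ep_k$ in type $B$, and no root $\pm2\ep_k$ in type $C$, is integral, since $\lambda_k\notin\tfrac12\bZ$.
\item For $z\in\{0,\tfrac12\}$ we have $K_{(z)}=K_{(-z)}$, and all $\pm\ep_i\pm\ep_j$ with $i,j\in K_{(z)}$ are integral. The extra roots are $\pm\ep_k$ for $k\in K_{(z)}$ in type $B$ (both for $z=0$ and $z=\tfrac12$), and $\pm2\ep_k$ in type $C$ only when $z=0$.
\end{itemize}
These are precisely the displayed definitions, so $\Phi_{[\lambda]}=\bigcup_z\Phi_{(z)}$. Conversely, each listed root is integral by the same computation.

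Finally, disjointness and orthogonality both follow from supports. Every root in $\Phi_{(z)}$ is a combination of $\ep_i$ with $i\in I_z$, and the $I_z$ are pairwise disjoint for distinct classes. Hence the $\Phi_{(z)}$ are disjoint, and any two roots from different $\Phi_{(z)}$ have disjoint supports, so their inner product is zero.

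The only delicate point is the bookkeeping of representatives when $\Ree(z)\in\{0,\tfrac12\}$: the union has to be read over classes, with $z$ and $-z$ identified. I would state that convention explicitly at the start.
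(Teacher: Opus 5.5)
Your proposal is correct. The paper gives no proof of its own here (it only quotes \cite[Prop.~5.6]{BXX}), and your root-by-root integrality check is the natural direct verification: you sort the indices by the class of $\lambda_i$ under $w\sim w+1$, $w\sim -w$, then read off disjointness and orthogonality from disjoint supports.

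Two small remarks. First, in type $C_n$ the roots $\pm\ep_i\pm\ep_j$ are the \emph{short} roots, not the long ones; their coroots still equal the roots, so the computation is unaffected. Second, your convention of indexing the union by classes, identifying $z$ with $-z$ modulo $\bZ$ when $\Ree(z)\in\{0,\tfrac12\}$, is genuinely needed for the literal disjointness claim. Without it, for instance $\Phi_{(iy)}=\Phi_{(-iy)}$ with $0\neq y\in\mathbb{R}$ would be counted twice whenever it is nonempty.
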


Suppose that $\Phi_{[\lambda]}$ decomposes into a direct product of irreducible root systems:
$$\Phi_{[\lambda]}=\Phi_{[\lambda]_1}\times \Phi_{[\lambda]_2}\times \cdots \times \Phi_{[\lambda]_{k+2}}\simeq \Phi_1\times \Phi_2\times \cdots\times \Phi_{k+2} \,,$$ 
where $\Phi_{[\lambda]_1}=\Phi_{(0)}\simeq \Phi_1$, $\Phi_{[\lambda]_2}=\Phi_{(1/2)}\simeq \Phi_2$, and $\Phi_{[\lambda]_j}=\Phi_{(z_j)}\simeq \Phi_{j}$ is a root system of type $A$ for $3\leq j\leq k+2$ and some $z_j \not\in\frac{1}{2}\mathbb{Z}$. We denote such an isomorphism by $\phi$.  
From the isomorphism  $\phi$, we can write 
$$\lam'=\phi(\lam|_{\mathfrak{h}_{\Phi_{[\lambda]}}})=\prod\limits_{1\leq i\leq k+2} \phi(\lambda|_{\mathfrak{h}_{\Phi_{[\lambda]}}})|_{{\Phi_i}}=\prod\limits_{1\leq i\leq k+2} \lam'|_{{\Phi_i}}.$$
Then ${\lam}'|_{\Phi_i}:={\lam}'|_{{\mathfrak{h}_{\Phi_{i}}}}=\phi(\lambda|_{\mathfrak{h}_{\Phi_{[\lambda]_i}}})$ is an integral weight of type $\Phi_i$. In particular ${\lam}'|_{\Phi_1}=(\lambda)_{(0)}$,  ${\lam}'|_{\Phi_2}=(\lambda)_{(\frac{1}{2})}$ and ${\lam}'|_{\Phi_{i+2}}=\lambda_{Y_i}$. 

 From $\lambda_{Y_i}$ for $1\leq i\leq k$, we obtain a special partition ${\bf p}_{i}:=p(\tilde{\lambda}_{Y_i})$ of type $A$ which comes from the Young tableau $P(\tilde{\lambda}_{Y_i})$.
Suppose $\Phi_i$ is of classical type $X_i$ ($X_i=B,C$ or $D$) for $i=1,2$. From $x= (\lambda)_{(0)}$ or $ (\lambda)_{(\frac{1}{2})}$,
 by using the H-algorithm in \cite{BMW}, we obtain two special partitions ${\bf q}_{i}:=p_{X_i}(x^-)^s$ of type $X_i$ and  ${\bf q}^{\vee}_{i}:=p_{X_i^{\vee}}(x^-)^s$ of type $X_i^{\vee}$, both of which come from the Young tableau $P(x^-)$. 

For classical types, we identify a special partition ${\bf p}$ (corresponding to a special nilpotent orbit $\mathcal{O}_w$) with its corresponding special representation $\pi_w=\pi_{\bf p}$ of the Weyl group via the Springer correspondence. We write ${\bf p}^t$ for the transpose of a partition ${\bf p}$. Also, for a partition ${\bf p}$ of classical $X$-type, one has $d_{\rm LS}({\bf p}) = {\bf p}^t_X$.

From now on, we identify a root subsystem  of $\Phi$ with the corresponding analytic subgroup of the given Lie group $G$. Thus $\Phi$ is identified with $G$. We may write $L\subseteq \Phi$ or $\mathcal{O}_L\subseteq \Phi$ to mean that $L$ is a Levi subgroup of $G$ or $\mathcal{O}_L$ is a distinguished orbit for $L$.

From Theorem \ref{som-alg}, we have the following result.

\begin{Thm}\label{som-alge}
    Keep the notation as above. Suppose the root system of $\mathfrak{g}$ is $\Phi$. Let $L(\lam)$ be a highest weight module of $\mathfrak{g}$. Denote
$H:=\Phi_{[\lambda]}\simeq \Phi_1 \times \Phi_2 \times \cdots \times \Phi_{k+2}$ and  $$\lam'=\phi(\lam|_{\mathfrak{h}_{\Phi_{[\lambda]}}})=\prod\limits_{1\leq i\leq k+2} \phi(\lambda|_{\mathfrak{h}_{\Phi_{[\lambda]}}})|_{{\Phi_i}}=\prod\limits_{1\leq i\leq k+2} \lam'|_{{\Phi_i}}.$$ 
Let $X_i$ denote the type of the root system $\Phi_i$ for $i=1,2$.
Then we have the following:
\begin{enumerate}
    \item Suppose that neither ${\bf q}_{1}$ nor ${\bf q}_{2}$ is a very even partitions of type $D$.   Then $$\pi_{w_{\lam}}={\bf q}_{1}\times {\bf q}_{2}\times \prod\limits_{1\leq i\leq k}{\bf p}_{i} \in {\rm Irr}(W(H)), $$ $$\pi^{\vee}_{w_{\lam}}={\bf q}^{\vee}_1\times {\bf q}^{\vee}_2\times \prod\limits_{1\leq i\leq k}{\bf p}_{i} \in {\rm Irr}(W(H^{\vee})), $$ and 
$$d_{\rm LS}^{H^\vee}(\mathcal{O}(\pi^{\vee}_{w_{\lam}}))=({\bf q}^{\vee}_{1})^t_{X_1^{\vee}}\times ({\bf q}^{\vee}_{2})^t_{X_2^{\vee}}\times \prod\limits_{1\leq i\leq k}{\bf p}_{i}^t = {\rm sat}_{L^\vee}^{H^\vee} (\mathcal{O}_{L^\vee}),$$
where $L^\vee \subset \Phi^\vee$ and $\mathcal{O}_{L^\vee}$ is a distinguished orbit of $L^\vee$.

Then  we have
$$\mathcal{O}_{{\rm Ann}(L(\lambda))} = d_{\rm Som}^{G^\vee}(L^{\vee}, \mathcal{O}_{L^\vee}).$$

\item Suppose that $\Phi$ is of type $D$ and both ${\bf q}_{1}$ and ${\bf q}_{2}$ are  very even partitions of type $D$.  Then we have $\pi_{w_{\lam}}=\pi^{\vee}_{w_{\lam}}={\bf q}_{1}\times {\bf q}_{2}\times\prod\limits_{1\leq i\leq k}{\bf p}_{i} \in {\rm Irr}(W(H)) $ and 
$$d_{\rm LS}^{H^\vee}(\mathcal{O}(\pi^{\vee}_{w_{\lam}}))=d_{\rm LS}^{H}(\mathcal{O}(\pi_{w_{\lam}}))=({\bf q}_{1})^t_{D}\times ({\bf q}_{2})^t_{D}\times \prod\limits_{1\leq i\leq k}{\bf p}_{i}^t = {\rm sat}_{L}^{H} (\mathcal{O}_{L}),$$
where $L \subset \Phi$ and $\mathcal{O}_{L}$ is a distinguished orbit of $L$.

Then  we have
$$\mathcal{O}_{{\rm Ann}(L(\lambda))} = d_{\rm Som}^{G^\vee}(L^{\vee}, \mathcal{O}_{L^\vee})={\bf q}_{1}\cupcol {\bf q}_{2}\cupcol(\cupcol_{ i}2{\bf p}_{i}),$$
and the numeral of $\mathcal{O}_{{\rm Ann}(L(\lambda))}$ is determined by the numerals of ${\bf q}_{1}, {\bf q}_{2}$, and $ {\bf p}_{i}$.
\end{enumerate}
\end{Thm}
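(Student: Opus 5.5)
This theorem is essentially a specialization of Theorem \ref{som-alg}, so the plan is to apply that theorem to the decomposition of Proposition \ref{rclem5}. The key inputs are that everything factors over the orthogonal components $\Phi_{(z)}$, and that in each factor the special representation attached to $w_\lambda$ can be read off from the RS/H-algorithm.

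\emph{Step 1: identify $H^\vee$ and $\pi_{w_\lambda}$.} By Proposition \ref{rclem5}, $\Phi_{[\lambda]}=\Phi_{(0)}\sqcup\Phi_{(1/2)}\sqcup\bigsqcup_j\Phi_{(z_j)}$ is an orthogonal decomposition. Hence $W_{[\lambda]}=\prod_i W(\Phi_i)$. The element $w_\lambda$ is a product of components $w_{\lambda'|_{\Phi_i}}$, and two-sided cells of a product are products of cells. So $\pi_{w_\lambda}=\boxtimes_i\pi_{w_{\lambda'|_{\Phi_i}}}$.

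For each factor I will quote the known cell computations:
\begin{enumerate}
\item for the type-$A$ factors, the special representation is ${\bf p}_i=p(\tilde\lambda_{Y_i})$; this is the classical RS result for $\lambda_{Y_i}$ once it is moved into standard type-$A$ coordinates by $\phi$, which is exactly what $\tilde{x}$ does;
\item for the factors of type $B$, $C$ or $D$, the special representation is ${\bf q}_i=p_{X_i}(x^-)^s$, by \cite{BXX,BMW} via the H-algorithm.
\end{enumerate}
Under $W(H)\simeq W(H^\vee)$ the type-$A$ factors are unchanged and $B\leftrightarrow C$ are exchanged. Because the H-algorithm preserves the odd boxes (for $B$, $C$) or the even boxes (for $D$) of the same tableau $P(x^-)$, the dual representation is ${\bf q}_i^\vee$. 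This gives the formula for $\pi^\vee_{w_\lambda}$.

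\emph{Step 2: compute $d_{\rm LS}^{H^\vee}$.} Lusztig--Spaltenstein duality on a product is computed factor by factor. On a classical factor it is ${\bf p}\mapsto{\bf p}^t_X$, and on a type-$A$ factor it is transposition. This yields $({\bf q}^\vee_1)^t_{X_1^\vee}\times({\bf q}^\vee_2)^t_{X_2^\vee}\times\prod{\bf p}_i^t$.

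Every nilpotent orbit of the pseudo-Levi $H^\vee$ is, by Bala--Carter applied inside $H^\vee$, the saturation of a distinguished orbit $\mathcal{O}_{L^\vee}$ of a Levi $L^\vee\subset H^\vee$. Since $H^\vee$ is a pseudo-Levi of $G^\vee$, the group $L^\vee$ is again a pseudo-Levi of $G^\vee$, which is the form needed to apply $d^{G^\vee}_{\rm Som}$. Theorem \ref{som-alg} then gives $\mathcal{O}_{\Ann(L(\lambda))}=d_{\rm Som}^{G^\vee}(L^\vee,\mathcal{O}_{L^\vee})$, which proves part (1).

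\emph{Step 3: the very even case in type $D$.} Here $\Phi_1,\Phi_2$ are of type $D$, so $H=H^\vee$ and $\pi^\vee_{w_\lambda}=\pi_{w_\lambda}$. The subtle point is that a very even ${\bf q}_i$ corresponds to two orbits, and hence to two special representations. I would use the numeral of the orbit $\mathcal{O}_{w_{\lambda'|_{\Phi_i}}}$ from \cite{BMW} to pin down which one occurs.

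I then plan to compute $d_{\rm Som}$ directly via formula \eqref{E:dSom}. This expresses $d_{\rm Som}$ as the $j$-induction from $W(H)$ of $\pi_{w_\lambda}$ followed by Springer. Equivalently, by Proposition \ref{2dim}, it is $j_{W(H)}^{W}$ of the product representation. For type $D$ factors with very even ${\bf q}_i$ and type $A$ factors, I would use the known combinatorics of $j$-induction in classical types (the symbol description, cf.\ \cite{Ca85}). With it I expect to show the following: inducing the product gives the representation whose partition is obtained by adding columns, ${\bf q}_1\cupcol{\bf q}_2\cupcol(\cupcol_i 2{\bf p}_i)$, where each type-$A$ factor $W(A_{r-1})\subset W(D_r)$ contributes $2{\bf p}_i$. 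The numeral should multiply according to the parity count of II-labels among the factors.

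\emph{Main obstacle.} I expect Step 3 to be the hardest part, specifically checking that no collapse is needed and tracking the numeral under $j$-induction. I would first verify on very even symbols that $j$-induction of a product of degenerate symbols adds the symbol entries. Adding entries corresponds to $\cupcol$ of the partitions, which is automatically very even, so no collapse is needed. The numeral should then be determined by the parity of the number of II-labels.
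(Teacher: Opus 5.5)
Your Steps 1--2 follow the paper's route. Theorem \ref{som-alge} is obtained by specializing Theorem \ref{som-alg} to the decomposition of Proposition \ref{rclem5}, with the factor representations supplied by the RS/H-algorithm, and this settles part (1).

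The gap is in Step 3, where you assert that $\mathbf{q}_1\cupcol\mathbf{q}_2\cupcol(\cupcol_i 2\mathbf{p}_i)$ is ``automatically very even, so no collapse is needed.'' That is false. Its rows $q_{1,j}+q_{2,j}+2\sum_i p_{i,j}$ are all even. However, only the rows of the very even $\mathbf{q}_1,\mathbf{q}_2$ come in equal pairs; those of $2\mathbf{p}_i$ do not. So even parts can have odd multiplicity, and the result need not be a type-$D$ partition at all. For example, $\lambda=(1,2,\tfrac12,\tfrac32,0.3)$ for $\mathfrak{so}(10,\mathbb{C})$ gives $\mathbf{q}_1=\mathbf{q}_2=[2,2]$ and $\mathbf{p}_1=[1]$. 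Then $\mathbf{q}_1\cupcol\mathbf{q}_2\cupcol 2\mathbf{p}_1=[6,4]$, which is not a nilpotent orbit of $\mathfrak{so}(10,\mathbb{C})$. Your own $j$-induction picture already exposes this. Inducing a type-$A$ factor alone from $S_r$ to $W(D_r)$ is Lusztig--Spaltenstein induction from $\mathfrak{gl}_r$, and it produces $(2\mathbf{p}_i)_D$, e.g.\ $[3,3]$ for $\mathbf{p}_i=[2,1]$. That is neither $2\mathbf{p}_i$ nor degenerate, so a check on degenerate symbols does not cover these factors.

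What is true follows in one line from Theorem \ref{recipe}, which is how the paper treats this case (Remark after Corollary \ref{type-D-som}). If $\mathbf{q}_1,\mathbf{q}_2$ are very even, then so are $\mathbf{q}_1^t,\mathbf{q}_2^t$. Hence every part of $\mathbf{h}$ has even multiplicity, $\mathbf{v}=\varnothing$, and $\mathcal{O}_{\Ann(L(\lambda))}=\mathbf{h}^t_D=\bigl(\mathbf{q}_1\cupcol\mathbf{q}_2\cupcol(\cupcol_i 2\mathbf{p}_i)\bigr)_D$, in agreement with Theorem \ref{pa-suanfa}. In the example this is $[6,4]_D=[5,5]$, which is not very even.

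So the identity in (2) holds only with this $D$-collapse. Without it, it holds only when the rows of $\cupcol_i\mathbf{p}_i$ come in equal pairs, e.g.\ when there are no type-$A$ factors. The printed statement and that Remark both drop the collapse, so no argument can prove (2) exactly as written.

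Finally, even when the answer really is very even, symbol addition cannot detect the numeral. A degenerate symbol is shared by the two representations labelled I and II, so adding symbols does not tell you which one you get. The paper does not derive the parity rule here; it takes it from Theorem \ref{pa-suanfa}. You would need to do the same, or use a labelled refinement of the $j$-induction rule for degenerate symbols.
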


From Proposition \ref{Orbit-dis}, Theorem \ref{recipe} and \cite[\S 3]{Sommers}, we have the following choice of $\mathbf{v}$ and $\mathbf{h}$ to apply Sommers duality in the above Theorem \ref{som-alge}.

\begin{Cor}\label{type-b-som}
    Let $L(\lam)$ be a highest weight module of $\mathfrak{so}(2n+1,\mathbb{C})$ and
		$[\lambda]=(\lambda)_{(0)} \cup (\lambda)_{(\frac{1}{2})}\cup [\lambda]_3$
		with $[\lambda]_3=\{{\lambda}_{{Y}_1},\dots,{\lambda}_{{Y}_m}\}$.
    Denote
 $H:=\Phi_{[\lambda]}$.

Denote $x_1= (\lambda)_{(0)}$ and $ x_2=(\lambda)_{(\frac{1}{2})}$. For each $1\leq i\leq 2$,
  we have two special partitions ${\bf q}_{i}:=p_{B}(x_i^-)^s$ of type $B$ and  ${\bf q}^{\vee}_{i}:=p_{C}(x_i^-)^s$ of type $C$. For $\lambda_{Y_i}\in [\lambda]_3$, we have a special partition ${\bf p}_{i}:=p(\tilde{\lambda}_{Y_i})$ of type $A$.

Then we have 
   $$\pi_{w_{\lam}}={\bf q}_{1}\times {\bf q}_{2}\times \prod\limits_{1\leq i\leq k}{\bf p}_{i} \in {\rm Irr}(W(H)), $$ $$\pi^{\vee}_{w_{\lam}}={\bf q}^{\vee}_1\times {\bf q}^{\vee}_2\times \prod\limits_{1\leq i\leq k}{\bf p}_{i} \in {\rm Irr}(W(H^{\vee})), $$ 
   and 
$$d_{\rm LS}^{H^\vee}(\mathcal{O}(\pi^{\vee}_{w_{\lam}}))=({\bf q}^{\vee}_{1})^t\times ({\bf q}^{\vee}_{2})^t\times \prod\limits_{1\leq i\leq k}{\bf p}_{i}^t = {\rm sat}_{L^\vee}^{H^\vee} (\mathcal{O}_{L^\vee}),$$
where $L^\vee \subset \Phi^\vee$ and $\mathcal{O}_{L^\vee}$ is a distinguished orbit of $L^\vee$.
Then we have the following.
\begin{enumerate}
    \item When    $({\bf q}^{\vee}_{2})^t$ contains  even parts  with odd multiplicity, let $\mathbf{v}$ denote the partition formed by the largest such even parts of   $({\bf q}^{\vee}_{2})^t$, each appearing exactly once. Let $\mathbf{h}$ denote the remaining parts in $({\bf q}^{\vee}_{1})^t\cup ({\bf q}^{\vee}_{2})^t\cup(\bigcup\limits_{1\leq i\leq k}({\bf p}_{i}^t\cup {\bf p}_{i}^t))$.
    \item When   $({\bf q}^{\vee}_{2})^t$ contains no  even parts  with odd multiplicity, let $\mathbf{v}=\varnothing$ and $\mathbf{h}=({\bf q}^{\vee}_{1})^t\cup ({\bf q}^{\vee}_{2})^t\cup(\bigcup\limits_{1\leq i\leq k}({\bf p}_{i}^t\cup {\bf p}_{i}^t))$.
\end{enumerate}

Then  we have
$$\mathcal{O}_{{\rm Ann}(L(\lambda))} = d_{\rm Som}^{G^\vee}(L^{\vee}, \mathcal{O}_{L^\vee})=d_{\rm{Som}}(\mathbf{v},\mathbf{h})=(\mathbf{v} \cup \mathbf{h}^B)^t_B.$$
\end{Cor}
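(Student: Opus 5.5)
The statement is Corollary~\ref{type-b-som}, so I plan to specialize Theorem~\ref{som-alge}(1) to $\mathfrak{g}=\mathfrak{so}(2n+1,\mathbb{C})$. Since $\Phi=B_n$, both $\Phi_{(0)}$ and $\Phi_{(\frac12)}$ are of type $B$, so $X_1=X_2=B$ and $X_1^\vee=X_2^\vee=C$. No very even type $D$ partitions occur, so case (1) of Theorem~\ref{som-alge} applies. This immediately gives the formulas for $\pi_{w_\lambda}$, $\pi^\vee_{w_\lambda}$ and $d_{\rm LS}^{H^\vee}(\mathcal{O}(\pi^\vee_{w_\lambda}))$.

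The first step is to justify dropping the collapse subscript. The partition ${\bf q}_i^\vee$ is a special partition of type $C$, so by Proposition~\ref{Orbit-C} its transpose $({\bf q}_i^\vee)^t$ is already a $C$-partition. Hence $({\bf q}_i^\vee)^t_C=({\bf q}_i^\vee)^t$. In type $A$, $d_{\rm LS}({\bf p}_i)={\bf p}_i^t$.

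The second step is to identify the pair $(\mathbf{v},\mathbf{h})$ in $G^\vee=Sp$, so that Theorem~\ref{recipe} (type $C$ case) can be applied. Here I track how $H^\vee\subset G^\vee$ embeds. The factor $\Phi_{(z)}$ of type $A_{r-1}$ embeds into $\mathfrak{sp}$ via $\mathfrak{gl}_r$, so an orbit of partition ${\bf p}_i^t$ in it has $\mathfrak{sp}$-partition ${\bf p}_i^t\cup{\bf p}_i^t$. Of the two $C$-type factors of $H^\vee$, only one lies in the component $\mathfrak{l}_1$ containing the lowest root (the non-Levi part); the other is Levi-type. Which is which is dictated by Proposition~\ref{rclem5} and the extended Dynkin diagram of $C_n$. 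The factor coming from $\lambda\in\frac12+\mathbb{Z}$ is cut out by $\epsilon_i\pm\epsilon_j$ together with the long roots $2\epsilon_k$ on the dual side, and it plays the role of $\mathfrak{l}_1$. The factor coming from $z=0$ is contained in a Levi of $Sp_{2n}$ together with the $\mathfrak{gl}$ factors.

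The third step is to write ${\rm sat}_{L^\vee}^{H^\vee}(\mathcal{O}_{L^\vee})$ and extract $(\mathbf{v},\mathbf{h})$. I use the description of distinguished orbits in type $C$ (Proposition~\ref{Orbit-dis}): they are even parts with multiplicity one. Following \cite[\S 3]{Sommers}, within the $\mathfrak{l}_1$-factor with partition $({\bf q}_2^\vee)^t$, the minimal choice of $\mathfrak{l}_1$ with distinguished $x_1$ takes exactly one copy of each even part of odd multiplicity. These parts form $\mathbf{v}$, and the rest is absorbed into the Levi part. If there is no such part, then $\mathfrak{l}_1=0$ and $\mathbf{v}=\varnothing$. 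All remaining parts go into $\mathbf{h}$, which gives cases (1) and (2) of the statement. Applying Theorem~\ref{recipe} for $\mathfrak{g}^\vee$ of type $C$ then yields $(\mathbf{v}\cup\mathbf{h}^B)^t_B$.

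I expect the main obstacle to be the second and third steps: matching the semisimple element $s$ and the decomposition $\mathfrak{l}=\mathfrak{l}_1\oplus\mathfrak{l}_2$ with the factor $({\bf q}_2^\vee)^t$. This requires checking that the remaining parts of $({\bf q}_2^\vee)^t$ indeed form a valid $C$-partition $\mathbf{h}$, i.e. that odd parts have even multiplicity. I would verify this using that $({\bf q}_2^\vee)^t$ itself is a $C$-partition: removing one copy of each even part of odd multiplicity keeps the odd parts unchanged and makes every even multiplicity even. I would also check that the resulting conjugacy class $C\subset A(\mathcal{O})$ is consistent with the $d_{\rm Som}$ formula (\ref{E:dSom}).
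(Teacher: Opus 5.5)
Your proposal is correct and follows essentially the same route as the paper. The paper specializes Theorem \ref{som-alge}(1), reads off $(\mathbf{v},\mathbf{h})$ from the Bala--Carter data in $Sp_{2n}$ via Proposition \ref{Orbit-dis} and \cite[\S 3]{Sommers}, and applies Theorem \ref{recipe}; your extra checks (the doubled parts ${\bf p}_i^t\cup{\bf p}_i^t$ from the $\mathfrak{gl}$ factors, and that $\mathbf{h}$ is a genuine $C$-partition with $\mathbf{v}$ distinguished) supply details the paper leaves implicit.

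One small point: both dual type-$C$ factors have the same root description, so placing the $\frac12$-factor in $\mathfrak{l}_1$ is a choice rather than something forced by Proposition \ref{rclem5}. That choice is natural for $s=\exp(2\pi i\lambda)$ in the fundamental alcove, and in any case it is allowed by the symmetry of the extended $C_n$ diagram, which is why the paper also states Corollary \ref{type-b-som-2}.
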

From the symmetry of the extended Dynkin diagram of type $C$, we can swap the roles
of ${\bf q}^{\vee}_1$ and ${\bf q}^{\vee}_2$. Thus we have the following result.

\begin{Cor}\label{type-b-som-2}
  Keep the notation as above.  Let $L(\lam)$ be a highest weight module of $\mathfrak{so}(2n+1,\mathbb{C})$.
Then we have the following.
\begin{enumerate}
    \item When    $({\bf q}^{\vee}_{1})^t$ contains  even parts  with odd multiplicity, let $\mathbf{v}$ denote the partition formed by the largest such even parts of   $({\bf q}^{\vee}_{1})^t$, each appearing exactly once. Let $\mathbf{h}$ denote the remaining parts in $({\bf q}^{\vee}_{1})^t\cup ({\bf q}^{\vee}_{2})^t\cup(\bigcup\limits_{1\leq i\leq k}({\bf p}_{i}^t\cup {\bf p}_{i}^t))$.
    \item When   $({\bf q}^{\vee}_{1})^t$ contains no  even parts  with odd multiplicity, let $\mathbf{v}=\varnothing$ and $\mathbf{h}=({\bf q}^{\vee}_{1})^t\cup ({\bf q}^{\vee}_{2})^t\cup(\bigcup\limits_{1\leq i\leq k}({\bf p}_{i}^t\cup {\bf p}_{i}^t))$.
\end{enumerate}

Then  we have
$$\mathcal{O}_{{\rm Ann}(L(\lambda))} = d_{\rm Som}^{G^\vee}(L^{\vee}, \mathcal{O}_{L^\vee})=d_{\rm{Som}}(\mathbf{v},\mathbf{h})=(\mathbf{v} \cup \mathbf{h}^B)^t_B.$$

\end{Cor}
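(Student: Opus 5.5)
The plan is to repeat the argument for Corollary \ref{type-b-som} with the two symplectic factors of the dual integral subgroup swapped. The swap will be justified by showing that both choices describe the same Sommers datum. Since $\mathfrak g=\mathfrak{so}(2n+1,\mathbb C)$, we have $G^\vee=Sp(2n,\mathbb C)$. By Proposition \ref{rclem5} and the description of the $\Phi_{(z)}$, the dual pseudo-Levi subgroup is
$$H^\vee\simeq Sp(2a)\times Sp(2b)\times\prod_i GL(n_i),$$
where $Sp(2a)$ has root system dual to $\Phi_{(0)}$ and $Sp(2b)$ has root system dual to $\Phi_{(\frac12)}$. By Theorem \ref{som-alge}(1), $d_{\rm LS}^{H^\vee}(\mathcal O(\pi^\vee_{w_\lambda}))$ is the product of $({\bf q}^\vee_1)^t$, $({\bf q}^\vee_2)^t$ and the ${\bf p}_i^t$. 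This product is ${\rm sat}_{L^\vee}^{H^\vee}(\mathcal O_{L^\vee})$, and Theorem \ref{som-alg} gives
$$\mathcal O_{{\rm Ann}(L(\lambda))}=d^{G^\vee}_{\rm Som}(L^\vee,\mathcal O_{L^\vee}).$$
The right-hand side depends only on the $G^\vee$-conjugacy class of the pair $(L^\vee,\mathcal O_{L^\vee})$, as is clear from formula \eqref{E:dSom}. So it suffices to produce, from this fixed pair, a Sommers pair $(\mathbf v,\mathbf h)$ as in Theorem \ref{recipe} in which $\mathbf v$ comes from the first symplectic factor instead of the second.

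First, realize $H^\vee$ as $Z_{G^\vee}(s)^\circ$ for a semisimple $s$. Let $s$ act by $+1$ on the $2a$-dimensional block, by $-1$ on the $2b$-dimensional block, and by generic pairs $t_i^{\pm1}$ on the $GL$-blocks. In Sommers' setup, $\mathfrak l_1$ is the factor containing the lowest root, which is the $-1$-eigenspace factor. The key observation is that $-1$ is central in $Sp(2n)$. Therefore $-s$ has the same centralizer, and replacing $s$ by $-s$ interchanges the $\pm1$-eigenspaces. This is the symmetry of the extended Dynkin diagram of type $C_n$ exchanging its two end nodes. With $-s$ in place of $s$, the factor $Sp(2a)$ plays the role of $\mathfrak l_1$.

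Second, I would run the construction of \cite[\S3]{Sommers} for this choice, exactly as in Corollary \ref{type-b-s    om}. Take the nilpotent $x\in\mathcal O_{L^\vee}$ and write $x=x_1+x_2$ with $x_1\in\mathfrak{sp}(2a)$. Then shrink the $\pm1$-eigenspace of the central-type part of $s$ so that $x_1$ becomes distinguished in the smaller symplectic factor. By Proposition \ref{Orbit-dis}, a distinguished partition of type $C$ has distinct even parts. The largest such choice keeps one copy of each even part of $({\bf q}^\vee_1)^t$ that has odd multiplicity; this is $\mathbf v$. All remaining parts go into $\mathbf h$: the leftover parts of $({\bf q}^\vee_1)^t$, together with $({\bf q}^\vee_2)^t$, and each ${\bf p}_i^t$ taken twice, since a $GL(n_i)$-orbit of type ${\bf p}_i^t$ saturates in $Sp$ to ${\bf p}_i^t\cup{\bf p}_i^t$. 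Even parts of odd multiplicity must go to $\mathbf v$, because otherwise $\mathbf h$ would fail to be of type $C$ as required by \eqref{pairs}. If there are no such parts, then $\mathbf v=\varnothing$ and we are in the Lusztig--Spaltenstein case. Applying Theorem \ref{recipe} for type $C$ (recall that $\mathfrak g^\vee=\mathfrak{sp}$) gives $(\mathbf v\cup\mathbf h^B)^t_B$.

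The main obstacle is the first step. One must check that the pair obtained from $-s$ is a legitimate Sommers pair attached to the same $(M,\mathcal O_M)$ up to $G^\vee$-conjugacy, so that $d_{\rm Som}$ really takes the same value. I would verify this by noting that $Z_{G^\vee}(s)=Z_{G^\vee}(-s)$ and that $x$ is unchanged, so the pair $(\mathcal O,C)$ with $C\ni s$ versus $C\ni -s$ produces the same pseudo-Levi datum $(M,\mathcal O_M)$ in \eqref{E:dSom}. The symmetric roles of the two type $C$ factors in the proof of Corollary \ref{type-b-som} then carry over verbatim.
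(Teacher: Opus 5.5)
Your overall route is the paper's. The paper proves this corollary in one line, by applying the end-node symmetry of the extended Dynkin diagram of type $C_n$ to Corollary~\ref{type-b-som}. Your realization of that symmetry by $s\mapsto -s$, using that $-1$ is central in $Sp(2n,\mathbb{C})$, is correct and more explicit than the paper. So is your argument that the value does not change. The classes of $s$ and $-s$ in $A(x)$ do differ in general: they differ by the image of $-1$, which is the set of even parts of odd multiplicity of $\mathbf p=\mathbf v\cup\mathbf h$. However, Sommers' dual is computed from $(Z_{G^\vee}(s)^\circ,x)=(Z_{G^\vee}(-s)^\circ,x)$, so both classes have the same dual.

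The step that fails as written is the determination of $\mathbf v$.
\begin{itemize}
\item Type $C$ constrains only odd parts, so deleting any even parts from a type $C$ partition leaves a type $C$ partition. Hence ``otherwise $\mathbf h$ would fail to be of type $C$'' forces nothing.
\item ``The largest such choice'' is also wrong. The largest distinguished sub-partition of $({\bf q}^{\vee}_1)^t$ contains one copy of \emph{every} distinct even part. That is a different class in $A(x)$, and it can have a different dual.
\item For example, take $\mathbf p=[4,4,2]$ in $\mathfrak{sp}(10)$, with $\mathbf h$ the remaining parts. Theorem~\ref{recipe} gives $[3,3,3,1,1]$ for $\mathbf v=[2]$. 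This equals $d_{\rm BV}(\mathbf p)$, as it must, since $\{2\}$ is the image of $-1$. For $\mathbf v=[4,2]$ it gives $[3,3,2,2,1]$ instead.
\end{itemize}

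The missing argument is an invariant of the class of $-s$.
\begin{itemize}
\item For each even size $i$, the class of a semisimple element of $Z_{G^\vee}(x)$ in $A(x)$ records the parity of the number of Jordan blocks of size $i$ lying in its $(-1)$-eigenspace.
\item Before shrinking, the $(-1)$-eigenspace of $-s$ is exactly the $Sp(2a)$-block. So this parity is the multiplicity of $i$ in $({\bf q}^{\vee}_1)^t$ modulo $2$.
\item Legitimate shrinking moves only the $GL$-pairs of $L^\vee\cap Sp(2a)$ out of that eigenspace. It does so by deforming the eigenvalue $-1$ on them to $1$ along the central torus of each $GL$-factor. That torus centralizes $x$, so the class is preserved.
\item A distinguished $\mathbf v$ has distinct parts, so it must be exactly the set of even parts of odd multiplicity of $({\bf q}^{\vee}_1)^t$, each taken once.
\item This set is also the distinguished component of $\mathcal O_{L^\vee}$ in the symplectic factor of $L^\vee\cap Sp(2a)$, by the type $C$ Bala--Carter description.
\end{itemize}
With this justification in place of the type-$C$ claim, your proof goes through.
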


 \begin{Rem}
    Note that we have  ${\bf q}^{\vee}_{i}={\bf q}^{C}_{i}$ and  $({\bf q}^{\vee}_{i})^B={\bf q}_{i}$.
 \end{Rem}

\begin{Cor}\label{type-c-som}
Let $L(\lam)$ be a highest weight module of $\mathfrak{sp}(2n,\mathbb{C})$ and
		$[\lambda]=(\lambda)_{(0)} \cup (\lambda)_{(\frac{1}{2})}\cup [\lambda]_3$
				with $[\lambda]_3=\{{\lambda}_{{Y}_1},\dots,{\lambda}_{{Y}_m}\}$.
    Denote
 $H:=\Phi_{[\lambda]}$.

For $x= (\lambda)_{(0)}$,
  we have two special partitions ${\bf q}_{1}:=p_{C}(x^-)^s$ of type $C$ and  ${\bf q}^{\vee}_{1}:=p_{B}(x^-)^s$ of type $B$. For $y=(\lambda)_{(\frac{1}{2})}$,
  we have a special partition ${\bf q}_{2}={\bf q}^{\vee}_{2}:=p_{D}(y^-)^s$ of type $D$. For $\lambda_{Y_i}\in [\lambda]_3$, we have a special partition ${\bf p}_{i}:=p(\tilde{\lambda}_{Y_i})$ of type $A$.

Then we have 
   $$\pi_{w_{\lam}}={\bf q}_{1}\times {\bf q}_{2}\times \prod\limits_{1\leq i\leq k}{\bf p}_{i} \in {\rm Irr}(W(H)), $$ $$\pi^{\vee}_{w_{\lam}}={\bf q}^{\vee}_1\times {\bf q}_2\times \prod\limits_{1\leq i\leq k}{\bf p}_{i} \in {\rm Irr}(W(H^{\vee})), $$ 
   and 
$$d_{\rm LS}^{H^\vee}(\mathcal{O}(\pi^{\vee}_{w_{\lam}}))=({\bf q}^{\vee}_{1})^t\times ({\bf q}_{2}^t)_D\times \prod\limits_{1\leq i\leq k}{\bf p}_{i}^t = {\rm sat}_{L^\vee}^{H^\vee} (\mathcal{O}_{L^\vee}),$$
where $L^\vee \subset \Phi^\vee$ and $\mathcal{O}_{L^\vee}$ is a distinguished orbit of $L^\vee$.
Then we have the following.
\begin{enumerate}
    \item When   $({\bf q}_{2}^t)_D$  contains odd parts with odd multiplicity, let $\mathbf{v}$ denote the partition formed by the maximal such odd parts of  $({\bf q}_{2}^t)_D$, each appearing exactly once. Let $\mathbf{h}$ denote the remaining parts in $({\bf q}^{\vee}_{1})^t\cup ({\bf q}_{2}^t)_D\cup(\bigcup\limits_{1\leq i\leq k}({\bf p}_{i}^t\cup {\bf p}_{i}^t))$.
    \item When  $({\bf q}_{2}^t)_D$ contains no odd parts  with odd multiplicity, let $\mathbf{v}=\varnothing$ and $\mathbf{h}=({\bf q}^{\vee}_{1})^t\cup ({\bf q}_{2}^t)_D\cup(\bigcup\limits_{1\leq i\leq k}({\bf p}_{i}^t\cup {\bf p}_{i}^t))$.
\end{enumerate}

Then  we have
$$\mathcal{O}_{{\rm Ann}(L(\lambda))} = d_{\rm Som}^{G^\vee}(L^{\vee}, \mathcal{O}_{L^\vee})=d_{\rm{Som}}(\mathbf{v},\mathbf{h})=(\mathbf{v} \cup \mathbf{h}^C)^t_C.$$

\end{Cor}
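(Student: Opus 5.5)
Corollary~\ref{type-c-som} should follow from Theorem~\ref{som-alge}(1) together with the recipe of Theorem~\ref{recipe} applied to the dual group. For $\mathfrak{g}=\mathfrak{sp}(2n,\mathbb{C})$ we have $G^\vee$ of type $B_n$, and by Proposition~\ref{rclem5} the integral root system $H=\Phi_{[\lambda]}$ is $C\times D\times\prod A$. Here $\Phi_{(0)}$ is of type $C$ and $\Phi_{(1/2)}$ is of type $D$. Passing to $H^\vee$ gives $B\times D\times \prod A$. The first step is therefore to identify the factors of $\pi^\vee_{w_\lambda}$. On the $(\lambda)_{(0)}$ factor we transport the special $C$-partition ${\bf q}_1$ to the special $B$-partition ${\bf q}_1^\vee=p_B(x^-)^s$; both are produced by the H-algorithm from the same tableau $P(x^-)$. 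The $D$ factor ${\bf q}_2$ is self-dual. The type $A$ factors ${\bf p}_i$ are unchanged. This uses the Springer identification of special partitions with special representations recalled before Theorem~\ref{som-alge}, together with the canonical isomorphism $W(C_k)\simeq W(B_k)$.

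Next we compute $d_{\rm LS}^{H^\vee}$ factorwise, using $d_{\rm LS}({\bf p})={\bf p}^t_X$. On the $B$ factor, ${\bf q}_1^\vee$ is special, so $({\bf q}_1^\vee)^t$ is already a $B$-partition (Proposition~\ref{Orbit-B}) and no collapse is needed. On the $D$ factor we get $({\bf q}_2^t)_D$. On the $A$ factors we get ${\bf p}_i^t$. Then we realize the product as ${\rm sat}_{L^\vee}^{H^\vee}(\mathcal{O}_{L^\vee})$. In classical types a Levi has the form $\prod GL\times(\text{classical})$, and a distinguished orbit has distinct parts by Proposition~\ref{Orbit-dis}. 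Inside the $B$-type $G^\vee=SO(2n+1)$, the $A$ factor ${\bf p}_i^t$ (viewed in $GL$) contributes ${\bf p}_i^t\cup{\bf p}_i^t$ to the partition of $2n+1$.

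The third step is to place $(L^\vee,\mathcal{O}_{L^\vee})$ as a pair $(M,\mathcal{O}_M)$ in the form $(\mathbf{v},\mathbf{h})$ of \eqref{pairs} for type $B$. Here $\mathfrak{l}_1$ is of type $D$ and $\mathbf{v}$ must be a distinguished $D$-partition, i.e.\ distinct odd parts. The pseudo-Levi $D\times B$ part of $H^\vee$ comes from $\Phi_{(1/2)}$, so $\mathfrak{l}_1$ is the $D$ factor, and $\mathbf{v}$ has to be extracted from $({\bf q}_2^t)_D$. The odd parts of odd multiplicity in $({\bf q}_2^t)_D$ cannot be absorbed into $GL$-pairs. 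The claim is that taking one copy of each such part gives the distinguished part $\mathbf{v}$, and that the remaining parts (now of even multiplicity) together with the $B$ factor and the doubled $A$ parts form $\mathbf{h}$. Case (2), where there are no such parts, gives $\mathbf{v}=\varnothing$, which is ordinary Lusztig--Spaltenstein/Barbasch--Vogan duality.

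Finally, Theorem~\ref{recipe} for type $B$ gives $d_{\rm Som}(\mathbf{v},\mathbf{h})=(\mathbf{v}\cup\mathbf{h}^C)^t_C$, which combined with Theorem~\ref{som-alge} yields the corollary. The main obstacle is the third step. One must check that $\mathbf{v}$ has even size and that $\mathbf{h}$ is a genuine $B$-partition of $2n+1-|\mathbf{v}|$ in which even parts have even multiplicity. One must also check that the conjugacy class from $L^\vee$ matches the class produced by Sommers' normalization (\cite[\S3]{Sommers}), where the $D$-part is chosen distinguished. For this I would use that a $D$-partition has odd parts whose total multiplicity is even, so the number of odd parts with odd multiplicity is even. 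I would then verify directly that $\mathbf{h}$ inherits the $B$ parity conditions from $({\bf q}_1^\vee)^t$ and the pairing of the remaining parts.
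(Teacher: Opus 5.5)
Your proposal is correct and takes essentially the same route as the paper, which obtains this corollary directly from Theorem~\ref{som-alge}(1) together with Proposition~\ref{Orbit-dis}, Sommers' normalization in \cite[\S 3]{Sommers} (with $\mathfrak{l}_1$ the type $D$ factor coming from $\Phi_{(1/2)}^\vee$), and the type $B$ recipe of Theorem~\ref{recipe}. Your parity check that $|\mathbf{v}|$ is even and that $\mathbf{h}$ is a genuine $B$-partition fills in details the paper leaves implicit.
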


\begin{Rem}
    Note that we have  ${\bf q}^{\vee}_{1}={\bf q}^{B}_{1}$ and  $({\bf q}^{\vee}_{1})^C={\bf q}_{1}$.
 \end{Rem}

\begin{Cor}\label{type-D-som}
Let $L(\lam)$ be a highest weight module of $\mathfrak{so}(2n,\mathbb{C})$ and
		$[\lambda]=(\lambda)_{(0)} \cup (\lambda)_{(\frac{1}{2})}\cup [\lambda]_3$
				with $[\lambda]_3=\{{\lambda}_{{Y}_1},\dots,{\lambda}_{{Y}_m}\}$.
    Denote
 $H:=\Phi_{[\lambda]}$.

Denote $x_1= (\lambda)_{(0)}$ and $ x_2=(\lambda)_{(\frac{1}{2})}$. For each $1\leq i\leq 2$,
  we have a special partition ${\bf q}^{\vee}_{i}={\bf q}_{i}:=p_{D}(x_i^-)^s$ of type $D$. For $\lambda_{Y_i}\in [\lambda]_3$, we have a special partition ${\bf p}_{i}:=p(\tilde{\lambda}_{Y_i})$ of type $A$.

Then we have 
   $$\pi_{w_{\lam}}=\pi^{\vee}_{w_{\lam}}={\bf q}_{1}\times {\bf q}_{2}\times \prod\limits_{1\leq i\leq k}{\bf p}_{i} \in {\rm Irr}(W(H)) $$ 
   and
$$d_{\rm LS}^{H^\vee}(\mathcal{O}(\pi^{\vee}_{w_{\lam}}))=({\bf q}_{1}^t)_D\times ({\bf q}_{2}^t)_D\times \prod\limits_{1\leq i\leq k}{\bf p}_{i}^t = {\rm sat}_{L^\vee}^{H^\vee} (\mathcal{O}_{L^\vee}),$$
where $L^\vee \subset \Phi^\vee$ and $\mathcal{O}_{L^\vee}$ a distinguished orbit of $L^\vee$.
Then we have the following.
\begin{enumerate}
    \item When   $({\bf q}_{2}^t)_D$  contains  odd parts  with odd multiplicity, let $\mathbf{v}$ denote the partition formed by the maximal such odd parts of  $({\bf q}_{2}^t)_D$, each appearing exactly once. Let $\mathbf{h}$ denote the remaining parts in $({\bf q}_{1}^t)_D\cup ({\bf q}_{2}^t)_D\cup(\bigcup\limits_{1\leq i\leq k}({\bf p}_{i}^t\cup {\bf p}_{i}^t))$.
    \item When $({\bf q}_{2}^t)_D$ contains no odd parts  with odd multiplicity, let $\mathbf{v}=\varnothing$ and $\mathbf{h}=({\bf q}_{1}^t)_D\cup ({\bf q}_{2}^t)_D\cup(\bigcup\limits_{1\leq i\leq k}({\bf p}_{i}^t\cup {\bf p}_{i}^t))$.
\end{enumerate}

Then  we have
$$\mathcal{O}_{{\rm Ann}(L(\lambda))} = d_{\rm Som}^{G^\vee}(L^{\vee}, \mathcal{O}_{L^\vee})=d_{\rm{Som}}(\mathbf{v},\mathbf{h})=(\mathbf{v} \cup (\mathbf{h}^t_D)^t)^t_D.$$
\end{Cor}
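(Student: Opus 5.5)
The plan is to specialize Theorem~\ref{som-alge} to $\Phi=D_n$ and then turn the resulting pair $(L^\vee,\mathcal{O}_{L^\vee})$ into Sommers' parametrization $(\mathbf{v},\mathbf{h})$. After that, Theorem~\ref{recipe} gives the answer.

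\emph{Step 1: the data $\pi_{w_\lambda}$, $\pi^\vee_{w_\lambda}$ and $d_{\rm LS}^{H^\vee}$.} Since $D_n$ is simply laced, we have $G^\vee\simeq G$ and $H^\vee\simeq H$. By Proposition~\ref{rclem5}, $H=\Phi_{(0)}\times\Phi_{(\frac12)}\times\prod_i\Phi_{(z_i)}$, where the first two factors are of type $D$ and the others are of type $A$. The canonical isomorphism $W(H)\simeq W(H^\vee)$ is then the identity on each factor. So the special representation attached to $x_j$ by the H-algorithm of type $D$ is the same on both sides, which gives ${\bf q}_j^\vee={\bf q}_j$ and $\pi_{w_\lambda}=\pi^\vee_{w_\lambda}$. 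On a type $D$ factor the Lusztig--Spaltenstein dual is $({\bf q}_j^t)_D$, and on a type $A$ factor it is ${\bf p}_i^t$. This is exactly the displayed formula for $d_{\rm LS}^{H^\vee}$ in part (1) of Theorem~\ref{som-alge}. If both ${\bf q}_j$ are very even, we are in part (2) of that theorem instead, and I will only note that the formula agrees with the statement there.

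\emph{Step 2: the Levi $L^\vee$ and the distinguished orbit $\mathcal{O}_{L^\vee}$.} Here I use Proposition~\ref{Orbit-dis} factor by factor.

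\begin{itemize}
\item In a type $D$ factor, an orbit $\mathbf{r}=({\bf q}_j^t)_D$ is the saturation from a Levi of the form $D_{k}\times\prod GL$. The distinguished orbit on the $D_k$ part has partition $\mathbf{v}_j$, consisting of the odd parts of $\mathbf{r}$ of odd multiplicity, each taken once. The remaining parts of $\mathbf{r}$ come in pairs, and each pair comes from a $GL$ block carrying its regular orbit.
\item In a type $A$ factor, the orbit ${\bf p}_i^t$ comes from a Levi $\prod GL$ of $GL$, again with regular orbits on each block.
\end{itemize}

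Now embed everything in $G^\vee=SO(2n)$. A $GL_m$ block sits in $SO(2n)$ with its regular nilpotent having partition $[m,m]$. Consequently, each type $A$ factor contributes ${\bf p}_i^t\cup{\bf p}_i^t$, and the paired parts of the type $D$ factors are contributed unchanged.

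\emph{Step 3: matching with Sommers' data $(\mathbf{v},\mathbf{h})$ (main obstacle).} The pseudo-Levi $H^\vee$ is built from the extended Dynkin diagram. Because the diagram has an end symmetry, I will arrange that $\Phi_{(\frac12)}$ is the simple factor $\mathfrak{l}_1$ containing the lowest root, and that $\Phi_{(0)}$ together with the $A$-factors lies in $\mathfrak{l}_2$.

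\begin{itemize}
\item After conjugating, the part $\mathbf{v}_1$ of $\Phi_{(0)}$ is absorbed into the $\mathfrak{l}_2$ component, which is of type $D$ as required. So only the $\Phi_{(\frac12)}$ factor contributes a distinguished piece $x_1\in\mathfrak{l}_1$, namely $\mathbf{v}=\mathbf{v}_2$.
\item Everything else forms $\mathbf{h}$. This is the set of remaining parts of $({\bf q}_1^t)_D\cup({\bf q}_2^t)_D\cup\bigcup_i({\bf p}_i^t\cup{\bf p}_i^t)$, and it lies in $\parti_D$.
\end{itemize}

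This gives cases (1) and (2) of the statement, according to whether $\mathbf{v}_2$ is nonempty. I expect this step to be the main difficulty, for two reasons.

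\begin{itemize}
\item One has to check, following \cite[\S 3]{Sommers}, that the $G^\vee$-conjugacy class of $(L^\vee,\mathcal{O}_{L^\vee})$ defines a pair $(\mathcal{O},C)$ whose normalized semisimple element $s$ realizes precisely this $\mathbf{v}$. In particular one must confirm that taking the distinct odd parts of odd multiplicity of $({\bf q}_2^t)_D$ is the normalization in which $x_1$ is distinguished in $\mathfrak{l}_1$.
\item One has to check that $\mathbf{h}$ is not very even when $\mathbf{v}\neq\varnothing$, which holds because $\mathbf{h}$ then still contains odd parts.
\end{itemize}

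\emph{Step 4: conclusion.} Theorem~\ref{som-alg} gives $\mathcal{O}_{{\rm Ann}(L(\lambda))}=d_{\rm Som}^{G^\vee}(L^\vee,\mathcal{O}_{L^\vee})=d_{\rm Som}(\mathbf{v},\mathbf{h})$. The type $D$ case of Theorem~\ref{recipe} then yields $(\mathbf{v}\cup(\mathbf{h}^t_D)^t)^t_D$, as claimed.
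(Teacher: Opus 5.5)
Your route is the one the paper intends. The paper gives no argument beyond invoking Proposition~\ref{Orbit-dis}, Theorem~\ref{recipe} and \cite[\S 3]{Sommers} to convert the Bala--Carter data of Theorem~\ref{som-alge} into $(\mathbf{v},\mathbf{h})$, and your Steps 1, 2 and 4 spell this out correctly. The normalization you flag as the main obstacle closes quickly:
\begin{itemize}
\item The semisimple element $s$ with $Z_{G^\vee}(s)^\circ=L^\vee$ acts by $\pm1$ on the two orthogonal blocks, and by $t,t^{-1}$ on $W\oplus W^*$ for each $GL(W)$ block.
\item These $GL$ contributions lie in a torus of $Z(x)^\circ$. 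This includes $t=-1$ on the paired blocks inside the $\Phi_{(\frac12)}$ factor.
\item Hence $C$ is represented by the element $s_0$ acting by $-1$ exactly on the Jordan blocks of $\mathbf{v}_2$ and by $+1$ elsewhere.
\item Then $Z_{\mathfrak{g}^\vee}(s_0)=\mathfrak{so}(|\mathbf{v}_2|)\oplus\mathfrak{so}(2n-|\mathbf{v}_2|)$, with $x_1$ distinguished of type $\mathbf{v}_2$. This is Sommers' normalization with $\mathbf{v}=\mathbf{v}_2$.
\end{itemize}

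What fails is your justification that $\mathbf{h}$ is not very even when $\mathbf{v}\neq\varnothing$ ``because $\mathbf{h}$ then still contains odd parts''. Nothing forces odd parts into $\mathbf{h}$. Take $\mathbf{q}_1=[2,2]$, $\mathbf{q}_2=[1^4]$ and no type $A$ factors; this happens, e.g., for $\lambda=(1,2,\tfrac32,\tfrac12)$ in $\mathfrak{so}(8,\mathbb{C})$. Then $(\mathbf{q}_1^t)_D=[2,2]$ and $(\mathbf{q}_2^t)_D=[4]_D=[3,1]$. So $\mathbf{v}=[3,1]$ while $\mathbf{h}=[2,2]$ is very even, which lies outside the normalization under which Theorem~\ref{recipe} is stated (see the remark following it).

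The displayed formula survives, but you need a separate argument in this case:
\begin{itemize}
\item If $\mathbf{h}$ is very even, the odd parts of $\mathbf{p}=\mathbf{v}\cup\mathbf{h}$ are exactly those of $\mathbf{v}$, each occurring once. Hence $(\mathbf{q}_1^t)_D$ has no odd parts and $\mathbf{v}_1=\varnothing$.
\item Then $L^\vee=SO(|\mathbf{v}_2|)\times\prod GL$ is an honest Levi subgroup of $G^\vee$. Since $d_{\rm Som}$ extends $d_{\rm BV}$ on Levi pairs, $d^{G^\vee}_{\rm Som}(L^\vee,\mathcal{O}_{L^\vee})=d_{\rm BV}(\mathbf{p})=(\mathbf{p}^t)_D$.
\item On the other side, $\mathbf{h}$ very even forces $\mathbf{h}^t$ very even. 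So $(\mathbf{h}^t_D)^t=\mathbf{h}$, and $(\mathbf{v}\cup(\mathbf{h}^t_D)^t)^t_D=(\mathbf{p}^t)_D$ as well.
\end{itemize}
Equivalently, in the adjoint group $s_0$ and $-s_0$ coincide, and $-s_0$ acts by $-1$ only on even blocks, so $C$ is trivial. Replace your parenthetical claim with this case distinction.
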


From the symmetry of the extended Dynkin diagram of type $D$, we can swap the roles
of ${\bf q}_1$ and ${\bf q}_2$. Thus we have the following result.

\begin{Cor}\label{type-D-som-2}
Keep the notation as above. Let $L(\lam)$ be a highest weight module of $\mathfrak{so}(2n,\mathbb{C})$.
Then we have the following.
\begin{enumerate}
    \item When   $({\bf q}_{1}^t)_D$  contains  odd parts  with odd multiplicity, let $\mathbf{v}$ denote the partition formed by the maximal such odd parts of  $({\bf q}_{1}^t)_D$, each appearing exactly once. Let $\mathbf{h}$ denote the remaining parts in $({\bf q}_{1}^t)_D\cup ({\bf q}_{2}^t)_D\cup(\bigcup\limits_{1\leq i\leq k}({\bf p}_{i}^t\cup {\bf p}_{i}^t))$.
    \item When $({\bf q}_{1}^t)_D$ contains no odd parts  with odd multiplicity, let $\mathbf{v}=\varnothing$ and $\mathbf{h}=({\bf q}_{1}^t)_D\cup ({\bf q}_{2}^t)_D\cup(\bigcup\limits_{1\leq i\leq k}({\bf p}_{i}^t\cup {\bf p}_{i}^t))$.
\end{enumerate}

Then  we have
$$\mathcal{O}_{{\rm Ann}(L(\lambda))} = d_{\rm Som}^{G^\vee}(L^{\vee}, \mathcal{O}_{L^\vee})=d_{\rm{Som}}(\mathbf{v},\mathbf{h})=(\mathbf{v} \cup (\mathbf{h}^t_D)^t)^t_D.$$

\end{Cor}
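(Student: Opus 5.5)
The plan is to deduce this statement from Corollary~\ref{type-D-som} by a symmetry argument, rather than redo the computation of Theorem~\ref{som-alge}. In Corollary~\ref{type-D-som}, $\mathcal{O}_{{\rm Ann}(L(\lambda))}=d_{\rm Som}^{G^\vee}(L^\vee,\mathcal{O}_{L^\vee})$. The pair $(L^\vee,\mathcal{O}_{L^\vee})$ is only determined up to $G^\vee$-conjugacy. Sommers' parametrization by $(\mathbf{v},\mathbf{h})$ (Theorem~\ref{recipe}) depends on which simple factor $\mathfrak{l}_1$ of the pseudo-Levi contains the root space of the lowest root. Both $\Phi_{(0)}$ and $\Phi_{(1/2)}$ are of type $D$, so the pseudo-Levi $H^\vee\simeq D_a\times D_b\times\prod A$ can be embedded in the extended Dynkin diagram of $D_n$ in two ways: with the $D_b$ factor (coming from ${\bf q}_2$) attached to the node $-\alpha_0$, or with the $D_a$ factor (coming from ${\bf q}_1$) attached there.

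The first step is to show that these two placements give $G^\vee$-conjugate data. The extended Dynkin diagram of $D_n$ has the automorphism exchanging the branch $\{\alpha_0,\alpha_1\}$ with $\{\alpha_{n-1},\alpha_n\}$ and reversing the chain. I would argue that it is induced by an element of the fundamental group $\Omega\subset \widetilde W$, i.e. by a translation combined with a Weyl group element. Hence it is realized by conjugation by an element $g\in G^\vee_{\rm ad}$. Conjugation by $g$ carries a semisimple $s$ with $Z(s)^\circ=D_b\times D_a\times\cdots$ (with $D_b$ at the extra node) to one with the factors interchanged. It also carries $x\in\mathcal{O}$ to an element of the same $G^\vee$-orbit, and preserves the conjugacy class of the image of $s$ in $A(\mathcal{O})$. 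Consequently $d_{\rm Som}$ computed from either placement is the same orbit. Equivalently, Sommers' map depends only on $(\mathcal{O},C)$, and both decompositions represent the same pair $(\mathcal{O},C)$.

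The second step is to run the recipe of Corollary~\ref{type-D-som} verbatim with the roles of $x_1$ and $x_2$ exchanged. Now $\mathfrak{l}_1$ is the $D$-factor built from $({\bf q}_1^t)_D$. The distinguished part $\mathbf{v}$ of the orbit ${\rm sat}(\mathcal{O}_{L^\vee})$ inside $\mathfrak{l}_1$ is read off by Proposition~\ref{Orbit-dis}. It consists of the odd parts of $({\bf q}_1^t)_D$ of odd multiplicity, each taken once (or $\mathbf{v}=\varnothing$ if there are none). All remaining parts, namely $({\bf q}_2^t)_D$, the leftover parts of $({\bf q}_1^t)_D$, and the doubled type-$A$ pieces ${\bf p}_i^t\cup{\bf p}_i^t$ coming from the Levi embeddings $GL\subset SO$, form $\mathbf{h}\in\parti_D(2n-2k)$. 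Here I must check two things: that $\mathbf{v}$ really is a distinguished type-$D$ partition, and that the leftover parts of $({\bf q}_1^t)_D$ have even multiplicities. Both follow from $({\bf q}_1^t)_D$ being a $D$-partition, whose even parts already have even multiplicity. Theorem~\ref{recipe} then gives $(\mathbf{v}\cup(\mathbf{h}^t_D)^t)^t_D$.

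The main obstacle is the first step. One must make sure the diagram automorphism is realized inside the group acting on $\mathcal{N}(G^\vee)$ without changing the orbit. The subtle case is very even orbits, where the outer automorphism of $D_n$ would swap numerals I and II. I would handle this as follows. When $\mathbf{v}\neq\varnothing$, the remark after Theorem~\ref{recipe} shows the resulting orbit is not very even, so no numeral issue arises. When $\mathbf{v}=\varnothing$, both corollaries reduce to Lusztig--Spaltenstein/Barbasch--Vogan duality of the same saturated orbit, so the two answers coincide trivially. Any very even case with both ${\bf q}_i$ very even is excluded here, being covered by Theorem~\ref{som-alge}(2).
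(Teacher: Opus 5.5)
Your proposal is correct and follows the paper's route. The paper proves this statement in one line, by invoking the symmetry of the extended Dynkin diagram of type $D$ to swap the roles of ${\bf q}_1$ and ${\bf q}_2$ in Corollary~\ref{type-D-som}; you realize that symmetry by an element of $\Omega$, hence by an inner automorphism preserving the pair $(\mathcal{O},C)$, which is the same argument in more detail. Because the swap is inner, your separate case analysis for very even orbits is unnecessary, and as written it tacitly assumes $\mathbf{v}=\varnothing$ in both corollaries at once and cites the wrong remark, but neither point affects the proof.
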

\begin{Rem}
    In the above Corollary \ref{type-D-som}, $\mathcal{O}_{{\rm Ann}(L(\lambda))}$ is a very even orbit if and only if $({\bf q}_{1}^t)_D$  and $({\bf q}_{2}^t)_D$  have no odd parts. In this case, we have 
    $$\mathcal{O}_{{\rm Ann}(L(\lambda))} = d_{\rm Som}^{G^\vee}(L^{\vee}, \mathcal{O}_{L^\vee})=\mathbf{h}^t_D={\bf q}_{1}\cupcol {\bf q}_{2}\cupcol(\cupcol_{ i}2{\bf p}_{i}).$$
    
    The numeral of $\mathcal{O}_{{\rm Ann}(L(\lambda))}$ can be determined by Theorem \ref{pa-suanfa}.
\end{Rem}

\section{The new proof for the partition algorithm by the Sommers duality} \label{new-proof}  

In this section, we give a new proof of the partition algorithm in Theorem \ref{pa-suanfa}. Our proof is based on the Sommers duality. 

First, we establish several useful lemmas needed  before we give the proof of Theorem \ref{pa-suanfa}.

\begin{Lem}\label{lem:collapse-even-shift}
Let ${\bf a}\in \parti (2m+1)$ and ${\bf x}\in \parti(2m)$, and let ${\bf b}$ be a partition all of whose parts are even.
Then the following identities hold:
\[
({\bf a} \csqcup {\bf b})_B = ({\bf a}_B \csqcup {\bf b})_B, ({\bf x} \csqcup {\bf b})_C = ({\bf x}_C \csqcup {\bf b})_C, ({\bf x} \csqcup {\bf b})_D = ({\bf x}_D \csqcup {\bf b})_D.
\]
\end{Lem}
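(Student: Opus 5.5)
Since ${\bf a}\csqcup{\bf b}=({\bf a}^t\cup{\bf b}^t)^t$, the column lengths of ${\bf a}\csqcup{\bf b}$ are those of ${\bf a}$ together with those of ${\bf b}$. Because every part of ${\bf b}$ is even, every column length of ${\bf b}$ occurs an even number of times in ${\bf b}^t$. So adjoining ${\bf b}$ columnwise amounts to inserting pairs of equal columns. Equivalently, each row of ${\bf a}\csqcup{\bf b}$ is the corresponding row of ${\bf a}$ lengthened by an even amount $b_i$, with ${\bf b}=[b_1,b_2,\dots]$ padded by zeros. Hence $({\bf a}\csqcup{\bf b})_i=a_i+b_i$, and the map ${\bf a}\mapsto{\bf a}\csqcup{\bf b}$ is ``add $b_i$ to row $i$.'' It preserves the parity of each row and preserves the dominance order. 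I will prove all three identities by one argument, written out for type $B$; types $C$ and $D$ differ only in which parity class is constrained.

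Set ${\bf c}:=({\bf a}\csqcup{\bf b})_B$ and ${\bf c}':=({\bf a}_B\csqcup{\bf b})_B$. I use the characterisation recalled in the definition of collapse: ${\bf d}_X$ is the unique largest $X$-partition dominated by ${\bf d}$. Since ${\bf a}_B\le{\bf a}$ and adding $b_i$ to row $i$ preserves dominance, ${\bf a}_B\csqcup{\bf b}\le{\bf a}\csqcup{\bf b}$. Therefore ${\bf c}'\le{\bf c}$ by maximality. It remains to show ${\bf c}\le{\bf c}'$, and for this it suffices to show ${\bf c}\le{\bf a}_B\csqcup{\bf b}$.

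This reverse inequality is the main obstacle. I would attack it through the explicit collapse procedure rather than abstractly. Each collapse step on ${\bf a}$ takes the largest even part $q$ of odd multiplicity, lowers the last copy of $q$ by one, and raises the first later part $r<q-1$ by one. I claim that running the collapse of ${\bf a}$ and applying ``add $b_i$ to row $i$'' to the result at each stage produces exactly the collapse steps of ${\bf a}\csqcup{\bf b}$. The reasons are as follows.

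First, the boundary between distinct columns of ${\bf a}$ in row $i$ is shifted by the even amount $b_i$. Because ${\bf b}$ contributes its column lengths in pairs, the multiplicity of an even part $q$ in ${\bf a}$ corresponds to an even part $q+b_i$ in ${\bf a}\csqcup{\bf b}$ with the same multiplicity parity.

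Second, the defect positions, namely the ends of blocks of even parts of odd multiplicity, occur in the same rows in both partitions. The step ``lower the last copy, raise the first part below $q-1$'' therefore happens in matching rows.

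Third, parts of ${\bf a}\csqcup{\bf b}$ that are equal to $q+b_i$ but come from different rows of ${\bf a}$ must be checked not to spoil this matching. When $b_i\neq b_j$, the parts in rows $i$ and $j$ differ by an even jump in ${\bf b}$. In that case I will verify directly, using the column description, that such coincidences again pair up as equal columns of even total count.

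Consequently the collapse of ${\bf a}\csqcup{\bf b}$ is obtained by a sequence of steps each of which is the image of a step collapsing ${\bf a}$. Some extra defects may arise, but they involve only pairs and leave the dominance comparison intact. This gives ${\bf c}\le{\bf a}_B\csqcup{\bf b}$ and hence equality.

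If the step-by-step matching becomes messy, I would instead use the criterion that ${\bf d}$ is a $B$- (resp. $C$-, $D$-) partition iff each even (resp. odd, even) part appears with even multiplicity, phrased in terms of columns. Since ${\bf b}^t$ has every column length with even multiplicity, membership of ${\bf a}_B\csqcup{\bf b}$ in the right class can be read off from ${\bf a}_B$, and the comparison then reduces to the partial-sum inequalities $\sum_{i\le l}c_i\le\sum_{i\le l}(({\bf a}_B)_i+b_i)$, which I would check using the explicit form of the collapse. The $C$ and $D$ cases are identical, with the parity roles of the parts adjusted.
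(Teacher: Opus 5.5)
Your easy half is correct: $\mathbf{a}_B\le\mathbf{a}$ gives $\mathbf{a}_B\csqcup\mathbf{b}\le\mathbf{a}\csqcup\mathbf{b}$, hence $\mathbf{c}'\le\mathbf{c}$. Reducing the converse to $\mathbf{c}\le\mathbf{a}_B\csqcup\mathbf{b}$ is also legitimate. The gap is that the mechanism you propose for this inequality is false.

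The shift $b_i$ need not be constant on a block of equal rows of $\mathbf{a}$, so such a block can be split. Hence neither multiplicity parities nor defect positions are preserved. (The worry in your third point is the opposite one and never occurs: since both sequences are nonincreasing, $a_i+b_i=a_j+b_j$ forces $a_i=a_j$ and $b_i=b_j$.) Concretely, take $\mathbf{a}=[3,2,2]$, which is already of type $B$, and $\mathbf{b}=[2,2]$. Then $\mathbf{a}\csqcup\mathbf{b}=[5,4,2]$ is not of type $B$, and
\[
(\mathbf{a}\csqcup\mathbf{b})_B=[5,3,3]\neq[5,4,2]=\mathbf{a}_B\csqcup\mathbf{b}.
\]
So the collapse of $\mathbf{a}\csqcup\mathbf{b}$ contains steps that are images of no step collapsing $\mathbf{a}$. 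Your matching claim would yield $(\mathbf{a}\csqcup\mathbf{b})_B=\mathbf{a}_B\csqcup\mathbf{b}$, which fails here. The same example refutes your fallback assertion that membership of $\mathbf{a}_B\csqcup\mathbf{b}$ in $\parti_B$ can be read off from $\mathbf{a}_B$. The lemma holds only because both sides are collapsed once more. Your sentence that the extra defects ``involve only pairs and leave the dominance comparison intact'' is exactly the statement that still has to be proved.

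The paper avoids any step-by-step correspondence between the two collapses. It inducts along the collapse of $\mathbf{a}$ and shows that a single elementary step $\mathbf{a}\to\mathbf{a}'$ leaves $(\,\cdot\,\csqcup\mathbf{b})_B$ unchanged. It does this by splitting $\mathbf{a}\csqcup\mathbf{b}$ after row $p$ and applying \cite[Lem.~3.1]{ac03}.

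Your partial-sum fallback can also be completed, but it needs an explicit formula for the collapse rather than inheritance of type. Write $S_k(\mathbf{d})=d_1+\cdots+d_k$. Then $S_k(\mathbf{d}_B)=S_k(\mathbf{d})-1$ if $S_k(\mathbf{d})\not\equiv k\pmod 2$ and it is not the case that $d_k=d_{k+1}$ is even; otherwise $S_k(\mathbf{d}_B)=S_k(\mathbf{d})$.
\begin{itemize}
\item The upper bound holds because a $B$-partition $\mathbf{p}$ with $S_k(\mathbf{p})\not\equiv k$ has $p_k=p_{k+1}$ even. So $\mathbf{p}\le\mathbf{d}$ with $S_k(\mathbf{p})=S_k(\mathbf{d})$ forces $d_k\le p_k=p_{k+1}\le d_{k+1}$, i.e.\ $d_k=d_{k+1}$ even.
\item One then checks that these numbers are the partial sums of a $B$-partition.
\end{itemize}
Now use three facts: $S_k(\mathbf{b})$ is even; $a_k+b_k\equiv a_k$; and $a_k+b_k=a_{k+1}+b_{k+1}$ if and only if $a_k=a_{k+1}$ and $b_k=b_{k+1}$. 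A three-case comparison then gives $S_k((\mathbf{a}\csqcup\mathbf{b})_B)=S_k((\mathbf{a}_B\csqcup\mathbf{b})_B)$ for all $k$. The cases are: $S_k(\mathbf{a})\equiv k$; $S_k(\mathbf{a})\not\equiv k$ with $a_k=a_{k+1}$ even, where you use that $\mathbf{a}_B$ itself then has equal even rows $k,k+1$; and neither. Type $D$ uses the same formula. Type $C$ uses the analogue ``subtract $1$ when $S_k$ is odd, unless $d_k=d_{k+1}$ is odd''.
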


\begin{proof}
Recall that the $B$-collapse of a partition is obtained by repeatedly replacing the largest even part that occurs an odd number of times: if $2k$ is such a part, the last occurrence of $2k$ is replaced by $2k-1$, and the first subsequent even part strictly smaller than $2k-1$ is increased by $1$; this process continues until the resulting partition is a partition of type $B$.
All parts of ${\bf b}$ are even, so adding ${\bf b}$ (row‑wise) shifts the parts of ${\bf a}$ by even quantities. Consequently, any odd part of ${\bf a}$ is still  an odd part of ${\bf a} \csqcup {\bf b}$.  Suppose $a_{i_0}$ is an odd part of ${\bf a}$. Then $a_{i_0}+b_{i_0}$ is still an odd part in ${\bf a} \csqcup {\bf b}$ and ${\bf a}_B \csqcup {\bf b}$ since the boxes in odd parts will not be moved during the $B$-collapse. Thus $a_{i_0}+b_{i_0}$ is still an odd part in $({\bf a} \csqcup {\bf b})_B$ and $({\bf a}_B \csqcup {\bf b})_B$.
So we may assume that there is only one odd part in ${\bf a}$ and this odd part is the first part of ${\bf a}$.

Assume $\mathbf{a}$ is not of type $B$.
Let $2k$ be the largest even part of $\mathbf{a}$ that occurs an odd number of times,
and let $p$ be the index of the last row of $\mathbf{a}$ that equals $2k$. Then $p$ is an even number. 
One $B$‑collapse step replaces $a_p$ by $2k-1$ and, for the first index $q>p$ with $a_q$ being even and
$a_q < 2k-1$, replaces $a_q$ by $a_q+1$; if no such $q$ exists, a new part $1$ is
appended. Let $\mathbf{a}'$ denote this new partition obtained from $\mathbf{a}$ by one $B$‑collapse step.

We prove the lemma by induction on the number of elementary collapse steps
needed to turn $\mathbf{a}$ into $\mathbf{a}_B$.
It suffices to prove
\begin{equation}\label{step-1}
(\mathbf{a}' \csqcup \mathbf{b})_B = (\mathbf{a} \csqcup \mathbf{b})_B ,
\end{equation}
for then applying this identity successively yields our result in the lemma.

Write $\mathbf{a} = [a_1,\dots,a_r]$ and $\mathbf{b} = [b_1,\dots,b_r]$ (padding the shorter one with zeros).
We have the following two partitions:
\begin{align*}
\mathbf{a} \csqcup \mathbf{b} &= [a_1+b_1,  \dots, a_{p}+b_{p},\dots, a_q+b_q,  \dots,  a_r+b_r],\\
\mathbf{a}' \csqcup \mathbf{b} &= [a_1+b_1,  \dots, a_{p}-1+b_{p},\dots, a_q+1+b_q,   \dots,  a_r+b_r].
\end{align*}
Let $\mathbf{u} = \mathbf{a} \csqcup \mathbf{b}$ and $\mathbf{v} = \mathbf{a}' \csqcup \mathbf{b}$.

Denote $\mathbf{u}'=[a_1+b_1,  \dots, a_{p}+b_{p}]$ and $\mathbf{v}'=[a_1+b_1,  \dots, a_{p}-1+b_{p}]$. Then we write $\mathbf{u}=\mathbf{u}'\cup \mathbf{u}''$
and $\mathbf{v}=\mathbf{v}'\cup \mathbf{v}''$. From \cite[Lem. 3.1]{ac03}, we have $\mathbf{u}_B=((\mathbf{u}')^-)_D\cup ((\mathbf{u}'')^+)_B $ and $\mathbf{v}_B=(\mathbf{v}')_D\cup (\mathbf{v}'')_B$ since $\mathbf{u}'$ and $\mathbf{v}'$ both have $p$ (an even number) parts and $|\mathbf{u}'|=|\mathbf{v}'|+1$ is odd.
Note that $(\mathbf{u}')^-=\mathbf{v}'$. Thus we only need to prove that $((\mathbf{u}'')^+)_B=(\mathbf{v}'')_B$, that is,
$$[a_{p+1}+b_{p+1}+1,\dots, a_q+b_q,  \dots,  a_r+b_r]_B=[a_{p+1}+b_{p+1},\dots, a_q+1+b_q,  \dots,  a_r+b_r]_B.$$
Recall that $\mathbf{a}$ has only even parts except for the first part. Now $q$ is the first index such that $q>p$ and $a_q < a_p-1$.
If $p<r$, then $a_p$ has an even row part below it; i.e., $a_{p+1}$ is an even integer smaller than $a_p$. Thus we have $q=p+1$ and we only need to prove that $$[a_{q}+b_{q}+1,a_{q+1}+b_{q+1},\dots,   a_r+b_r]_B=[a_{q}+1+b_{q},a_{q+1}+b_{q+1},\dots,   a_r+b_r]_B,$$
which is trivial.
If $p=r$, we can append $a_{r+1}=b_{r+1}=0$ to $\mathbf{a}$ and $\mathbf{b}$. Thus we still have $q=r+1=p+1$. The remaining arguments are identical.

Hence we have proved that
\[
(\mathbf{a} \csqcup \mathbf{b})_B = (\mathbf{a}' \csqcup \mathbf{b})_B.
\]

By induction, we obtain
\[
(\mathbf{a} \csqcup \mathbf{b})_B = (\mathbf{a}_B \csqcup \mathbf{b})_B ,
\]
as required.

The arguments for types $C$ and $D$ are similar to those for type $B$.
\end{proof}

\begin{Lem}\label{lem:2}
Let ${\bf b}\in \parti(2m)$ such that all of its columns are even and have multiplicity one. Let $\mathbf{a}= \mathbf{s} \csqcup \mathbf{e}\in \parti(2k+1)$, where  
$\mathbf{s} \in \parti_B^{sp}(2k'+1)$ for some integer $k'\leq k$, and $\mathbf{e}$ is a partition every column of which appears with an even multiplicity.
Then we have
\begin{equation}\label{eq:main}
    (\mathbf{a}_B \csqcup \mathbf{b})_B = (\mathbf{a} \csqcup \mathbf{b})_B .
\end{equation}

\end{Lem}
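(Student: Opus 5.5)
The plan is to follow the same strategy as Lemma \ref{lem:collapse-even-shift}: induct on the number of elementary $B$-collapse steps that turn $\mathbf{a}$ into $\mathbf{a}_B$. It suffices to prove the one-step identity
\[
(\mathbf{a}'\csqcup\mathbf{b})_B=(\mathbf{a}\csqcup\mathbf{b})_B,
\]
where $\mathbf{a}'$ is obtained from $\mathbf{a}$ by a single collapse step, and then to check that $\mathbf{a}'$ again has the form $\mathbf{s}\csqcup\mathbf{e}'$ required by the hypotheses, so that the induction continues. The difference from Lemma \ref{lem:collapse-even-shift} is that $\mathbf{b}$ no longer has only even parts. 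Instead, since every column of $\mathbf{b}$ has even length, its rows come in equal pairs: $b_{2i-1}=b_{2i}$ for all $i$. So $\csqcup\,\mathbf{b}$ shifts the rows of $\mathbf{a}$ by equal amounts in each consecutive pair $(2i-1,2i)$. Distinctness of the column lengths of $\mathbf{b}$ is used only to guarantee $b_{2i}\ge b_{2i+1}$ with the pairing rigid; I will check whether it is needed at all.

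The first step is to understand the collapse steps of $\mathbf{a}=\mathbf{s}\csqcup\mathbf{e}$. Since every column of $\mathbf{e}$ occurs with even multiplicity, all rows of $\mathbf{e}$ are even. Hence the odd rows of $\mathbf{a}$ are exactly the shifts of the odd rows of $\mathbf{s}$, and these are never moved by the collapse. Using that $\mathbf{s}$ is special of type $B$, I claim that in each elementary step the index $p$ of the last occurrence of the offending even part $2k$ is even, and that the receiving row is $q=p+1$. Here I append a zero row if $p$ is the last row, as in Lemma \ref{lem:collapse-even-shift}. The argument should use the description of special $B$ partitions: $\mathbf{s}^t$ is of type $B$, so odd/even parts of $\mathbf{s}$ alternate in a controlled way. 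Adding $\mathbf{e}$ columnwise only inserts even-length blocks of rows in pairs, so a maximal block of equal rows of odd multiplicity must end at an even index. I also need that row $p+1$ is even and strictly below $2k-1$.

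Granting this, write $\mathbf{u}=\mathbf{a}\csqcup\mathbf{b}=\mathbf{u}'\cup\mathbf{u}''$ and $\mathbf{v}=\mathbf{a}'\csqcup\mathbf{b}=\mathbf{v}'\cup\mathbf{v}''$, cutting after row $p$. Because $p$ is even, the pairing of $\mathbf{b}$ does not straddle the cut. Then $\mathbf{u}'$ and $\mathbf{v}'$ have an even number of parts, $(\mathbf{u}')^-=\mathbf{v}'$, and $|\mathbf{u}'|$ is odd. So \cite[Lem.~3.1]{ac03} gives
\[
\mathbf{u}_B=((\mathbf{u}')^-)_D\cup((\mathbf{u}'')^+)_B
\quad\text{and}\quad
\mathbf{v}_B=(\mathbf{v}')_D\cup(\mathbf{v}'')_B .
\]
Since $q=p+1$, both $(\mathbf{u}'')^+$ and $\mathbf{v}''$ equal the tail partition with its first row increased by $1$. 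Hence the two collapses agree. Finally, $\mathbf{a}'$ differs from $\mathbf{a}$ by moving one box from row $p$ to row $p+1$; I expect to be able to rewrite it as $\mathbf{s}\csqcup\mathbf{e}'$ (or $\mathbf{s}'\csqcup\mathbf{e}'$ with $\mathbf{s}'$ still special), so that the induction hypothesis applies.

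The main obstacle is this structural claim: that $p$ is always even and that $q=p+1$ throughout the whole collapse of $\mathbf{s}\csqcup\mathbf{e}$, and that the hypotheses persist after each step. If the persistence argument proves awkward, I would fall back on a dominance argument. Since columnwise union is rowwise addition, $\mathbf{a}_B\le\mathbf{a}$ gives $\mathbf{a}_B\csqcup\mathbf{b}\le\mathbf{a}\csqcup\mathbf{b}$, hence $(\mathbf{a}_B\csqcup\mathbf{b})_B\le(\mathbf{a}\csqcup\mathbf{b})_B$. It would then remain to show that $\mathbf{a}_B\csqcup\mathbf{b}$ dominates $(\mathbf{a}\csqcup\mathbf{b})_B$, which I would check on partial sums at even indices using $b_{2i-1}=b_{2i}$.
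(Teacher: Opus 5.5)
Your plan is the paper's proof: induction on elementary $B$-collapse steps, the pairing $b_{2i-1}=b_{2i}$, the cut after the even index $p$ with $q=p+1$, and \cite[Lem.~3.1]{ac03} to reduce everything to $((\mathbf{u}'')^+)_B=(\mathbf{v}'')_B$. The paper's extra reduction to $k_0=1$ is not needed, and you are right that persistence of the hypothesis must be checked, which the paper leaves implicit. The structural claim you flag closes by a parity count: the hypothesis on $\mathbf{a}$ amounts to $a_{2i}\equiv a_{2i+1}\pmod 2$ for all $i\ge 1$ (i.e.\ $\mathbf{a}^t$ is of type $B$), so $a_1$ is odd and the number of even rows among $a_1,\dots,a_p$ is odd (even values above $2k$ have even multiplicity), which forces $p$ even; then $a_{p+1}\equiv a_p$ is even, so $q=p+1$, and the step turns rows $p,p+1$ into odd rows, so the condition persists.
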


\begin{proof}

First we note that all row parts of $\mathbf{e}$ are even.
Because $\mathbf{s}$ is a special  partition of type $B$, its even parts already occur with even
multiplicity; an even part of $\mathbf{a}$ that appears an odd number of times can only be created
by the interaction with the even rows of $\mathbf{e}$, but adding an even number to each part
of $\mathbf{s}$ does not change the parity of the multiplicities of the shifted parts.
Consequently, the ``offending'' even parts of $\mathbf{a}$ (those with odd multiplicity)
all come from the corresponding even parts of $\mathbf{s}$ after translation by the even
row parts of $\mathbf{e}$. Since $\mathbf{s}$ is a special partition of type $B$ and all row parts of $\mathbf{e}$ are even, $\mathbf{a}$ has an even number of odd parts between any consecutive even parts and an odd number of odd parts greater than the largest even part. Thus the first row part of $\mathbf{a}$ is an odd number. 

The condition on $\mathbf{b}$ implies that its transpose
has all parts even and distinct; therefore the rows of $\mathbf{b}$ satisfy
\begin{equation}\label{b-pair}
b_{2i-1}=b_{2i}\qquad (i=1,2,\dots),
\end{equation}
and we can write $\mathbf{b}=[m,m,...,m,m,m-1,m-1,...,m-1,m-1,...,2,2,...,2,2,1,1,...,1,1]$, where each integer $1\leq l\leq m$ occurs with an even multiplicity.

We prove the lemma by induction on the number of elementary collapse steps
needed to turn $\mathbf{a}$ into $\mathbf{a}_B$.
Let $\mathbf{a}'$ be obtained from $\mathbf{a}$ by one $B$‑collapse step.
It suffices to prove
\begin{equation}\label{step}
(\mathbf{a}' \csqcup \mathbf{b})_B = (\mathbf{a} \csqcup \mathbf{b})_B ,
\end{equation}
for then applying this identity successively yields~\eqref{eq:main}.

Assume $\mathbf{a}$ is not of type $B$.
Let $2k$ be the largest even part of $\mathbf{a}$ that occurs an odd number of times,
and let $p$ be the index of the last row of $\mathbf{a}$ that equals $2k$. Then $p$ is an even number. 
One $B$‑collapse step replaces $a_p$ by $2k-1$ and, for the first index $q>p$ with $a_q$ being even and
$a_q < 2k-1$, replaces $a_q$ by $a_q+1$; if no such $q$ exists, a new part $1$ is
appended. We use $\mathbf{a}'$ to denote this new partition obtained from $\mathbf{a}$ by one $B$‑collapse step.

Write $\mathbf{a} = [a_1,\dots,a_r]$ and $\mathbf{b} = [b_1,\dots,b_r]$ (padding the shorter one with zeros).
We have the following two partitions:
\begin{align*}
\mathbf{a} \csqcup \mathbf{b} &= [a_1+b_1,  \dots, a_{p}+b_{p},\dots, a_q+b_q,  \dots,  a_r+b_r],\\
\mathbf{a}' \csqcup \mathbf{b} &= [a_1+b_1,  \dots, a_{p}-1+b_{p},\dots, a_q+1+b_q,   \dots,  a_r+b_r].
\end{align*}
Let $\mathbf{u} = \mathbf{a} \csqcup \mathbf{b}$ and $\mathbf{v} = \mathbf{a}' \csqcup \mathbf{b}$.

Recall that there is an odd number (denote it by $k_0$) of odd parts
greater than the largest even part of  $\mathbf{a}$. Suppose $k_0>1$. After translation by an equal pair of
row parts of $\mathbf{b}$, we obtain a pair of odd parts when $b_1$ is even, and a pair of even parts when $b_1$ is odd. Consequently, after the $B$-collapse, the first two row parts of $\mathbf{u}_B$ and $\mathbf{v}_B$ coincide; they become either $a_1+b_1$ and $a_2+b_2$ or $a_1+b_1-1$ and $a_2+b_2+1$. Thus no box is moved down to the third row or below. Equivalently, we can remove the first two row parts of $\mathbf{u}$ and $\mathbf{v}$ and denote them by $\mathbf{u}_{1,2}=\mathbf{v}_{1,2}=[a_1+b_1,a_2+b_2]$ and denote the remaining partitions of $\mathbf{u}$ and $\mathbf{v}$ by $\mathbf{u}_l$ and $\mathbf{v}_l$, respectively.
Then we have $$\mathbf{u}_B=(\mathbf{u}_{1,2})_D\cup (\mathbf{u}_l)_B$$ and $$\mathbf{v}_B=(\mathbf{v}_{1,2})_D\cup (\mathbf{v}_l)_B.$$
Therefore by induction, we may assume that $k_0=1$.

Denote $\mathbf{u}'=[a_1+b_1,  \dots, a_{p}+b_{p}]$ and $\mathbf{v}'=[a_1+b_1,  \dots, a_{p}-1+b_{p}]$. Then we write $\mathbf{u}=\mathbf{u}'\cup \mathbf{u}''$
and $\mathbf{v}=\mathbf{v}'\cup \mathbf{v}''$. From \cite[Lem. 3.1]{ac03}, we have $\mathbf{u}_B=((\mathbf{u}')^-)_D\cup ((\mathbf{u}'')^+)_B $ and $\mathbf{v}_B=(\mathbf{v}')_D\cup (\mathbf{v}'')_B$ since $\mathbf{u}'$ and $\mathbf{v}'$ both have $p$ (an even number) parts and $|\mathbf{u}'|=|\mathbf{v}'|+1$ is odd.
Note that $(\mathbf{u}')^-=\mathbf{v}'$. Thus we only need to prove that $((\mathbf{u}'')^+)_B=(\mathbf{v}'')_B$, that is,
$$[a_{p+1}+b_{p+1}+1,\dots, a_q+b_q,  \dots,  a_r+b_r]_B=[a_{p+1}+b_{p+1},\dots, a_q+1+b_q,  \dots,  a_r+b_r]_B.$$
Recall that $\mathbf{a}$ has an even number of odd parts between any consecutive even parts. Thus $\mathbf{a}$ has an even number of even parts between any consecutive odd parts and it can only have an odd number of even parts below  the last odd part. Now $q$ is the first index such that $q>p$ and $a_q < a_p-1$.
If $p<r$, $a_p$ has an even row part below it; i.e., $a_{p+1}$ is an even integer smaller than $a_p$. Thus we have $q=p+1$ and we only need to prove that $$[a_{q}+b_{q}+1,a_{q+1}+b_{q+1},\dots,   a_r+b_r]_B=[a_{q}+1+b_{q},a_{q+1}+b_{q+1},\dots,   a_r+b_r]_B,$$
which is trivial.
If $p=r$, we can append $a_{r+1}=b_{r+1}=0$ to $\mathbf{a}$ and $\mathbf{b}$. Thus we still have $q=r+1=p+1$. The remaining arguments are identical.

Hence we have proved that
\[
(\mathbf{a} \csqcup \mathbf{b})_B = (\mathbf{a}' \csqcup \mathbf{b})_B ,
\]
which is~\eqref{step}.

By induction, we obtain
\[
(\mathbf{a} \csqcup \mathbf{b})_B = (\mathbf{a}_B \csqcup \mathbf{b})_B ,
\]
as required.
\end{proof}

\begin{Rem}
The condition $\mathbf{a} = \mathbf{s} \csqcup \mathbf{e}$ cannot be dropped.
For example, take $\mathbf{b}$ with $\mathbf{b}^t = [6,4,2]$ (so $\mathbf{b}=[3,3,2,2,1,1]$) and $\mathbf{a}=[6,3]$.
Here $\mathbf{a}$ fails to be of the required form.
One computes
\[
(\mathbf{a}_B \csqcup \mathbf{b})_B = [7,7,3,1,1,1,1],\qquad
(\mathbf{a} \csqcup \mathbf{b})_B = [9,5,3,1,1,1,1],
\]
and the two sides are not equal.
\end{Rem}

\begin{Lem}\label{lem-C}
Let ${\bf b}\in \parti(2m)$ such that all of its columns are odd and have multiplicity one. Let $\mathbf{a}= \mathbf{s} \csqcup \mathbf{e}\in \parti(2k)$ and $\mathbf{x}=\mathbf{t} \csqcup \mathbf{e}\in \parti(2k)$, where  
$\mathbf{s} \in \parti_C^{sp}(2k')$ and $\mathbf{t} \in \parti_D^{sp}(2k')$ for some integer $k'\leq k$, respectively, and $\mathbf{e}$ is a partition every column of which appears with an even multiplicity. 
Then we have
\[
    ({\bf a}_C \csqcup {\bf b})_C = ({\bf a} \csqcup {\bf b})_C \quad\text{and}\quad ({\bf x}_D \csqcup {\bf b})_D = ({\bf x} \csqcup {\bf b})_D.
\]

\end{Lem}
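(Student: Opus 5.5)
We will follow the template of Lemma \ref{lem:2}, adapting the parity bookkeeping to types $C$ and $D$. First we record the shape of ${\bf b}$: its transpose has distinct odd parts, so its rows satisfy $b_{2i-1}=b_{2i}+1$ or $b_{2i-1}=b_{2i}$ in a controlled pattern. More precisely, writing ${\bf b}^t=[c_1>c_2>\dots]$ with all $c_j$ odd, every row of ${\bf b}$ of index $\le c_j$ receives one box from column $j$. Hence consecutive rows of ${\bf b}$ come in pairs $(b_{2i},b_{2i+1})$ with $b_{2i}=b_{2i+1}$ for $i\ge1$, while $b_1$ stands alone. Thus adding ${\bf b}$ row-wise to a partition preserves the parity pattern of rows $2i,2i+1$ as a pair, and shifts the first row by an arbitrary amount.

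Next we analyse ${\bf a}={\bf s}\csqcup{\bf e}$ (resp. ${\bf x}={\bf t}\csqcup{\bf e}$). As in Lemma \ref{lem:2}, the rows of ${\bf e}$ are even, so adding ${\bf e}$ preserves multiplicity parities of shifted parts. Hence the offending parts of ${\bf a}$ (odd parts of odd multiplicity for type $C$; even parts of odd multiplicity for type $D$) inherit the structural constraint coming from specialness of ${\bf s}$ (resp. ${\bf t}$). We use the characterization from Propositions \ref{Orbit-C} and \ref{orbit-D}: ${\bf s}^t$ is of type $C$, and ${\bf t}^t$ is of type $C$. Concretely, in type $C$ the parts of ${\bf a}$ have an even number of even parts between consecutive odd parts, and an even number of even parts above the largest odd part. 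Consequently, the last occurrence $p$ of the largest offending part $q$ has odd index, and the first subsequent part is $q-1$ or smaller with the right parity. The analogous statement holds in type $D$, with the roles of parities swapped. This mirrors the statement ``$p$ is even'' in Lemma \ref{lem:2}, now shifted by one because ${\bf b}$ pairs rows $(2i,2i+1)$ rather than $(2i-1,2i)$.

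We then induct on the number of elementary collapse steps, reducing to the one-step identity $({\bf a}'\csqcup{\bf b})_C=({\bf a}\csqcup{\bf b})_C$. We split ${\bf u}={\bf a}\csqcup{\bf b}={\bf u}'\cup{\bf u}''$ after row $p$, and likewise ${\bf v}={\bf a}'\csqcup{\bf b}$. We then invoke \cite[Lem. 3.1]{ac03}, which splits a collapse along a cut with prescribed parity of the number of parts. For type $C$ with $p$ odd, this gives ${\bf u}_C=(({\bf u}')^-)_B\cup(({\bf u}'')^+)_C$ and ${\bf v}_C=({\bf v}')_B\cup({\bf v}'')_C$, with $({\bf u}')^-={\bf v}'$. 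It then remains to check $(({\bf u}'')^+)_C=({\bf v}'')_C$. The constraint derived above forces $q=p+1$, which makes the check trivial, just as in Lemma \ref{lem:2}. Type $D$ goes the same way with the $D$-version of the splitting lemma.

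The main obstacle is the middle step: showing that the index $p$ has the parity required by \cite[Lem. 3.1]{ac03} and that the receiving row is $p+1$. The risk is that rows above $p$, paired by ${\bf b}$, could alter where the collapse stops. We would handle this as in Lemma \ref{lem:2}, peeling off leading blocks of rows whose collapse is self-contained, using $b_{2i}=b_{2i+1}$ together with the fact that $b_1$ is the unpaired row. We would treat the first row separately, since that is where type $C$ (even total, first row unconstrained) differs from type $B$. If the parity claim fails in some edge case, for example when $p=r$, we append zero rows as in Lemma \ref{lem:2}.
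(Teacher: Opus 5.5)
Your plan follows the route the paper intends; its proof of this lemma says only that it is analogous to Lemma~\ref{lem:2}. However, the key splitting step cannot hold as written. You assert ${\bf u}_C=(({\bf u}')^-)_B\cup(({\bf u}'')^+)_C$ and ${\bf v}_C=({\bf v}')_B\cup({\bf v}'')_C$. Since $|({\bf u}')^-|+|({\bf u}'')^+|=|{\bf u}|$ is even, the two pieces have sizes of the same parity. So one piece cannot be $B$-collapsed while the other is $C$-collapsed.

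The slip comes from two omissions. You never computed the parity of $|{\bf u}'|$, which together with the parity of $p$ is what the splitting in Lemma~\ref{lem:2} is applied with. You also treated $b_1$ as an arbitrary shift. In fact $b_1$ is the number of columns of ${\bf b}$, and $b_1$ odd column lengths sum to $2m$, so $b_1$ is even. Combined with $b_{2i}=b_{2i+1}$, this makes $b_1+\dots+b_p$ even for odd $p$. Hence $|{\bf u}'|\equiv a_1+\dots+a_p \pmod 2$, which is odd in type $C$ and even in type $D$. So in type $C$ the top piece has even size after losing its box, and any correct splitting would have to $C$-collapse it, not $B$-collapse it. Your type $D$ sentence (``the $D$-version of the splitting lemma'') leaves open which case applies.

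Second, the structural input you state for ${\bf a}$ does not force the receiving row to be $p+1$. For example, $[3,2,2,1]$ has an even number of even parts between consecutive odd parts and above its largest odd part. Yet its first $C$-collapse step moves a box from row $1$ to row $4$. Adding ${\bf e}$ also need not preserve multiplicity parities: $[3,3]\csqcup[2]=[5,3]$.

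What is true, and what you need, is the row pairing $a_{2i-1}\equiv a_{2i}\pmod 2$ for all $i$, zeros included. This is exactly the condition that ${\bf s}^t$ (resp.\ ${\bf t}^t$) is of type $C$. It survives adding the even rows of ${\bf e}$, and it survives each collapse step. It gives $p$ odd and $a_{p+1}\equiv a_p$, so $a_{p+1}\le a_p-2$ and the receiving row is $p+1$.

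With this you can drop both the splitting lemma and the peeling step. The partition ${\bf v}$ is ${\bf u}$ with one box moved from row $p$ to row $p+1$. So ${\bf v}_C={\bf u}_C$ as soon as the $p$-th partial sum of ${\bf w}={\bf u}_C$ is smaller than that of ${\bf u}$: then ${\bf w}\le{\bf v}\le{\bf u}$, and maximality of the collapse gives equality.

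Suppose instead the two partial sums were equal. This common value is odd. For a partition whose odd parts have even multiplicity, an odd partial sum at $p$ forces $w_p=w_{p+1}$. Dominance then gives $u_p\le w_p=w_{p+1}\le u_{p+1}$, contradicting $u_p>u_{p+1}$; this inequality holds because $a_p\ge a_{p+1}+2$ and $b_p\ge b_{p+1}$.

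Type $D$ works the same way. There the partial sum at $p$ is even while $p$ is odd. For a type $D$ partition, $w_p>w_{p+1}$ forces an even number of even parts among $w_1,\dots,w_p$, that is, $w_1+\dots+w_p\equiv p \pmod 2$. So equality again forces $w_p=w_{p+1}$, and the same contradiction follows.
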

\begin{proof}
   The proof is analogous to that of Lemma \ref{lem:2} and is therefore omitted.
\end{proof}

\begin{Lem}\label{C-empty}
Let ${\bf a}$ be a special partition of type $B$ and let ${\bf b}$ be a partition all of whose parts are even.
Then
\[
({\bf a} \csqcup {\bf b})^C = ({\bf a}^C \csqcup {\bf b})_C.
\]
\end{Lem}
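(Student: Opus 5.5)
We need to show $({\bf a}\csqcup{\bf b})^C=({\bf a}^C\csqcup{\bf b})_C$, where $\mathbf{p}^C=((\mathbf{p})^-)_C$.

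The plan is to unwind the definition of $(\cdot)^C$ and reduce the claim to Lemma \ref{lem:collapse-even-shift}. First we record that ${\bf a}\csqcup{\bf b}$ is a partition of an odd integer, since ${\bf b}$ has only even parts. So $({\bf a}\csqcup{\bf b})^-$ makes sense, with the convention that $\mathbf{p}^-$ subtracts $1$ from the smallest part. We will apply $(\cdot)^-$ to ${\bf a}\csqcup{\bf b}$ as an arbitrary partition and need not first check that it is of type $B$. The left side is then $(({\bf a}\csqcup{\bf b})^-)_C$, and the right side is $(({\bf a}^-)_C\csqcup{\bf b})_C$.

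By Lemma \ref{lem:collapse-even-shift} (the type $C$ identity with ${\bf x}={\bf a}^-\in\parti(2m)$), the right side equals $({\bf a}^-\csqcup{\bf b})_C$. It therefore suffices to prove
\[
(({\bf a}\csqcup{\bf b})^-)_C=({\bf a}^-\csqcup{\bf b})_C .
\]

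Write ${\bf a}=[a_1,\dots,a_r]$ with $a_r>0$, and pad ${\bf b}=[b_1,\dots,b_s]$ with zeros. There are two cases.

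\emph{Case $s\le r$.} The last nonzero part of ${\bf a}\csqcup{\bf b}$ is $a_r+b_r$. Both sides are then the $C$-collapse of the same sequence $[a_1+b_1,\dots,a_r+b_r-1]$, so they coincide.

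\emph{Case $s>r$.} This is the real obstacle. Here $({\bf a}\csqcup{\bf b})^-$ lowers the part $b_s$ (even) in row $s$, whereas ${\bf a}^-\csqcup{\bf b}$ lowers $a_r+b_r$ in row $r$. The two partitions differ by moving one box from row $r$ down to row $s$.

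We will use that ${\bf a}$ is special of type $B$, hence odd-length-friendly. Its smallest part $a_r$ is odd, and by the parity structure used in the proof of Lemma \ref{lem:2} the number of parts $r$ is odd. So rows $r+1,\dots,s$ of ${\bf a}\csqcup{\bf b}$ are the even parts $b_{r+1},\dots,b_s$. Row $r$ is odd, since $a_r$ is odd and $b_r$ is even.

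In ${\bf a}^-\csqcup{\bf b}$ the part $a_r+b_r-1$ becomes even. We then track the $C$-collapse: the odd parts with odd multiplicity are exactly where rows $r$ and $s$ differ. We expect the collapse of ${\bf a}^-\csqcup{\bf b}$ to push the box from row $r$ downward through the even block $b_{r+1}\ge\dots\ge b_s$. At each step the lowered part becomes odd and forces the next even part to be raised, until the box lands in row $s$. The result agrees with $({\bf a}\csqcup{\bf b})^-$ collapsed.

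To make this rigorous we will split off the top rows $1,\dots,r-1$ using \cite[Lem. 3.1]{ac03}, as in the proofs of Lemmas \ref{lem:collapse-even-shift} and \ref{lem:2}. Since $r-1$ is even, this block is unaffected. It remains to compare the $C$-collapses of
\[
[a_r+b_r-1,b_{r+1},\dots,b_s] \quad\text{and}\quad [a_r+b_r,b_{r+1},\dots,b_s-1].
\]
The first has a single odd part on top. The second has odd parts $a_r+b_r$ and $b_s-1$. Both have even total size with all other parts even, and we will check by a direct computation of the maximal $C$-partition dominated that both collapse to the same partition. We expect this last computation, where equal even parts of ${\bf b}$ may cause odd multiplicities to propagate, to need the most care.
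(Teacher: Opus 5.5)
Your reduction matches the paper's: Lemma~\ref{lem:collapse-even-shift} turns the right side into $(\mathbf{a}^-\csqcup\mathbf{b})_C$, the case $s\le r$ is trivial, and the case $s>r$ becomes a comparison of two collapses. The gap is in that last case, which rests on the claim that the smallest part $a_r$ of a special type $B$ partition is odd. \emph{This is false.} Speciality means $\mathbf{a}^t$ is of type $B$, i.e.\ $a_{2j}\equiv a_{2j+1} \pmod 2$ for $j\ge 1$. This forces $a_1$ odd and $r$ odd, but leaves the parity of $a_r$ free.

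For instance $\mathbf{a}=[3,2,2]$ is special ($\mathbf{a}^t=[3,3,1]$). With $\mathbf{b}=[2,2,2,2]$ you get $\mathbf{a}^-\csqcup\mathbf{b}=[5,4,3,2]$, whose $C$-collapse is $[4,4,4,2]$. So the block of rows $1,\dots,r-1$ is \emph{not} unaffected. Your two bottom pieces $[3,2]$ and $[4,1]$ also have odd size, so their $C$-collapses are not even defined. Whether a top block splits off cleanly under $C$-collapse depends on the parity of its size, not of its number of rows, and rows $1,\dots,r-1$ have size $\equiv 1+a_r \pmod 2$. Your last paragraph already shows the slip: a block with one odd part and all other parts even cannot have even size. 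When $a_r$ is odd, $[a_r+b_r-1,b_{r+1},\dots,b_s]$ is entirely even. The case $a_r$ even is exactly the second subcase of the paper's proof, where the unit that ends up in row $s$ comes from an odd row strictly above row $r$.

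To repair your route, let $k'\le r$ be the largest index with $a_{k'}$ odd. The parity pattern shows that $k'$ is odd and that rows $1,\dots,k'-1$ have even total size. These rows coincide in both partitions, so they split off via \cite[Lem.~3.1]{ac03}.
\begin{itemize}
\item If $k'=r$, one remaining block is all even. The other collapses to it by pushing the unit from row $r$ down to row $s$.
\item If $k'<r$, each remaining block has exactly two odd parts, each of multiplicity one: rows $k'$ and $r$, resp.\ rows $k'$ and $s$. All other parts are even. The collapse rule skips runs of equal parts, so repeated $b_j$ cause no trouble. The unit from row $k'$ is passed down through the even rows until it reaches the other odd row.
\end{itemize}
In all cases both blocks collapse to rows $k',\dots,s$ of $\mathbf{a}\csqcup\mathbf{b}$ with row $k'$ lowered by one. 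This is the computation you deferred.
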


\begin{proof}

First we claim that the following holds:
\begin{equation}\label{eq:shift}
\bigl((\mathbf{a} \csqcup \mathbf{b})^-\bigr)_C = (\mathbf{a}^- \csqcup \mathbf{b})_C .
\end{equation}

Write $\mathbf{a} = [a_1,\dots,a_r]$ and $\mathbf{b} = [b_1,\dots,b_r]$ (padding the shorter one with zeros).

When ${\bf b}$ is shorter, we obviously have  $$(\mathbf{a} \csqcup \mathbf{b})^-=\mathbf{a}^- \csqcup \mathbf{b}.$$ 
Thus  equation (\ref{eq:shift}) follows from this identity.

Now assume that $\mathbf{a}$ is shorter. 
Let $k$ be the index of the last positive part of $\mathbf{a}$, so $a_k > 0$ and $a_i = 0$ for $i > k$.
  The two partitions are
\begin{align*}
\mathbf{a} \csqcup \mathbf{b} &= [a_1+b_1,  \dots, a_{k-1}+b_{k-1},  a_k+b_k,  b_{k+1},  \dots,  b_r],\\
\mathbf{a}^- \csqcup \mathbf{b} &= [a_1+b_1,  \dots, a_{k-1}+b_{k-1},  a_k-1+b_k,  b_{k+1},  \dots,  b_r],
\end{align*}
while $(\mathbf{a} \csqcup \mathbf{b})^-$ is obtained from the first sequence by subtracting $1$ from its \emph{last} part $b_r$ (if $a_k+b_k$ is not the last part). 
Thus $(\mathbf{a} \csqcup \mathbf{b})^-$ and $\mathbf{a}^- \csqcup \mathbf{b}$ differ in exactly two places: one has a  `$-1$' at position $r$, the other has a `$-1$' at position $k$.

Let $\mathbf{u} = (\mathbf{a} \csqcup \mathbf{b})^-$ and $\mathbf{v} = \mathbf{a}^- \csqcup \mathbf{b}$.
Observe that $\mathbf{u}$ and $\mathbf{v}$ have the same list of parts except that $\mathbf{u}$ contains a part $b_r-1$ and a part $a_k+b_k$, while $\mathbf{v}$ contains $b_r$ and $a_k-1+b_k$. 
Because every $b_i$ is even, the parity of the affected entries changes exactly by $1$, and all other parts keep their parity.
The $C$‑collapse algorithm repeatedly reduces the largest odd part that occurs an odd number of times.
Note that the extra odd part $b_r-1$ appearing in $\mathbf{u}$  is the largest offending odd part, and it will be changed to $b_r$ after the final $C$-collapse. Thus there is a unit   `$1$' coming from some previous odd part of $\mathbf{u}$. If it comes from $a_k+b_k$, we will have $\mathbf{u}_C=[\cdots,a_k-1+b_k,b_{k+1},\cdots,b_r]=\mathbf{v}_C$ since the last $r-k$ parts of $\mathbf{u}_C$ and $\mathbf{v}$ are the same.
If the unit `$1$' comes from some odd number $a_{k'}+b_{k'}>a_k+b_k$ for some $k'<k$, then $a_k+b_k$ will be an even number. So $a_k-1+b_k$ will be an odd number and will occur with multiplicity one in ${\bf v}$.  After several steps of $C$-collapse, ${\bf v}$ will be changed to some ${\bf v}'=[\cdots,a_{k'}+b_{k'},\cdots,a_k-1+b_k,b_{k+1},\cdots,b_r]$. Now $a_{k'}+b_{k'}$ and $a_k-1+b_k$ are both odd numbers and occur with odd multiplicity. We continue the $C$-collapse for ${\bf v}'$ and will get ${\bf v}_C=[\cdots,a_{k'}+b_{k'}-1,\cdots,a_k+b_k,b_{k+1},\cdots,b_r]={\bf u}_C$ since the last $r-k$ parts of $\mathbf{v}_C$ and $\mathbf{u}$ are the same.

Using Lemma \ref{lem:collapse-even-shift} with $\mathbf{x} = \mathbf{a}^-$ we have
\[
(\mathbf{a}^- \csqcup \mathbf{b})_C = \bigl((\mathbf{a}^-)_C \csqcup \mathbf{b}\bigr)_C = (\mathbf{a}^C \csqcup \mathbf{b})_C .
\]
On the other hand, by definition $(\mathbf{a} \csqcup \mathbf{b})^C = \bigl((\mathbf{a} \csqcup \mathbf{b})^-\bigr)_C$, which by \eqref{eq:shift} equals $(\mathbf{a}^- \csqcup \mathbf{b})_C$.
Combining the equalities yields
\[
(\mathbf{a} \csqcup \mathbf{b})^C = (\mathbf{a}^C \csqcup \mathbf{b})_C .
\]
\end{proof}

\subsection{Proof for type $B_n$}
Let $L(\lam)$ be a highest weight module of $\mathfrak{so}(2n+1,\mathbb{C})$ and
		$[\lambda]=(\lambda)_{(0)} \cup (\lambda)_{(\frac{1}{2})}\cup [\lambda]_3$
		with $[\lambda]_3=\{{\lambda}_{{Y}_1},\dots,{\lambda}_{{Y}_m}\}$. Denote
 $H:=\Phi_{[\lambda]}$.

Denote $x_1= (\lambda)_{(0)}$ and $ x_2=(\lambda)_{(\frac{1}{2})}$. For each $1\leq i\leq 2$,
  we have two special partitions ${\bf q}_{i}:=p_{B}(x_i^-)^s$ of type $B$ and  ${\bf q}^{\vee}_{i}:=p_{C}(x_i^-)^s$ of type $C$. For $\lambda_{Y_i}\in [\lambda]_3$, we have a special partition ${\bf p}_{i}:=p(\tilde{\lambda}_{Y_i})$ of type $A$.

Then we have 
   $$\pi_{w_{\lam}}={\bf q}_{1}\times {\bf q}_{2}\times \prod\limits_{1\leq i\leq k}{\bf p}_{i} \in {\rm Irr}(W(H)), $$ $$\pi^{\vee}_{w_{\lam}}={\bf q}^{\vee}_1\times {\bf q}^{\vee}_2\times \prod\limits_{1\leq i\leq k}{\bf p}_{i} \in {\rm Irr}(W(H^{\vee})), $$ 
   and 
$$d_{\rm LS}^{H^\vee}(\mathcal{O}(\pi^{\vee}_{w_{\lam}}))=({\bf q}^{\vee}_{1})^t\times ({\bf q}^{\vee}_{2})^t\times \prod\limits_{1\leq i\leq k}{\bf p}_{i}^t = {\rm sat}_{L^\vee}^{H^\vee} (\mathcal{O}_{L^\vee}),$$
where $L^\vee \subset \Phi^\vee$ and $\mathcal{O}_{L^\vee}$ is a distinguished orbit of $L^\vee$.

Now we give our proof for the type $B$.

\begin{enumerate}
    \item When $({\bf q}^{\vee}_{2})^t$ contains no  even parts  with odd multiplicity, let $\mathbf{v}=\varnothing$ and $\mathbf{h}=({\bf q}^{\vee}_{1})^t\cup ({\bf q}^{\vee}_{2})^t\cup(\bigcup\limits_{1\leq i\leq k}({\bf p}_{i}^t\cup {\bf p}_{i}^t))$. Then from Corollary \ref{type-b-som}, we have
\begin{align*}\mathcal{O}_{{\rm Ann}(L(\lambda))} &= d_{\rm Som}^{C}(L^{\vee}, \mathcal{O}_{L^\vee})=d_{\text{Som}}(\mathbf{v},\mathbf{h})=(\mathbf{v} \cup \mathbf{h}^B)^t_B=(\mathbf{h}^B)^t_B   \\
&=(((\mathbf{h}_{+})^{t}_B)^{t})^t_B  \quad \quad\quad\quad\quad \text{by (\ref{lam-B})}\\
&=(\mathbf{h}_{+})^{t}_B\\
&=\left( ({\bf q}^{\vee}_{1})^t\cup ({\bf q}^{\vee}_{2})^t\cup(\bigcup\limits_{1\leq i\leq k}({\bf p}_{i}^t\cup {\bf p}_{i}^t))\cup [1]   \right)^t_B\\
&=\left({\bf q}^{\vee}_{1}\csqcup {\bf q}^{\vee}_{2}\csqcup \left(\csqcup_{i}2{\bf p}_{i}\right)\csqcup [1]\right)_B\\
&=\left(({\bf q}^{\vee}_{1}\csqcup [1])\csqcup {\bf q}^{\vee}_{2}\csqcup \left(\csqcup_{i}2{\bf p}_{i}\right)\right)_B.
\end{align*}
Now $\csqcup_{i}2{\bf p}_{i}$ consists of even parts. In
addition, each part of  $({\bf q}^{\vee}_{2})^t$ occurs with an even multiplicity, so  $ {\bf q}^{\vee}_{2}$ consists of even parts. 
 From Lemma \ref{lem:collapse-even-shift}, we must have $$\left(({\bf q}^{\vee}_{1}\csqcup [1])\csqcup {\bf q}^{\vee}_{2}\csqcup \left(\csqcup_{i}2{\bf p}_{i}\right)\right)_B=\left(({\bf q}^{\vee}_{1}\csqcup [1])_B\csqcup {\bf q}^{\vee}_{2}\csqcup \left(\csqcup_{i}2{\bf p}_{i}\right)\right)_B.$$
 Also we know that
$({\bf q}^{\vee}_{1}\csqcup [1])_B={\bf p}_0$ and ${\bf q}^{\vee}_{2}={\bf p}_{\frac{1}{2}}$. Thus we have
$$\mathcal{O}_{{\rm Ann}(L(\lambda))}=\left(\mathbf{p}_0  \overset{c}{\sqcup}  \mathbf{p}_{\frac{1}{2}}  \overset{c}{\sqcup} \left( \overset{c}{\sqcup_i}  2\mathbf{p}_i \right)\right)_B.$$

This completes the proof for this case.
\item When  $({\bf q}^{\vee}_{2})^t$  contains  even parts  with odd multiplicity, let $\mathbf{v}$ denote the partition formed by the largest such even parts of $({\bf q}^{\vee}_{2})^t$, each appearing exactly once. Let $\mathbf{h}$ denote the remaining parts in $({\bf q}^{\vee}_{1})^t\cup ({\bf q}^{\vee}_{2})^t\cup(\bigcup\limits_{1\leq i\leq k}({\bf p}_{i}^t\cup {\bf p}_{i}^t))$.

We know that $\csqcup_i (2{\bf p}_{i})$ contains only even parts. Thus each column of it occurs with an even multiplicity. 
From the construction of $\mathbf{v}$, we know that ${\bf q}^{\vee}_{2}\csqcup (-\mathbf{v}^t)$ consists of odd columns with even multiplicity and even columns with even multiplicity.
Thus each column of it occurs with an even multiplicity.

Then from Corollary \ref{type-b-som}, we have
\begin{align*}
    \mathcal{O}_{{\rm Ann}(L(\lambda))} &= d_{\rm Som}^{C}(L^{\vee}, \mathcal{O}_{L^\vee})=d_{\text{Som}}(\mathbf{v},\mathbf{h})=(\mathbf{v} \cup \mathbf{h}^B)^t_B\\
    &=(\mathbf{v}^t \csqcup (\mathbf{h}^t)^+_B)_B\quad \quad\quad\text{by Lemma \ref{pm-col}}
    \\
    &=(\mathbf{v}^t \csqcup (\mathbf{h}^t\csqcup [1])_B)_B\\
    &=\left(\mathbf{v}^t \csqcup \left({\bf q}^{\vee}_{1}\csqcup {\bf q}^{\vee}_{2}\csqcup (-\mathbf{v}^t)\csqcup(\csqcup_i 2{\bf p}_{i})\csqcup [1]\right)_B   \right)_B\\
=&\left(\mathbf{v}^t  \csqcup \left(({\bf q}^{\vee}_{1}\csqcup [1])_B\csqcup {\bf q}^{\vee}_{2}\csqcup (-\mathbf{v}^t)\csqcup(\csqcup_i 2{\bf p}_{i})\right)_B \right)_B \\  
& \quad \quad \quad \quad \quad \quad \quad \quad \quad \quad \quad  \text{by Lemma \ref{lem:collapse-even-shift}}\\
=&\left(\mathbf{v}^t  \csqcup ({\bf q}^{\vee}_{1}\csqcup [1])_B\csqcup {\bf q}^{\vee}_{2}\csqcup (-\mathbf{v}^t)\csqcup(\csqcup_i 2{\bf p}_{i}) \right)_B \\
& \quad \quad \quad \quad \quad \quad \quad \quad \quad \quad \quad  \text{by Lemma \ref{lem:2}}\\
   =&\left(({\bf q}^{\vee}_{1}\csqcup [1])_B\csqcup {\bf q}^{\vee}_{2}\csqcup(\csqcup_i 2{\bf p}_{i}) \right)_B \\
    =&\left(\mathbf{p}_0  \overset{c}{\sqcup}  \mathbf{p}_{\frac{1}{2}}  \overset{c}{\sqcup} \left( \overset{c}{\sqcup_i}  2\mathbf{p}_i \right)\right)_B.
\end{align*}

This completes the proof for this case.
\end{enumerate}

 \subsection{Proof for type $C_n$}

Let $L(\lam)$ be a highest weight module of $\mathfrak{sp}(2n,\mathbb{C})$ and
		$[\lambda]=(\lambda)_{(0)} \cup (\lambda)_{(\frac{1}{2})}\cup [\lambda]_3$
				with $[\lambda]_3=\{{\lambda}_{{Y}_1},\dots,{\lambda}_{{Y}_m}\}$.
    Denote
 $H:=\Phi_{[\lambda]}$.

For $x= (\lambda)_{(0)}$,
  we have two special partitions ${\bf p}_0={\bf q}_{1}:=p_{C}(x^-)^s$ of type $C$ and  ${\bf q}_{1}^B={\bf q}^{\vee}_{1}:=p_{B}(x^-)^s$ of type $B$. For $y=(\lambda)_{(\frac{1}{2})}$,
  we have a special partition ${\bf q}_{2}={\bf q}^{\vee}_{2}:=p_{D}(y^-)^s$ of type $D$. For $\lambda_{Y_i}\in [\lambda]_3$, we have a special partition ${\bf p}_{i}:=p(\tilde{\lambda}_{Y_i})$ of type $A$.

Then we have 
   $$\pi_{w_{\lam}}={\bf q}_{1}\times {\bf q}_{2}\times \prod\limits_{1\leq i\leq k}{\bf p}_{i} \in {\rm Irr}(W(H)), $$ $$\pi^{\vee}_{w_{\lam}}={\bf q}^{\vee}_1\times {\bf q}_2\times \prod\limits_{1\leq i\leq k}{\bf p}_{i} \in {\rm Irr}(W(H^{\vee})), $$ 
   and 
$$d_{\rm LS}^{H^\vee}(\mathcal{O}(\pi^{\vee}_{w_{\lam}}))=({\bf q}^{\vee}_{1})^t\times ({\bf q}_{2}^t)_D\times \prod\limits_{1\leq i\leq k}{\bf p}_{i}^t = {\rm sat}_{L^\vee}^{H^\vee} (\mathcal{O}_{L^\vee}),$$
where $L^\vee \subset \Phi^\vee$ and $\mathcal{O}_{L^\vee}$ is a distinguished orbit of $L^\vee$.

Now we give our proof for the type $C$.

\begin{enumerate}
   \item When  $({\bf q}_{2}^t)_D$ contains no odd parts  with odd multiplicity, let $\mathbf{v}=\varnothing$ and $\mathbf{h}=({\bf q}^{\vee}_{1})^t\cup ({\bf q}_{2}^t)_D\cup(\bigcup\limits_{1\leq i\leq k}({\bf p}_{i}^t\cup {\bf p}_{i}^t))$.  Now $({\bf q}_{2}^t)_D$ is a special partition of type $D$ and each part of it has an even multiplicity, so  $(({\bf q}_{2}^t)_D)^t$ is a partition of type $C$ and each part of it is even.
   
    Then from Corollary \ref{type-c-som}, we have
\begin{align*}\mathcal{O}_{{\rm Ann}(L(\lambda))} &= d_{\rm Som}^{B}(L^{\vee}, \mathcal{O}_{L^\vee})=d_{\text{Som}}(\mathbf{v},\mathbf{h})=(\mathbf{v} \cup \mathbf{h}^C)^t_C=(\mathbf{h}^C)^t_C\\
&=((\mathbf{h}^{-})_{C})^{t}_C\\
&=((\mathbf{h}^{t-})_{C})_C \quad \quad \quad \text{by Lemma \ref{pm-col}}\\
&=(\mathbf{h}^{t-})_{C}\\
&=(\mathbf{h}^{t})^{C}\\
&=\left({\bf q}^{\vee}_{1}\csqcup (({\bf q}_{2}^t)_D)^t\csqcup \left(\csqcup_{i}2{\bf p}_{i}\right)\right)^C\\
&=\left(({\bf q}^{\vee}_{1})^C\csqcup (({\bf q}_{2}^t)_D)^t\csqcup \left(\csqcup_{i}2{\bf p}_{i}\right)\right)_C \quad \text{by Lemma \ref{C-empty}}\\
&=\left(\mathbf{p}_0  \overset{c}{\sqcup}  \mathbf{p}_{\frac{1}{2}}  \overset{c}{\sqcup} \left( \overset{c}{\sqcup_i}  2\mathbf{p}_i \right)\right)_C \quad \text{since $({\bf q}^{\vee}_{1})^C=({\bf q}^{B}_{1})^C={\bf q}_1={\bf p}_0$}.
\end{align*}
This completes the proof for this case.
    
    \item When   $({\bf q}_{2}^t)_D$  contains odd parts  with odd multiplicity, let $\mathbf{v}$ denote the partition formed by the maximal such odd parts of  $({\bf q}_{2}^t)_D$, each appearing exactly once. Let $\mathbf{h}$ denote the remaining parts in $({\bf q}^{\vee}_{1})^t\cup ({\bf q}_{2}^t)_D\cup(\bigcup\limits_{1\leq i\leq k}({\bf p}_{i}^t\cup {\bf p}_{i}^t))$.
    Now $({\bf q}_{2}^t)_D\cup (-\mathbf{v})$  is a special partition of type $D$ and each part of it has an even multiplicity, so  $(({\bf q}_{2}^t)_D)^t\csqcup (-\mathbf{v}^t)$ is a partition  with the property that every column
occurs with an even multiplicity.

Then from Corollary \ref{type-c-som},  we have
\begin{align*}
    \mathcal{O}_{{\rm Ann}(L(\lambda))} &= d_{\rm Som}^{B}(L^{\vee}, \mathcal{O}_{L^\vee})=d_{\text{Som}}(\mathbf{v},\mathbf{h})\\&=(\mathbf{v} \cup \mathbf{h}^C)^t_C=(\mathbf{v}^t \csqcup (\mathbf{h}^C)^t)_C
    \\
    &=(\mathbf{v}^t \csqcup ((\mathbf{h}^-)_C)^t)_C
    \\
    &=(\mathbf{v}^t \csqcup (\mathbf{h}^{t-})_C)_C\quad \quad\quad\quad\quad\quad \text{by Lemma \ref{pm-col}}
    \\
    &=\left(\mathbf{v}^t \csqcup \left({\bf q}^{\vee}_{1}\csqcup (({\bf q}_{2}^t)_D)^t\csqcup (-\mathbf{v}^t)\csqcup(\csqcup_i 2{\bf p}_{i})\right)^C   \right)_C\\
    &=\left(\mathbf{v}^t \csqcup \left(({\bf q}^{\vee}_{1})^C\csqcup (({\bf q}_{2}^t)_D)^t\csqcup (-\mathbf{v}^t)\csqcup(\csqcup_i 2{\bf p}_{i})\right)_C   \right)_C\\
    & \qquad\qquad\qquad\qquad\qquad\qquad\qquad \text{by Lemma \ref{C-empty}}\\
    &=\left(\mathbf{v}^t \csqcup ({\bf q}^{\vee}_{1})^C\csqcup (({\bf q}_{2}^t)_D)^t\csqcup (-\mathbf{v}^t)\csqcup(\csqcup_i 2{\bf p}_{i})\right)_C \\
    & \quad \quad \quad \quad \quad \quad \quad \quad \quad \quad \quad \quad\quad  \quad   \text{by Lemma \ref{lem-C}}\\
    &=\left( ({\bf q}^{\vee}_{1})^C\csqcup (({\bf q}_{2}^t)_D)^t\csqcup (\csqcup_i 2{\bf p}_{i})\right)_C\\
    &=\left(\mathbf{p}_0  \overset{c}{\sqcup}  \mathbf{p}_{\frac{1}{2}}  \overset{c}{\sqcup} \left( \overset{c}{\sqcup_i}  2\mathbf{p}_i \right)\right)_C \quad \quad   \text{since $({\bf q}^{\vee}_{1})^C=({\bf q}^{B}_{1})^C={\bf q}_1={\bf p}_0$}.
\end{align*}

   This completes the proof for this case.
\end{enumerate}

 \subsection{Proof for type $D_n$}
Let $L(\lam)$ be a highest weight module of $\mathfrak{so}(2n,\mathbb{C})$ and
		$[\lambda]=(\lambda)_{(0)} \cup (\lambda)_{(\frac{1}{2})}\cup [\lambda]_3$
				with $[\lambda]_3=\{{\lambda}_{{Y}_1},\dots,{\lambda}_{{Y}_m}\}$.
    Denote
 $H:=\Phi_{[\lambda]}$.

Denote $x_1= (\lambda)_{(0)}$ and $ x_2=(\lambda)_{(\frac{1}{2})}$. For each $1\leq i\leq 2$,
  we have a special partition ${\bf q}^{\vee}_{i}={\bf q}_{i}:=p_{D}(x_i^-)^s$ of type $D$. For $\lambda_{Y_i}\in [\lambda]_3$, we have a special partition ${\bf p}_{i}:=p(\tilde{\lambda}_{Y_i})$ of type $A$.
  
Then we have 
   $$\pi_{w_{\lam}}=\pi^{\vee}_{w_{\lam}}={\bf q}_{1}\times {\bf q}_{2}\times \prod\limits_{1\leq i\leq k}{\bf p}_{i} \in {\rm Irr}(W(H)) $$ 
   and
$$d_{\rm LS}^{H^\vee}(\mathcal{O}(\pi^{\vee}_{w_{\lam}}))=({\bf q}_{1}^t)_D\times ({\bf q}_{2}^t)_D\times \prod\limits_{1\leq i\leq k}{\bf p}_{i}^t = {\rm sat}_{L^\vee}^{H^\vee} (\mathcal{O}_{L^\vee}),$$
where $L^\vee \subset \Phi^\vee$ and $\mathcal{O}_{L^\vee}$ is a distinguished orbit of $L^\vee$.

Now we give our proof for the type $D$.

\begin{enumerate}
    \item When  $({\bf q}_{2}^t)_D$ contains no odd parts  with odd multiplicity, let $\mathbf{v}=\varnothing$ and $\mathbf{h}=({\bf q}_{1}^t)_D\cup ({\bf q}_{2}^t)_D\cup(\bigcup\limits_{1\leq i\leq k}({\bf p}_{i}^t\cup {\bf p}_{i}^t))$.
    Now $({\bf q}_{2}^t)_D$ is a special partition of type $D$ and each part of it has an even multiplicity, so  $(({\bf q}_{2}^t)_D)^t$ is a partition of type $C$ and each part of it is even.

    Then from Corollary \ref{type-D-som}, we have
\begin{align*}\mathcal{O}_{{\rm Ann}(L(\lambda))} &= d_{\rm Som}^{D}(L^{\vee}, \mathcal{O}_{L^\vee})=d_{\text{Som}}(\mathbf{v},\mathbf{h})=(\mathbf{v} \cup (\mathbf{h}^t_D)^t)^t_D\\
&=\mathbf{h}^t_D\\
&=\left(  ({\bf q}_{1}^t)_D\cup ({\bf q}_{2}^t)_D\cup(\bigcup\limits_{1\leq i\leq k}({\bf p}_{i}^t\cup {\bf p}_{i}^t)) \right)^t_D\\
&=\left((({\bf q}_{1}^t)_D)^t\csqcup (({\bf q}_{2}^t)_D)^t\csqcup \left(\csqcup_{i}2{\bf p}_{i}\right)\right)_D\\
&=\left(((({\bf q}_{1}^t)_D)^t)_D\csqcup (({\bf q}_{2}^t)_D)^t\csqcup \left(\csqcup_{i}2{\bf p}_{i}\right)\right)_D\\
& \quad \quad  \quad \quad \quad \quad \quad \quad \quad  \text{by Lemma \ref{lem:collapse-even-shift}}\\
 &=\left(\mathbf{p}_0  \overset{c}{\sqcup}  \mathbf{p}_{\frac{1}{2}}  \overset{c}{\sqcup} \left( \csqcup_i  2{\bf p}_i \right)\right)_D,
\end{align*}
where the last identity follows from \cite[Cor. 6.3.6]{CM} and the fact that  ${\bf q}_{1}={\bf p}_0$ is special.

This completes the proof for this case.
    
    \item When  $({\bf q}_{2}^t)_D$ contains  odd parts  with odd multiplicity, let $\mathbf{v}$ denote the partition formed by the maximal such odd parts of  $({\bf q}_{2}^t)_D$, each appearing exactly once. Let $\mathbf{h}$ denote the remaining parts in $({\bf q}_{1}^t)_D\cup ({\bf q}_{2}^t)_D\cup(\bigcup\limits_{1\leq i\leq k}({\bf p}_{i}^t\cup {\bf p}_{i}^t))$.
Now $({\bf q}_{2}^t)_D\cup (-\mathbf{v})$  is a special partition of type $D$ and each part of it has an even multiplicity, so  $(({\bf q}_{2}^t)_D)^t\csqcup (-\mathbf{v}^t)$ is a partition  with the property that every column
occurs with an even multiplicity.

   Then from Corollary \ref{type-D-som}, we have
\begin{align*}
    \mathcal{O}_{{\rm Ann}(L(\lambda))} &= d_{\rm Som}^{D}(L^{\vee}, \mathcal{O}_{L^\vee})=d_{\text{Som}}(\mathbf{v},\mathbf{h})=(\mathbf{v} \cup (\mathbf{h}^t_D)^t)^t_D\\
    &=(\mathbf{v}^t\csqcup \mathbf{h}^t_D)_D\\
    &=\left(\mathbf{v}^t \csqcup \left((({\bf q}_{1}^t)_D)^t\csqcup (({\bf q}_{2}^t)_D)^t\csqcup (-\mathbf{v}^t)\csqcup(\csqcup_i 2{\bf p}_{i})\right)_D   \right)_D\\
    &=\left(\mathbf{v}^t \csqcup \left(((({\bf q}_{1}^t)_D)^t)_D\csqcup (({\bf q}_{2}^t)_D)^t\csqcup (-\mathbf{v}^t)\csqcup(\csqcup_i 2{\bf p}_{i})\right)_D   \right)_D\\
    & \quad \quad\quad\quad\quad\quad\quad \quad\quad \text{by Lemma \ref{lem:collapse-even-shift}}\\
    &=\left(\mathbf{v}^t \csqcup \left({\bf q}_{1}\csqcup (({\bf q}_{2}^t)_D)^t\csqcup (-\mathbf{v}^t)\csqcup(\csqcup_i 2{\bf p}_{i})\right)_D   \right)_D\\
    & \quad \quad\quad \quad\quad \quad\quad\quad\quad\text{$((({\bf q}_{1}^t)_D)^t)_D={\bf q}_{1}$ since ${\bf q}_{1}$ is a special partition of type $D$}\\
    &=\left(\mathbf{v}^t \csqcup {\bf q}_{1}\csqcup (({\bf q}_{2}^t)_D)^t\csqcup (-\mathbf{v}^t)\csqcup(\csqcup_i 2{\bf p}_{i})   \right)_D\\
    & \quad \quad\quad \quad\quad\quad\quad\quad\quad \text{by Lemma \ref{lem-C}}\\
    &=\left( {\bf q}_{1}\csqcup (({\bf q}_{2}^t)_D)^t\csqcup (\csqcup_i 2{\bf p}_{i})   \right)_D\\
    &=\left(\mathbf{p}_0  \overset{c}{\sqcup}  \mathbf{p}_{\frac{1}{2}}  \overset{c}{\sqcup} \left( \csqcup_i  2{\bf p}_i \right)\right)_D.
\end{align*}

This completes the proof for this case.

\end{enumerate}

\subsection{The revised partition algorithm}

From the symmetry of the extended Dynkin diagrams of types $C$ and $D$, together with the new proof of the partition algorithm  in \S \ref{new-proof}, we can obtain the following result.

\begin{Cor}\label{imp-partition}
  Suppose $\mathfrak{g} = \mathfrak{so}(2n+1, \mathbb{C}), \mathfrak{sp}(n, \mathbb{C})$ or $\mathfrak{so}(2n, \mathbb{C})$, $\lambda\in \mathfrak{h}^*$ and
		$[\lambda]=(\lambda)_{(0)} \cup (\lambda)_{(\frac{1}{2})}\cup [\lambda]_3$
		with $[\lambda]_3=\{{\lambda}_{{Y}_1},\dots,{\lambda}_{{Y}_m}\}$. If $|p((\lambda)_{(0)}^-)|
        \geq |p((\lambda)_{(\frac{1}{2})}^-)|$, let
		\begin{enumerate}
			\item $\mathbf{p}_{0}$ be the $X$-type special partition associated with $(\lambda)_{(0)}$;
			\item ${\mathbf p}_{\frac{1}{2}}$ be the $C$-type special  partition (for type $B$) or $C$-type metaplectic special partition (for types $C$ and $D$) associated with $(\lambda)_{(\frac{1}{2})}$;
			\item $\mathbf{p}_{i}$ be the $A$-type partition associated with $\tilde{\lambda}_{Y_i}$.
		\end{enumerate}
	If $|p((\lambda)_{(0)}^-)|<|p((\lambda)_{(\frac{1}{2})}^-)|$, we can swap the roles of $\mathbf{p}_{0}$ and  ${\mathbf p}_{\frac{1}{2}}$ for types $B$ and $D$. For type $C$, we still use the previous choice.
        
        Let $\mathbf{p}_{\lambda}$ be the $X$-collapse of
 \[
\mathbf{p}_0  \overset{c}{\sqcup}  \mathbf{p}_{\frac{1}{2}}  \overset{c}{\sqcup} \left( \overset{c}{\sqcup_i}  2\mathbf{p}_i \right)
\]
		Then we have
		\[
		V(\Ann (L(\lambda)))=\overline{\mathcal{O}}_{\mathbf{p}_{\lambda}}.
		\]
 When $n$ is even and ${\mathcal{O}}_{\mathbf{p}_{\lambda}}$ is a very even orbit,
 its numeral is determined by the number $k(\lambda)$ of very even orbits with numeral II in the set of very even orbits of type $D$:    $\{\mathcal{O}_{\bfpp_{0}}, \mathcal{O}_ {{\bf p}_{\frac{1}{2}}},  \mathcal{O}_{2\bfpp_i}| 1\leq i\leq m  \}$. 
So $\mathcal{O}_{{\bf p}_{\lambda}}$ will be of type I if $k(\lambda)\equiv 0 
  ~(\mathrm{mod}~ {2})$ and of type II if $k(\lambda)\equiv 1 
  ~(\mathrm{mod}~ {2})$.
\end{Cor}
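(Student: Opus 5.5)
The plan is to derive Corollary \ref{imp-partition} from the same Sommers-duality computations as in \S\ref{new-proof}, now run with the roles of the two type-$X$ factors exchanged. For types $B$ and $D$ we use Corollary \ref{type-b-som-2} and Corollary \ref{type-D-som-2} in place of Corollaries \ref{type-b-som} and \ref{type-D-som}. These are justified by the symmetry of the extended Dynkin diagrams of types $C$ and $D$: the pseudo-Levi factor $\mathfrak{l}_1$ containing the lowest root may be taken to carry either of the two classical blocks $\Phi_{(0)}$ or $\Phi_{(\frac12)}$. Thus the distinguished part $\mathbf{v}$ is extracted from $({\bf q}^\vee_1)^t$ (resp. $({\bf q}_1^t)_D$) rather than from the factor attached to $(\lambda)_{(\frac12)}$. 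Since $d_{\rm Som}$ of the same pair $(L^\vee,\mathcal{O}_{L^\vee})$ is independent of which realization we choose, both computations yield $\mathcal{O}_{\Ann(L(\lambda))}$. The size condition $|p((\lambda)_{(0)}^-)|\ge|p((\lambda)_{(\frac12)}^-)|$ only decides which of the two equivalent formulas is stated; the other case is covered by the swapped computation.

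For type $D$ the swapped computation is literally the computation of \S\ref{new-proof} with indices $1,2$ interchanged, since ${\bf q}_1$ and ${\bf q}_2$ are both special of type $D$. We write $\mathbf{h}^t_D$ as a column union. We use Lemma \ref{lem:collapse-even-shift} to replace $(({\bf q}_2^t)_D)^t$ by its $D$-collapse ${\bf q}_2$. We then use Lemma \ref{lem-C}, applied with $\mathbf{x}=(({\bf q}_1^t)_D)^t\csqcup(-\mathbf{v}^t)\csqcup\cdots$, to absorb the outer $\mathbf{v}^t$. The result is $\bigl({\bf q}_2\csqcup(({\bf q}_1^t)_D)^t\csqcup(\csqcup_i2{\bf p}_i)\bigr)_D$, which is the displayed formula with $\mathbf{p}_0$ and $\mathbf{p}_{\frac12}$ swapped.

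For type $B$ we proceed as in case (2) of the type $B$ proof, now with $\mathbf{v}\subset({\bf q}^\vee_1)^t$. We rewrite $(\mathbf{v}\cup\mathbf{h}^B)^t_B=(\mathbf{v}^t\csqcup(\mathbf{h}^t\csqcup[1])_B)_B$ using Lemma \ref{pm-col}. We place the extra $[1]$ on the ${\bf q}^\vee_2$ factor, so that $({\bf q}^\vee_2\csqcup[1])_B={\bf q}_2$ is special of type $B$. We then apply Lemma \ref{lem:collapse-even-shift} and Lemma \ref{lem:2}, with $\mathbf{s}={\bf q}_2$ and $\mathbf{e}={\bf q}_1^\vee\csqcup(-\mathbf{v}^t)\csqcup(\csqcup_i2{\bf p}_i)$, which has every column of even multiplicity by construction of $\mathbf{v}$. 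This gives $\bigl({\bf q}_2\csqcup{\bf q}^\vee_1\csqcup(\csqcup_i 2{\bf p}_i)\bigr)_B$, that is, the swapped roles.

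For type $C$ nothing changes. The two classical blocks have different types ($C$ and $D$), so there is no symmetry to exploit, and Corollary \ref{type-c-som} with the proof in \S\ref{new-proof} applies verbatim. The numeral statement in the very even case is inherited unchanged from Theorem \ref{pa-suanfa}, since the set $\{\mathcal{O}_{\bfpp_0},\mathcal{O}_{{\bf p}_{\frac12}},\mathcal{O}_{2\bfpp_i}\}$ is symmetric under the swap.

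The main obstacle is verifying the hypotheses of Lemma \ref{lem:2} after the swap. In the type $B$ computation the special summand $\mathbf{s}$ must have odd size, so the correction $[1]$ has to be attached to the block that becomes $\mathbf{p}_{\frac12}$. We must also check that the resulting block is special of type $B$, i.e. that $(\mathbf{q}^\vee_2\csqcup[1])_B=({\bf q}_2^\vee)^B$, using (\ref{lam-B}) together with the remark that $({\bf q}^\vee_i)^B={\bf q}_i$. We must also confirm that the swapped $\mathbf{v}$, consisting of even parts of $({\bf q}^\vee_1)^t$, produces a $\mathbf{v}^t$ whose columns are even and distinct. That is exactly the hypothesis on $\mathbf{b}$ in Lemma \ref{lem:2}.
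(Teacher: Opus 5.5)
Your proposal is correct and follows the paper's own route. The paper obtains Corollary~\ref{imp-partition} from the symmetry of the extended Dynkin diagrams of types $C$ and $D$ (Corollaries~\ref{type-b-som-2} and~\ref{type-D-som-2}), rerunning the computations of \S\ref{new-proof} with the two classical blocks interchanged and leaving type $C$ unchanged; you spell out the swapped uses of Lemmas~\ref{lem:collapse-even-shift}, \ref{lem:2} and~\ref{lem-C}, which the paper leaves implicit (note that $({\bf q}^\vee_2\csqcup[1])_B={\bf q}_2$ holds directly by the definition ${\bf p}^B=({\bf p}^+)_B$, so (\ref{lam-B}) is not needed there).
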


\begin{ex}\label{d-ex}
    Let $\mathfrak{g}=\mathfrak{so}(27, \mathbb{C})$.
		Suppose $$\lambda=(2.5,3.5,-4.5,-6.5,-8.5,-10.5,4,2,3,1.1,3.1,4.1,0.9).$$ Then we have $$\lambda_{(0)}=(4,2,3),$$ 
        $$\lambda_{(\frac{1}{2})}=(2.5,3.5,-4.5,-6.5,-8.5,-10.5)$$
        and $$\lambda_{Y_1}=(1.1,3.1,4.1,0.9).$$

For $\lambda_{(0)}$, from the Young tableau $P(\lambda_{(0)}^-)$,  we can obtain 
        $p(\lambda_{(0)}^-)=[2,2,1,1]$, which yields a special partition of type $B$: ${\bf p}_0=[3,1^4]$.

For $\lambda_{(\frac{1}{2})}$, from the Young tableau $P(\lambda_{(\frac{1}{2})}^-)$,  we can obtain 
        $p(\lambda_{(\frac{1}{2})}^-)=[3,3,2^3]$, which yields a special partition of type $C$: ${\bf p}_{\frac{1}{2}}=[3,3,2^3]$.

		For $\lambda_{Y_1}$, by using the RS algorithm for $\tilde{\lambda}_{Y_1}$,  we can obtain: ${\bf p}_1=[3,1]$.

From Theorem \ref{pa-suanfa}, we have $V(\Ann (L(\lambda)))=\overline{\mathcal{O}}_{[13,7,2,2,1,1]}$ since
    $${\bf p}_{\lambda}=(\mathbf{p}_0  \overset{c}{\sqcup}  \mathbf{p}_{\frac{1}{2}}  \overset{c}{\sqcup}   2\mathbf{p}_1 )_B=([3,1^4]\csqcup [3,3,2^3]\csqcup 2[3,1])_B=[12,6,3^3]_B=[11,7,3^3].$$
        
Now we swap the roles of $\mathbf{p}_0$  and  $\mathbf{p}_{\frac{1}{2}}$. 		From $p(\lambda_{(\frac{1}{2})}^-)=[3,3,2^3]$, we can obtain a special partition of type $B$: ${\bf p}_0=[3^3,2,2]$.
From $p(\lambda_{0}^-)=[2,2,1,1]$, we can obtain a special partition of type $C$:  ${\bf p}_{\frac{1}{2}}=[2,2,1,1]$.
We still have ${\bf p}_1=[3,1]$.
		 Then from Corollary \ref{imp-partition}, 
    we still have $V(\Ann (L(\lambda)))=\overline{\mathcal{O}}_{[13,7,2,2,1,1]}$ since
    $${\bf p}_{\lambda}=(\mathbf{p}_0  \overset{c}{\sqcup}  \mathbf{p}_{\frac{1}{2}}  \overset{c}{\sqcup}   2\mathbf{p}_1 )_B=([3^3,2,2]\csqcup [2,2,1,1]\csqcup 2[3,1])_B=[11,7,4,3,2]_B=[11,7,3,3,3]. $$
    \end{ex}

\begin{ex}
       Let $\mathfrak{g}=\mathfrak{so}(26, \mathbb{C})$.
		Suppose $$\lambda=(1.5,3.5,-4.5,-6.5,-8.5,-10.5,1,2,3,1.1,3.1,4.1,0.9).$$ Then we have $$\lambda_{(0)}=(1,2,3),$$ 
        $$\lambda_{(\frac{1}{2})}=(1.5,3.5,-4.5,-6.5,-8.5,-10.5)$$
        and $$\lambda_{Y_1}=(1.1,3.1,4.1,0.9).$$

For $\lambda_{(0)}$, from the Young tableau $P(\lambda_{(0)}^-)$,  we can obtain 
        $p(\lambda_{(0)}^-)=[3,3]$, which yields a special partition of type $D$: ${\bf p}_0=[3,3]$.

For $\lambda_{(\frac{1}{2})}$, from the Young tableau $P(\lambda_{(\frac{1}{2})}^-)$,  we can obtain 
        $p(\lambda_{(\frac{1}{2})}^-)=[3,3,2^3]$, which yields a special partition of type $D$: ${\bf q}_2=[3,3,2,2,1,1]$, and a  $C$-type metaplectic special partition: ${\bf p}_{\frac{1}{2}}=(({\bf q}_2^t)_D)^t=(([3,3,2,2,1,1]^t)_D)^t=[4,2^4]$.

		For $\lambda_{Y_1}$, by using the RS algorithm for $\tilde{\lambda}_{Y_1}$,  we can obtain: ${\bf p}_1=[3,1]$.

From Theorem \ref{pa-suanfa}, 
    we have $V(\Ann (L(\lambda)))=\overline{\mathcal{O}}_{[13,7,2,2,1,1]}$ since
    $${\bf p}_{\lambda}=(\mathbf{p}_0  \overset{c}{\sqcup}  \mathbf{p}_{\frac{1}{2}}  \overset{c}{\sqcup}   2\mathbf{p}_1 )_D=([3,3]\csqcup [4,2^4]\csqcup 2[3,1])_D=[13,7,2,2,2]_D=[13,7,2,2,1,1]. $$
        
Now we swap the roles of $\mathbf{p}_0$  and  $\mathbf{p}_{\frac{1}{2}}$. 		From
       $p(\lambda_{(\frac{1}{2})}^-)=[3,3,2^3]$, we can obtain a special partition of type $D$: ${\bf p}_0=[3,3,2,2,1,1]$.
From 
        $p(\lambda_{0}^-)=[3,3]$, we can obtain a special partition of type $D$: ${\bf q}_2=[3,3]$, and a $C$-type metaplectic special partition: ${\bf p}_{\frac{1}{2}}=(({\bf q}_2^t)_D)^t=(([3,3]^t)_D)^t=[4,2]$.
We still have ${\bf p}_1=[3,1]$.
		 Then from Corollary \ref{imp-partition}, 
    we still have $V(\Ann (L(\lambda)))=\overline{\mathcal{O}}_{[13,7,2,2,1,1]}$ since
    $${\bf p}_{\lambda}=(\mathbf{p}_0  \overset{c}{\sqcup}  \mathbf{p}_{\frac{1}{2}}  \overset{c}{\sqcup}   2\mathbf{p}_1 )_D=([3,3,2,2,1,1]\csqcup [4,2]\csqcup 2[3,1])_D=[13,7,2,2,1,1]_D=[13,7,2,2,1,1]. $$
    \end{ex}

\begin{Rem}
   The algorithm in Corollary \ref{imp-partition} is called \emph{the revised partition algorithm}. For types $B$ and $C$, we can still use the original partition algorithm in Theorem \ref{recipe}. But for type $D$, we may use the revised partition algorithm when $|p((\lambda)_{(0)}^-)|<|p((\lambda)_{(\frac{1}{2})}^-)|$. 
\end{Rem}

\subsection*{Acknowledgments} 
Y. Luan is supported by Shandong Provincial Natural Science Foundation (Grant No. ZR2025MS36), Guangdong Basic and Applied Basic Research Foundation (Grant No. 2023A1515110189). The authors thank Fan Gao  for very helpful discussions about the Sommers duality.

\bibliographystyle{plain}
\bibliography{BJL}
\end{document}